\theoremstyle{plain}
\newtheorem{theorem}{Theorem}[section]
\newtheorem{corollary}{Corollary}[theorem]
\newtheorem{lemma}[theorem]{Lemma}
\theoremstyle{definition}
\newtheorem{definition}[theorem]{Definition}
\newtheorem{prop}[theorem]{Proposition}
\newtheorem{conjecture}[theorem]{Conjecture}
\newtheorem*{conjecture*}{Conjecture}
\theoremstyle{remark}
\newtheorem*{remark}{Remark}
\title{The Hilbert Series of the Irreducible Quotient of the Polynomial Representation of the Rational Cherednik Algebra of Type $A_{n-1}$ in Characteristic $p$ for $p|n-1$}
\author{Merrick Cai, Daniil Kalinov}
\date{March 2021}
\begin{document}

\maketitle
\begin{abstract}
	We study the irreducible quotient $\mathcal{L}_{t,c}$ of the polynomial representation of the rational Cherednik algebra $\mathcal{H}_{t,c}(S_n,\mathfrak{h})$ of type $A_{n-1}$ over an algebraically closed field of positive characteristic $p$ where $p|n-1$. In the $t=0$ case, for all $c\ne 0$ we give a complete description of the polynomials in the maximal proper graded submodule $\ker \mathcal{B}$, the kernel of the contravariant form $\mathcal{B}$, and subsequently find the Hilbert series of the irreducible quotient $\mathcal{L}_{0,c}$. In the $t=1$ case, we give a complete description of the polynomials in $\ker \mathcal{B}$ when the characteristic $p=2$ and $c$ is transcendental over $\mathbb{F}_2$, and compute the Hilbert series of the irreducible quotient $\mathcal{L}_{1,c}$. In doing so, we prove a conjecture due to Etingof and Rains completely for $p=2$, and also for any $t=0$ and $n\equiv 1\pmod{p}$. Furthermore, for $t=1$, we prove a simple criterion to determine whether a given polynomial $f$ lies in $\ker \mathcal{B}$ for all $n=kp+r$ with $r$ and $p$ fixed.
\end{abstract}
\tableofcontents
\section{Introduction}

The main object of our study in the current paper is the rational Cherednik algebra of type $A_{n-1}$, which we will denote by $\mathcal{H}_{t,c}(S_{n},\mathfrak{h})$ or simply by $\mathcal{H}_{t,c}(n)$. The Cherednik algebras, also known as Double Affine Hecke Algebras (DAHA), were introduced by Cherednik in \cite{cherednik1993macdonald} as a tool in his proof of Macdonald’s conjectures about orthogonal polynomials for root systems. Since then Cherednik algebras have appeared in many different mathematical contexts and showed their independent significance. In particular, they are directly linked with exactly solvable models in physics, especially quantum Calogero-Moser systems (see \cite{etingof2007calogero}), and quantum KZ equations (see \cite{cherednik1992kz}). In \cite{cherednik2005double}, Cherednik gives a more thorough exposition of the applications of DAHA in various mathematical areas, such as harmonic analysis, topology, elliptic curve theory, Verlinde algebras, Kac-Moody algebras, and more. Another good source on general theory of Cherednik algebras is \cite{etingof2010lecture}.

Representation theory of rational Cherednik algebras over the fields of characteristic zero was well studied, particularly in \cite{gordon2003baby} (in which the Hilbert series of irreducible representations is computed as well).

It is a topic of current research to understand the behaviour of irreducible representations of Cherednik algebras in positive characteristic (for example see \cite{balagovic2013representations}, \cite{devadas2016polynomial}, \cite{devadas2014representations}). Our work can be seen as a follow up on the article \cite{devadas2016polynomial}. In a similar fashion we restrict ourselves from the general rational Cherednik algebra $\mathcal{H}_{t,c}(\mathfrak{h},G)$, to the case where $G = S_n$, $\mathfrak{h}$ is a reflection representation of $S_n$ and $c$ is generic, but we also consider the case $t=0$. In their paper Devadas and Sun have proven the formula for the Hilbert polynomial of the irreducible quotient of the polynomial representation $\mathcal{L}_{t,c}(\text{triv})$ for $p | n$. In our paper we work on the next case $n = kp+1$. In this case we prove the formula for the Hilbert polynomial of $L_{t,c}(\text{triv})$ for any pair $(p,n)$ in the case $t=0$ and for $p=2$ in the case $t=1$ and generic $c$. We also present Conjecture~\ref{conj} due to Etingof and Rains for the Hilbert series in the general case $n = kp +r$, which holds for all of the cases that we, Devadas, and Sun have studied.
\begin{conjecture*}[Etingof, Rains]
Let $n=kp+r$, $0\le r<p$, $[k]_z=\frac{1-z^k}{1-z}$, $[k]_z!=[k]_z[k-1]_z\dotsm [1]_z$, $Q_r(n,z)=\binom{n-1}{r-1}z^{r+1}+\sum_{i=0}^{r}\binom{n-r-2+i}{i}z^i$, and $c$ be generic. The Hilbert series for $\mathcal{L}_{t,c}$ is of the form $$h_{\mathcal{L}_{0,c}}(z)=[r]_z![p]_zQ_r(n,z)\hspace{3mm}\text{and}\hspace{3mm} h_{\mathcal{L}_{1,c}}(z)=[p]_z^{n-1}[r]_{z^p}![p]_{z^p}!Q_r\left(n,z^p\right).$$
\end{conjecture*}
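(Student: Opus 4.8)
The plan is to prove the two Hilbert-series formulas in tandem, first reducing the $t=1$ statement to the $t=0$ statement by a Frobenius argument and then establishing the $t=0$ formula for every residue $r$ by exhibiting an explicit generating set for $\ker\mathcal{B}$. The reduction rests on an identity between the two conjectured answers: writing $g(z)=[r]_z![p]_zQ_r(n,z)$ for the $t=0$ series, one checks that the $t=1$ series equals $[p]_z^{\,n-1}\,[p-1]_{z^p}!\;g(z^p)$, since $[r]_{z^p}![p]_{z^p}!Q_r(n,z^p)=[p-1]_{z^p}!\,g(z^p)$. Here $[p]_z^{\,n-1}$ is the Hilbert series of the restricted polynomial ring $\mathbb{F}[\mathfrak{h}]/(x_i^{p})$ (recall $\dim\mathfrak{h}=n-1$), the substitution $z\mapsto z^p$ is a Frobenius twist, and the single factor $[p]_z$ of the $t=0$ answer is promoted to the full $S_p$-coinvariant factor $[p]_{z^p}!$. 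This exact bookkeeping is what the $t=1$ half of the proof must realize representation-theoretically.

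For the $t=0$ formula at general $r$, I would first decompose $\mathbb{F}[\mathfrak{h}]$ into $S_n$-isotypic components; since $\mathcal{B}$ is $S_n$-invariant, $\ker\mathcal{B}$ is a graded $S_n$-submodule that can be analyzed one component at a time. The shape $[r]_z![p]_zQ_r(n,z)$ is the guiding heuristic: $[r]_z!$ is the Hilbert series of the $S_r$-coinvariant algebra, $[p]_z$ that of one truncated variable, and $Q_r(n,z)$ a low-degree correction carrying all of the $n$-dependence through its binomial coefficients. This predicts that $\mathcal{L}_{0,c}$ retains a residual parabolic $S_r$-symmetry that is invisible when $r=1$, which is precisely why the $r=1$ arguments do not extend verbatim. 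Guided by this, I would write candidate singular polynomials built from the power sums $p_j=\sum_i x_i^{\,j}$ and their $p$-th-power Frobenius analogues, with the degree offsets dictated by $r$, and verify directly that each is annihilated by every $t=0$ Dunkl operator $y_i$ and hence lies in $\ker\mathcal{B}$. I would organize this verification through a criterion for membership in $\ker\mathcal{B}$ uniform in $k$ (with $r,p$ fixed), so that the $k$-dependence enters only through the binomial coefficients and the whole family $n=kp+r$ is treated at once.

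The harder half is completeness: showing these candidates generate \emph{all} of $\ker\mathcal{B}$. I would bound $h_{\mathcal{L}_{0,c}}(z)$ from both sides. Let $J$ be the submodule they generate; computing the Hilbert series of $\mathbb{F}[\mathfrak{h}]/J$ and showing it equals $[r]_z![p]_zQ_r(n,z)$ gives, since $J\subseteq\ker\mathcal{B}$, the coefficientwise upper bound $h_{\mathcal{L}_{0,c}}(z)\le[r]_z![p]_zQ_r(n,z)$. For the matching lower bound I would use that $\dim(\ker\mathcal{B})_m$ is upper-semicontinuous in $c$, hence generically minimal, and prove this minimum equals $\dim J_m$ via a genericity argument on the Gram determinant of $\mathcal{B}$: in each degree $m$ this determinant is a nonzero polynomial in $c$ whose order of vanishing --- forced by the characteristic-$p$ combinatorics of the component multiplicities --- must account for exactly the predicted codimension, ruling out any ``accidental'' singular polynomials for generic $c$ and giving $J=\ker\mathcal{B}$. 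Finally, for $t=1$ I would promote this to the center: in characteristic $p$ the $p$-th powers $x_i^{\,p}$ and $y_i^{\,p}$ span central subalgebras, so the triangular decomposition equips $\mathcal{L}_{1,c}$ with a filtration whose associated graded is the tensor product of $\mathbb{F}[\mathfrak{h}]/(x_i^{p})$ with a Frobenius twist of the $t=0$ module, the extra $[p-1]_{z^p}!$ arising as the $S_p$-coinvariant enhancement of the single $[p]_z$ factor; multiplying the Hilbert series reproduces $[p]_z^{\,n-1}[r]_{z^p}![p]_{z^p}!Q_r(n,z^p)$.

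I expect the main obstacle to be the completeness step for general $r$. Unlike $r=1$, where each relevant isotypic component is essentially one-dimensional in the degrees that matter, general $r$ requires controlling an entire $S_r$-parabolic coinvariant structure inside $\mathbb{F}[\mathfrak{h}]$ and proving a sharp, $c$-uniform count of singular vectors across all components simultaneously. The genericity/semicontinuity argument that forbids accidental singular polynomials for generic $c$ is the crux; once it is in place, the Frobenius reduction transports the result to $t=1$ with only the identification of the $[p-1]_{z^p}!$ factor left to verify.
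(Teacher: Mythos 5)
You should first note what this statement is in the paper: it is stated as an \emph{open conjecture} of Etingof--Rains, and the paper proves it only in the $r=1$ cases (Theorem \ref{th2.34} for $t=0$ and all $p$; Theorem \ref{th3.15} for $t=1$, $p=2$), so there is no proof in the paper to match yours against. Judged on its own terms, your text is a research program rather than a proof: both load-bearing steps are left as intentions. You never exhibit the candidate singular polynomials for general $r$ (only ``polynomials built from power sums\dots with degree offsets dictated by $r$''), you never compute the Hilbert series of $\Bbbk[\mathfrak{h}]/J$ for the submodule $J$ they generate, and you explicitly defer the completeness step, which you correctly identify as the crux. Nothing in the proposal actually establishes either the upper or the lower bound on $h_{\mathcal{L}_{0,c}}$ for any $r>1$.

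Moreover, the two shortcuts you invoke would fail as stated. For $t=0$, genericity/semicontinuity in $c$ is vacuous: since $\mathcal{H}_{0,c}\cong\mathcal{H}_{0,ac}$ for $a\in\Bbbk^{\times}$ (the paper notes that at $t=0$ only $c\neq 0$ versus $c=0$ matters), the graded dimensions of $\ker\mathcal{B}$ are identical for \emph{every} nonzero $c$; indeed each degree-$m$ Dunkl operator scales by $c$, so the degree-$m$ Gram matrix is $c^m$ times a $c$-independent matrix and its determinant is a monomial in $c$ up to scalar. Its ``order of vanishing in $c$'' therefore carries no information about the characteristic-$p$ codimension and cannot rule out accidental singular vectors --- which is exactly why the paper's $r=1$ argument proceeds by explicit degree-by-degree Dunkl computations instead. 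For $t=1$, your Frobenius reduction asserts that the associated graded of $\mathcal{L}_{1,c}$ is $\Bbbk[\mathfrak{h}]/(x_i^p)$ tensored with a Frobenius twist of the $t=0$ module, but the known structural result (Proposition \ref{p1.6}, from Balagovic--Chen) only says $h_{\mathcal{L}_{1,c}}(z)=[p]_z^{n-1}h(z^p)$ for some \emph{unidentified} $h$, and the conjecture itself forces $h(z)=[p-1]_z!\,h_{\mathcal{L}_{0,c}}(z)$, i.e.\ $h$ is not the $t=0$ series. The extra factor $[p-1]_{z^p}!$ is precisely the content your reduction would have to produce; calling it ``the $S_p$-coinvariant enhancement'' restates the target without supplying a mechanism, and the paper's own $t=1$, $p=2$ proof (Theorems \ref{th3.9}--\ref{th3.15}) gives no such tensor-product decomposition but instead a bespoke criterion plus computer verification. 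So the proposal has genuine gaps at the construction step, the completeness step, and both transport arguments.
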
 Note that $h_{\mathcal{L}_{1,c}}(z)=[p]_z^{n-1}h_0\left(z^p\right)$, which is discussed in \cite{balagovic2013representations}.

In Section 1, we give an overview of the background, terminology, and past results in the representation theory of rational Cherednik algebras, particularly those which are relevant for the case that we work with. In Section 2, we prove Theorem~\ref{th2.34} which solves the case $t=0$ and $p|n-1$. In Section 3, we prove Theorem~\ref{th3.9} which introduces a simple criterion to determine whether a polynomial is in the maximum graded submodule of the polynomial representation (later defined as $\ker\mathcal{B}$), and then prove Theorem~\ref{th3.15} which solves the case $t=1$ over a field of characteristic $2$ and $n$ odd.

\subsection{Preliminaries}

We will adopt notation from \cite{balagovic2013representations}.

Fix an algebraically closed field $\Bbbk$ of characteristic $p$ for some prime $p$, and fix a positive integer $n>1$. Fix $t,c\in \Bbbk$. Let $S_n$ be the symmetric group on $n$ elements, and $\sigma_{ij}$ be the transposition swapping $i$ and $j$. Consider the $n$-dimensional permutation representation of $S_n$, a vector space $V$ spanned by $y_1,y_2,\dots ,y_n$ over $\Bbbk$, and its dual space $V^*$ with dual basis $x_1,x_2,\dots ,x_n$. Then consider the subrepresentation $\mathfrak{h}=\text{Span}\{y_{i}-y_{j}|i,j\in [n]\}$ over $\Bbbk$ and its dual $\mathfrak{h}^*=V^*/(x_1+x_2+\dots +x_n)$. Denote by $T(\mathfrak{h}\oplus \mathfrak{h}^*)$ the tensor algebra of $\mathfrak{h}\oplus \mathfrak{h}^*$.

\begin{definition}
\label{df1.1}
The \textit{rational Cherednik algebra of type $A_{n-1}$}, or $\mathcal{H}_{t,c}(S_n,\mathfrak{h})$, is the quotient of $\Bbbk S_n\ltimes T(\mathfrak{h}\oplus \mathfrak{h}^*)$ by the relations 
\begin{itemize}
    \item $[x_i,x_j]=0$,
    \item $[y_{i}-y_{j},y_{\ell}-y_{k}]=0$,
    \item $[y_{i}-y_{j},x_i]=t-c\sigma_{ij}-c\sum_{k\ne i}\sigma_{ik}$,
    \item $[y_{i}-y_{j},x_k]=c\sigma_{ik}-c\sigma_{jk}$ for $k\ne i,j$.
\end{itemize}
\end{definition}
\begin{remark}
One can also work with $V$ and $V^*$ instead of $\mathfrak{h}$ and $\mathfrak{h}^*$, to define $\mathcal{H}_{t,c}(S_n,V)$. For $p\nmid n$, the Hilbert series of $\mathcal{L}_{t,c}$ (defined in Definition~\ref{d1.11}) are related via
\begin{align*}
h_{\mathcal{L}_{0,c}(S_n,V)}(z)&=h_{\mathcal{L}_{0,c}(S_n,\mathfrak{h})}(z),\\
h_{\mathcal{L}_{1,c}(S_n,V)}(z)&=\left(1+z+\dots +z^{p-1}\right)h_{\mathcal{L}_{1,c}(S_n,\mathfrak{h})}(z).\\
\end{align*}
\end{remark}
Consider $S\mathfrak{h}$, the symmetric algebra of $\mathfrak{h}$, which we can think about as the subalgebra in the algebra of polynomials in $y_i$, generated by the differences $y_i-y_j$ for distinct $i,j$. Consider also $S\mathfrak{h}^{*}$ the symmetric algebra of $\mathfrak{h}^*$, which we can think about as the algebra of polynomials in $x_i$ modulo the relation $(x_1+\dots +x_n)$; i.e., $S\mathfrak{h}^{*}\cong \Bbbk[x_1,\dots ,x_n]/(x_1+\dots +x_n)$.

In \cite{devadas2016polynomial}, the PBW theorem is stated for $\mathcal{H}_{t,c}(S_n,\mathfrak{h})$.

\begin{theorem}[PBW\footnote{PBW stands for Poincare-Birkhoff-Witt, and this case (for Cherednik algebras) is a generalization of the famous theorem for Lie algebras.}]
\label{th1.1}
We have the decomposition $\mathcal{H}_{t,c}(S_n,\mathfrak{h})\simeq S\mathfrak{h}\otimes_{\Bbbk}\Bbbk[S_n]\otimes_{\Bbbk}S\mathfrak{h}^*$ as vector spaces.
\end{theorem}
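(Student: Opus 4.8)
The plan is to prove that the multiplication map
$$\mu:\ S\mathfrak{h}\otimes_{\Bbbk}\Bbbk[S_n]\otimes_{\Bbbk}S\mathfrak{h}^*\ \longrightarrow\ \mathcal{H}_{t,c}(S_n,\mathfrak{h}),\qquad u\otimes g\otimes v\mapsto u\,g\,v,$$
is an isomorphism of $\Bbbk$-vector spaces. Surjectivity is the easy half, so the real content is injectivity, i.e.\ that no collapsing occurs among the ordered monomials. To organize the argument I would filter $\mathcal{H}_{t,c}$ by assigning degree $1$ to each generator of $\mathfrak{h}$ and of $\mathfrak{h}^*$ and degree $0$ to each $g\in S_n$. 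Under this filtration the three purely quadratic relations survive, while in the cross relations the correction terms $t-c\sigma_{ij}-c\sum_{k\ne i}\sigma_{ik}$ and $c\sigma_{ik}-c\sigma_{jk}$ are pushed into strictly lower filtration degree, so that the degree-$2$ part of each cross relation is simply $[y_i-y_j,x_k]=0$. Hence $\mathrm{gr}\,\mathcal{H}_{t,c}$ is a quotient of the undeformed algebra $\Bbbk S_n\ltimes \Bbbk[\mathfrak{h}\oplus\mathfrak{h}^*]$, for which the PBW statement is immediate since it is the semidirect product of a commutative polynomial algebra with a finite group algebra. It therefore suffices to show that the resulting surjection $\Bbbk S_n\ltimes\Bbbk[\mathfrak{h}\oplus\mathfrak{h}^*]\twoheadrightarrow\mathrm{gr}\,\mathcal{H}_{t,c}$ is an isomorphism, which is equivalent to the injectivity of $\mu$.

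For surjectivity I would argue by induction on word length that any product of generators can be rewritten as a $\Bbbk$-linear combination of normal-form monomials $\mathbf{y}^\alpha\, g\, \mathbf{x}^\beta$. The relation $[x_i,x_j]=0$ sorts the $x$'s, $[y_i-y_j,y_\ell-y_k]=0$ sorts the $y$'s, conjugation moves every $g\in S_n$ into the middle, and each application of a cross relation either commutes a $y$ past an $x$ or replaces the pair by an element of $\Bbbk S_n$ of strictly smaller $(\mathfrak{h}\oplus\mathfrak{h}^*)$-degree; the latter terms are absorbed by the inductive hypothesis. This shows $\mu$ is onto.

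For injectivity I would invoke the deformation-theoretic criterion for the PBW property of inhomogeneous quadratic algebras, in the form used by Etingof and Ginzburg for symplectic reflection algebras, of which $\mathcal{H}_{t,c}(S_n,\mathfrak{h})$ is the type $A$ instance. The only nontrivial hypothesis to verify is the Jacobi condition on the deformation datum: for every triple of generators one must check
$$\bigl[[A,B],C\bigr]+\bigl[[B,C],A\bigr]+\bigl[[C,A],B\bigr]=0$$
modulo the relations. The substantive cases are those with two $x$'s or two $y$'s; since each cross commutator lies in $\Bbbk S_n$, they reduce to identities in the group algebra comparing, for instance, $[[y_i-y_j,x_k],x_\ell]$ with $[[y_i-y_j,x_\ell],x_k]$, and $[[y_i-y_j,x_k],y_a-y_b]$ with its partner obtained by exchanging the two $y$-type arguments. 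I expect this verification to be the main obstacle, and I would carry it out by direct bookkeeping of how the transpositions $\sigma_{ab}$ commute past $x_k$ and $y_a-y_b$. As a concrete realization and independent check, one can let $S\mathfrak{h}^*$ act on $\Bbbk[\mathfrak{h}]$ by multiplication, $S_n$ by permutations, and each $y_i-y_j$ by the associated Dunkl operator, and verify directly that these operators satisfy the defining relations; in characteristic zero the resulting Dunkl embedding is faithful and gives injectivity outright.

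Finally, I would note that the whole argument is characteristic-free and uniform in $t,c$: the reduction rules and the Jacobi identities are polynomial identities in $\Bbbk S_n\ltimes T(\mathfrak{h}\oplus\mathfrak{h}^*)$ whose structure constants are $t$, $c$, and integers coming from the $S_n$-action, and the apparent denominators $x_i-x_k$ in the Dunkl operators cancel before any reduction modulo $p$ is needed. Hence the decomposition $\mathcal{H}_{t,c}(S_n,\mathfrak{h})\simeq S\mathfrak{h}\otimes_{\Bbbk}\Bbbk[S_n]\otimes_{\Bbbk}S\mathfrak{h}^*$ holds over our field $\Bbbk$ of characteristic $p$ for all values of $t$ and $c$.
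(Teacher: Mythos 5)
The paper itself does not prove Theorem \ref{th1.1}: it is quoted from the literature (the text points to \cite{devadas2016polynomial}, which in turn rests on the Etingof--Ginzburg PBW theorem for symplectic reflection algebras). So the only question is whether your argument would actually establish the theorem in the setting the paper needs, namely an algebraically closed field of characteristic $p$ with $p\mid n-1$, and there it has a genuine gap.

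Your surjectivity argument (rewriting by induction on word length) is fine and characteristic-free; the gap is in injectivity, and both routes you offer are characteristic-zero arguments. The Braverman--Gaitsgory/Etingof--Ginzburg deformation criterion is proved via the Koszul deformation principle over the base ring $\Bbbk[S_n]$, and that machinery requires $\Bbbk[S_n]$ to be semisimple. Here $p\le n-1<n$, so $p\mid n!$, Maschke's theorem fails, and $\Bbbk[S_n]$ is not semisimple. Checking the Jacobi conditions, as you propose, does not help: those are integer identities and do survive reduction mod $p$, but what is unavailable in the modular case is not the hypotheses of the criterion, it is the theorem asserting that those hypotheses imply the PBW property (extending this to the modular case is the content of much later work of Shepler and Witherspoon, by entirely different methods, essentially the Diamond Lemma). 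Your fallback, faithfulness of the Dunkl representation, you yourself restrict to characteristic zero, and it genuinely fails in characteristic $p$: already for $t=1$, $c=0$ the algebra is $\Bbbk S_n$ smashed with the Weyl algebra of $\mathfrak{h}\oplus\mathfrak{h}^*$, and for $y\in\mathfrak{h}$ the element $y^p$ is nonzero in the algebra but acts on polynomials (even on the localization $\Bbbk[\mathfrak{h}^{\mathrm{reg}}]$) by $\partial_y^{\,p}=0$; so the polynomial representation has a large kernel and cannot detect linear independence of the ordered monomials. Consequently your closing claim that ``the whole argument is characteristic-free'' is unjustified. A correct way to finish along your lines is base change: prove PBW over $\mathbb{Q}(t,c)$ (where your arguments do apply); note that by your rewriting step the ordered monomials span the $\mathbb{Z}[t,c]$-form of the algebra, and any $\mathbb{Z}[t,c]$-linear relation among them maps to a relation over $\mathbb{Q}(t,c)$ and hence vanishes, so the $\mathbb{Z}[t,c]$-form is free on these monomials; since the defining presentation commutes with base change, tensoring with $\Bbbk$ and specializing $t,c$ gives the theorem in every characteristic.
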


We can introduce a $\mathbb{Z}$ grading on $\mathcal{H}_{t,c}$ by setting $\deg y=-1$ for $y\in\mathfrak{h}$, $\deg x=1$ for $x\in\mathfrak{h}^*$, and $\deg \sigma=0$ for $\sigma\in S_n$.

Since $\mathcal{H}_{t,c}(S_n,\mathfrak{h})\cong \mathcal{H}_{at,ac}(S_n,\mathfrak{h})$ for any $a\in \Bbbk^\times$, it suffices to study the cases $t=0$ and $t=1$.
\begin{definition}
\label{df1.2}
For parameters $t,c$, the \textit{Dunkl operator} is defined as $$D_{y_i}=t\partial_{x_i}-c\sum_{k\ne i}(x_i-x_k)^{-1}(1-\sigma_{ik})\in \text{End} (S\mathfrak{h}^*).$$
\end{definition}
\begin{remark}
Define $D_{y_{i}-y_{j}}=D_{y_i}-D_{y_j}$. This uniquely extends to a homomorphism $S\mathfrak{h}\to \text{End}(S\mathfrak{h}^*)$, since the $D_{y_i}$ commute.
\end{remark}
7

Define a structure of an $\mathcal{H}_{t,c}$-representation on $S\mathfrak{h}^*$ by sending $y_{i}-y_{j}\mapsto D_{y_{i}-y_{j}}$, $\sigma\mapsto\sigma$ (with the natural action on $S \mathfrak{h}^*$), and $x_i\mapsto x_i$ (acting by multiplication). The Dunkl operators satisfy the same commutator relations given in Definition~\ref{df1.1}, which means that this is indeed a representation (see \cite{etingof2010lecture}, Proposition 2.14 and Theorem 2.15).

\subsection{Verma Modules}
There is another way to define a polynomial representation of $\mathcal{H}_{t,c}$, and that is via Verma modules.

Consider the trivial representation $\Bbbk$ of $\Bbbk S_n\ltimes S\mathfrak{h}$; $S_n$ acts by $1$ and $y_{i}-y_{j}$ acts by $0$.

\begin{definition}
\label{df1.3}
The \textit{Verma module} is the induced $\mathcal{H}_{t,c}(S_n,\mathfrak{h})$-module $$\mathcal{M}_{t,c}(S_n,\mathfrak{h},\Bbbk)=\mathcal{H}_{t,c}(S_n,\mathfrak{h})\otimes_{\Bbbk S_n\ltimes S\mathfrak{h}}\Bbbk.$$ We will refer to it as $\mathcal{M}_{t,c}$.
\end{definition}

\begin{prop}
\label{p1.2}
The Verma module $\mathcal{M}_{t,c}$ is isomorphic to $S\mathfrak{h}^{*}$ as vector spaces.
\end{prop}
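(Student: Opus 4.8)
\section*{Proof proposal}

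The statement is essentially a repackaging of the PBW theorem, and the plan is to exhibit $S\mathfrak{h}^{*}$ as the image of the cyclic generator of the induced module. Write $B=\Bbbk S_n\ltimes S\mathfrak{h}$ for the subalgebra from which we induce, and let $\varepsilon\colon B\to\Bbbk$ be the algebra homomorphism defining the trivial module $\Bbbk$, so $\varepsilon(\sigma)=1$ for $\sigma\in S_n$ and $\varepsilon$ annihilates the positive-degree part $\mathfrak{h}\cdot S\mathfrak{h}$; write $B_{+}=\ker\varepsilon$ for its augmentation ideal, a two-sided ideal of $B$. By definition of the induced module, $\mathcal{M}_{t,c}=\mathcal{H}_{t,c}\otimes_{B}\Bbbk$ is the quotient left $\mathcal{H}_{t,c}$-module $\mathcal{H}_{t,c}/\mathcal{H}_{t,c}B_{+}$.

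First I would record the variant of PBW in which the symmetric algebra $S\mathfrak{h}^{*}$ is placed on the left: ordering the generators as ($x$'s) $<$ ($S_n$) $<$ ($y$'s) yields, by the same argument as Theorem \ref{th1.1}, a vector-space decomposition $\mathcal{H}_{t,c}\cong S\mathfrak{h}^{*}\otimes_{\Bbbk}\Bbbk[S_n]\otimes_{\Bbbk}S\mathfrak{h}=S\mathfrak{h}^{*}\otimes_{\Bbbk}B$. The purpose of this ordering is that it exhibits $\mathcal{H}_{t,c}$ as a \emph{free right $B$-module} on a basis of $S\mathfrak{h}^{*}$: the multiplication map $S\mathfrak{h}^{*}\otimes_{\Bbbk}B\to\mathcal{H}_{t,c}$, $f\otimes b\mapsto fb$, is a bijection on ordered monomials by PBW, and it is manifestly right $B$-linear by associativity.

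Granting this, the computation is formal. Since $B_{+}$ is a left ideal of $B$ (so $BB_{+}=B_{+}$), we get $\mathcal{H}_{t,c}B_{+}=(S\mathfrak{h}^{*}\otimes_{\Bbbk}B)B_{+}=S\mathfrak{h}^{*}\otimes_{\Bbbk}B_{+}$, and therefore
$$\mathcal{M}_{t,c}=\mathcal{H}_{t,c}/\mathcal{H}_{t,c}B_{+}\cong S\mathfrak{h}^{*}\otimes_{\Bbbk}(B/B_{+})\cong S\mathfrak{h}^{*}\otimes_{\Bbbk}\Bbbk\cong S\mathfrak{h}^{*}.$$
Concretely this is the map $f\mapsto f\otimes 1$; surjectivity can also be checked by hand, since any $h\in\mathcal{H}_{t,c}$ written as $f\sigma a$ (with $f\in S\mathfrak{h}^{*}$, $\sigma\in S_n$, $a\in S\mathfrak{h}$) satisfies $h\cdot(1\otimes1)=\varepsilon(a)\,(f\otimes1)$, using $a\otimes1=\varepsilon(a)(1\otimes1)$ and $\sigma\otimes1=1\otimes1$.

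Finally I would verify that the grading is respected. The ideal $B_{+}$ is homogeneous, since the elements $\sigma-1$ lie in degree $0$ and the positive-degree part of $S\mathfrak{h}$ lies in degrees $\le-1$ (as $\deg y=-1$); hence $\mathcal{H}_{t,c}B_{+}$ is a graded submodule and $\mathcal{M}_{t,c}$ is graded, with generator $1\otimes1$ in degree $0$. Under the isomorphism a homogeneous $f\in S\mathfrak{h}^{*}$ of degree $d$ maps to its class in degree $d$, so the identification is one of graded vector spaces. The only genuine input is PBW, and the one mild point requiring care is the reordering of the tensor factors: I would either invoke the order-independence of the PBW basis directly, or deduce the ``$S\mathfrak{h}^{*}$-first'' ordering from Theorem \ref{th1.1} by applying the standard anti-automorphism of $\mathcal{H}_{t,c}$ that interchanges $\mathfrak{h}$ and $\mathfrak{h}^{*}$ and sends $\sigma\mapsto\sigma^{-1}$.
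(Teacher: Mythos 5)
Your proposal is correct and follows essentially the same route as the paper: both rest on the PBW decomposition with $S\mathfrak{h}^{*}$ placed on the left, so that the $\Bbbk S_n\ltimes S\mathfrak{h}$ factor collapses onto the trivial module ($\sigma\mapsto 1$, $q(\mathbf{y})\mapsto q(0)$) and $\mathcal{M}_{t,c}$ inherits a basis from $S\mathfrak{h}^{*}$. Your version merely makes explicit what the paper leaves implicit — the freeness of $\mathcal{H}_{t,c}$ as a right $B$-module, the identification $\mathcal{M}_{t,c}=\mathcal{H}_{t,c}/\mathcal{H}_{t,c}B_{+}$, the reordering of the PBW factors, and the compatibility with the grading.
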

\begin{proof}
We have $$\mathcal{M}_{t,c}=\mathcal{H}_{t,c}(S_n,\mathfrak{h})\otimes_{\Bbbk S_n\ltimes S\mathfrak{h}}\Bbbk=\Bbbk S_n\ltimes (S\mathfrak{h}\otimes S\mathfrak{h}^*)\otimes_{\Bbbk S_n\ltimes S\mathfrak{h}}\Bbbk$$$$\implies f(\mathbf{x})\sigma q(\mathbf{y})\otimes 1=f(\mathbf{x})\otimes \sigma q(\mathbf{y})1=f(\mathbf{x})\otimes q(0),$$ where $f,q$ are polynomials, $\mathbf{x}$ and $\mathbf{y}$ are vectors $(x_1,\dots ,x_n)$ and $(y_1,\dots ,y_n)$, and $\sigma\in S_n$. So, by the PBW theorem for Cherednik algebras, $\mathcal{M}_{t,c}$ has a basis of elements of $S\mathfrak{h}^*$ (polynomials in $x_i$). 
\end{proof}
\begin{remark}
The Verma module $\mathcal{M}_{t,c}$ has a grading by degree, setting $\deg x_i=1$, similar to that of $\mathcal{H}_{t,c}(S_n,\mathfrak{h})$. Note that the isomorphism in Proposition~\ref{p1.2} is that of graded vector spaces.
\end{remark}
This shows that $\mathcal{M}_{t,c}\cong S\mathfrak{h}^*$ as graded vector spaces, but they are also isomorphic as representations. We have the map $y_i\rightarrow D_{y_i}$, since the action of $D_{y_i}$ and $y_i$ are given by the same relations. We also have the following identification:
\begin{prop}
\label{p1.3}
We have an isomorphism $\mathcal{H}_{t,c}(S_n,\mathfrak{h})^{opp}\cong \mathcal{H}_{t,c}(S_n,\mathfrak{h}^*)$.
\end{prop}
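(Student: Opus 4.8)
The plan is to exhibit an explicit bijective anti-homomorphism $\phi\colon \mathcal{H}_{t,c}(S_n,\mathfrak{h})\to \mathcal{H}_{t,c}(S_n,\mathfrak{h}^*)$, since a bijective anti-homomorphism $A\to B$ is precisely an isomorphism $A^{opp}\to B$. Both algebras are assembled from the \emph{same} data $\mathfrak{h}$, $\mathfrak{h}^*$ and $\Bbbk S_n$; they differ only in which of $\mathfrak{h},\mathfrak{h}^*$ plays the role of the position (degree $+1$) space and which plays the role of the Dunkl (degree $-1$) space. This suggests defining $\phi$ to be the identity on $\mathfrak{h}$, the identity on $\mathfrak{h}^*$, and $\phi(\sigma)=\sigma^{-1}$ for $\sigma\in S_n$, and then extending $\phi$ to the whole algebra by the anti-homomorphism rule $\phi(ab)=\phi(b)\phi(a)$.

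First I would check that $\phi$ is well defined, i.e. that it sends every defining relation of Definition \ref{df1.1} to a relation holding in $\mathcal{H}_{t,c}(S_n,\mathfrak{h}^*)$. The commuting relations $[x_i,x_j]=0$ and $[y_i-y_j,y_\ell-y_k]=0$ are immediate, since an anti-homomorphism sends $[a,b]$ to $[\phi(b),\phi(a)]=-[\phi(a),\phi(b)]$, and in the target the $x$'s (now Dunkl elements of $\mathfrak{h}^*$) commute among themselves, as do the $y$'s (now position elements of $\mathfrak{h}$). The crossed relations between $S_n$ and the generators reduce to the equivariances $\sigma(y_i-y_j)\sigma^{-1}=y_{\sigma(i)}-y_{\sigma(j)}$ and $\sigma x_i\sigma^{-1}=x_{\sigma(i)}$, which hold in both algebras; combined with $\phi(\sigma)=\sigma^{-1}$ the anti-homomorphism rule matches them up.

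The essential step is the mixed commutator. Rather than work with the individual relation $[y_i-y_j,x_i]$, which is awkward because $x_i$ exists in $S\mathfrak{h}^*$ while the individual $y_i$ does not live in $S\mathfrak{h}$, I would rewrite it intrinsically in terms of differences. Combining the two relations of Definition \ref{df1.1} gives
\[
[y_i-y_j,\,x_i-x_j]=2t-2c\sigma_{ij}-c\sum_{k\ne i}\sigma_{ik}-c\sum_{k\ne j}\sigma_{jk},
\]
an identity whose left-hand side lies in $\mathfrak{h}\cdot\mathfrak{h}^*$ and whose right-hand side is manifestly invariant under swapping the roles of $\mathfrak{h}$ and $\mathfrak{h}^*$ and under $\sigma\mapsto\sigma^{-1}$ (every term is a transposition). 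The corresponding relation in $\mathcal{H}_{t,c}(S_n,\mathfrak{h}^*)$ reads $[x_i-x_j,\,y_i-y_j]=2t-2c\sigma_{ij}-c\sum_{k\ne i}\sigma_{ik}-c\sum_{k\ne j}\sigma_{jk}$, with the reversed bracket on the left but the same right-hand side. Applying $\phi$ to the source relation, the anti-homomorphism rule turns $[y_i-y_j,x_i-x_j]$ into $[x_i-x_j,y_i-y_j]$, so that the extra sign from $\phi([a,b])=-[\phi(a),\phi(b)]$ exactly cancels the sign difference between the two brackets, and the right-hand side is fixed; hence the relation is preserved. I would then record that these difference relations, together with $\sum x_i=0$, are equivalent to the original generating relations, so nothing is lost.

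Finally, bijectivity follows from the PBW theorem (Theorem \ref{th1.1}) applied to both algebras: $\phi$ carries the PBW basis $S\mathfrak{h}\otimes\Bbbk S_n\otimes S\mathfrak{h}^*$ to the PBW basis of the target in reversed tensor order, hence is a linear isomorphism; equivalently, the analogously defined anti-homomorphism $\mathcal{H}_{t,c}(S_n,\mathfrak{h}^*)\to \mathcal{H}_{t,c}(S_n,\mathfrak{h})$ serves as a two-sided inverse. The main obstacle is purely bookkeeping: tracking the sign introduced by the anti-homomorphism against the sign flip $[y,x]=-[x,y]$, and phrasing the mixed relation intrinsically so that the asymmetry between the individual variables $x_i$ and the difference variables $y_i-y_j$ does not cause a well-definedness problem.
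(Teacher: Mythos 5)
The paper states this proposition without proof, so your argument has to stand on its own. Your overall strategy is the correct (and standard) one: take $\phi$ to be the identity on $\mathfrak{h}$ and on $\mathfrak{h}^*$ and inversion on $S_n$, extend anti-multiplicatively, check the defining relations, and get bijectivity from the analogously defined inverse map (or from Theorem \ref{th1.1}). Your treatment of the commutativity relations, the crossed relations with $S_n$, and the bijectivity step are all fine.

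The genuine gap is in the key step. You verify $\phi$ only on the ``diagonal'' difference relations $[y_i-y_j,\,x_i-x_j]$ and then assert that these, together with $\sum_i x_i=0$, are equivalent to the generating relations of Definition \ref{df1.1}. That assertion is false for $n\ge 3$, not merely unrecorded. The tensors $(y_i-y_j)\otimes(x_i-x_j)$ do not span $\mathfrak{h}\otimes\mathfrak{h}^*$: already for $n=3$, writing everything in the bases $y_1-y_2,\,y_2-y_3$ and $x_1-x_2,\,x_2-x_3$, one checks that $(y_1-y_2)\otimes(x_2-x_3)$ is not a linear combination of the three diagonal tensors. Moreover, $S_n$-conjugation permutes diagonal relations among themselves, and multiplication by anything of positive tensor degree only raises tensor degree, so the two-sided ideal generated by the diagonal relations (plus the commutativity relations) contains no element whose mixed degree-two part could produce, say, $[y_1-y_2,x_3]-c\sigma_{13}+c\sigma_{23}$. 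Hence checking $\phi$ on the diagonal relations alone does not show that $\phi$ descends to the quotient algebra. The fix is easy, and the detour was unnecessary in the first place: the relations of Definition \ref{df1.1} involve only $y_i-y_j\in\mathfrak{h}$ and $x_k\in\mathfrak{h}^*$, both honest elements of both algebras, so no ``individual $y_i$'' ever appears and there is no well-definedness issue. Applying $\phi$ directly to $[y_i-y_j,x_i]=t-c\sigma_{ij}-c\sum_{k\ne i}\sigma_{ik}$ gives $[x_i,y_i-y_j]$ equal to the same right-hand side (transpositions are involutions, so the right-hand side is $\phi$-fixed), and this is exactly the defining mixed relation of $\mathcal{H}_{t,c}(S_n,\mathfrak{h}^*)$, in which $\mathfrak{h}^*$ is the Dunkl side, $\mathfrak{h}$ the polynomial side, and the roots $x_i-x_j$ and coroots $y_i-y_j$ trade places; likewise $[y_i-y_j,x_k]\mapsto[x_k,y_i-y_j]=c\sigma_{ik}-c\sigma_{jk}$. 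Alternatively, if you insist on the difference form, you must check the full family $[y_i-y_j,\,x_k-x_l]$ for all $i,j,k,l$, which by bilinearity in the two arguments (together with $\sum_i x_i=0$) does recover every relation of Definition \ref{df1.1} --- but the diagonal subfamily alone does not.
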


\begin{definition}
\label{df1.4}
The contravariant form $\mathcal{B}:\mathcal{M}_{t,c}(S_n,\mathfrak{h},\Bbbk)\times \mathcal{M}_{t,c}(S_n,\mathfrak{h}^*,\Bbbk)\rightarrow \Bbbk$ is a bilinear form satisfying the following properties:
\begin{itemize}
    \item It is $S_n$-invariant: for $\sigma \in S_n$, then $\mathcal{B}(\sigma f,\sigma q)=\mathcal{B}(f,q)$.
    \item For $x\in \mathfrak{h}^*$, $f\in \mathcal{M}_{t,c}(\mathfrak{h})$, $q\in \mathcal{M}_{t,c}(\mathfrak{h}^*)$, then $\mathcal{B}(xf,q)=\mathcal{B}(f,D_{x}(q))$.
    \item For $y\in \mathfrak{h}$, $f\in \mathcal{M}_{t,c}(\mathfrak{h})$, $q\in \mathcal{M}_{t,c}(\mathfrak{h}^*)$, then $\mathcal{B}(f,yq)=\mathcal{B}(D_{y}(f),q)$.
    \item The form is zero on elements of different degrees; i.e., if $f\in \mathcal{M}_{t,c}(\mathfrak{h})_{i}$ and $q\in \mathcal{M}_{t,c}(\mathfrak{h}^*)_j$ for $i\ne j$, then $\mathcal{B}(f,q)=0$.
    \item If $f\in \mathcal{M}_{t,c}(\mathfrak{h})_{0}$ and $q\in \mathcal{M}_{t,c}(\mathfrak{h}^*)_0$, then $\mathcal{B}(f,q)=f\cdot q$.
\end{itemize}
\end{definition}
Effectively, this contravariant form defines a bilinear form $\mathcal{B}:S\mathfrak{h}\times S\mathfrak{h}^*\rightarrow \Bbbk$ satisfying $\mathcal{B}(1,1)=1$, $\mathcal{B}(1,x_i)=0$, and $\mathcal{B}(f(y),q(x))=\mathcal{B}(1,D_{f(y)}(q(x)))=[x^0]f(D_y)q(x)$ where $[x^0]$ denotes the constant term when $f(D_y)\in S\mathfrak{h}$ acts on $q(x)\in S\mathfrak{h}^*$.
\begin{definition}
\label{df1.5}
Define an $\mathcal{H}_{t,c}(S_n,\mathfrak{h})$ representation $\mathcal{L}_{t,c}=\mathcal{M}_{t,c}/\ker \mathcal{B}$, where $\ker \mathcal{B}=\{x\in S\mathfrak{h}^*|\mathcal{B}(y,x)=0\hspace{2mm}\forall \hspace{2mm}y\in S\mathfrak{h}\}$.
\end{definition}
Note that $\ker \mathcal{B}$ is a subrepresentation and therefore also an ideal in the algebra of polynomials.
\begin{lemma}
\label{kerB}
For a fixed $f\in S\mathfrak{h}^*$ with no constant term, if $D_{y_i-y_j}f\in \ker \mathcal{B}$ for all $i,j$, then $f\in \ker\mathcal{B}$.
\end{lemma}
\begin{proof}
It suffices to prove that $\mathcal{B}(y,f)=0$ for all $y\in S\mathfrak{h}$. Since $y\in S\mathfrak{h}$, there exist polynomials $t_{ij}\in \Bbbk[y_1,\dots,y_n]$ such that $y=c+\sum_{i,j}(y_i-y_j)t_{ij}$ for $c\in \Bbbk$. By linearity of $\mathcal{B}$, we have $$\mathcal{B}(y,f)=\mathcal{B}(c,f)+\sum_{i,j}\mathcal{B}((y_i-y_j)t_{ij},f)=0+\sum_{i,j}\mathcal{B}(t_{ij},D_{y_i-y_j}f)=\sum_{i,j}0=0,$$ since by hypothesis $c$ is in the $0^{\text{th}}$ graded component and $f$ is not, and $D_{y_i-y_j}f\in \ker\mathcal{B}$ for all $i,j$.
\end{proof}
\begin{definition}
\label{df1.6}
Define the \textit{Baby Verma module} $\mathcal{N}_{t,c}(S_n,\mathfrak{h},\Bbbk)$ as follows:
\begin{itemize}
    \item If $t=1$, then $\mathcal{N}_{1,c}=\mathcal{M}_{1,c}/\left(\left(S\mathfrak{h}^*\right)^{S_n}\right)_{+}^p \mathcal{M}_{1,c}$, or $S\mathfrak{h}^*$ modulo the ideal generated by the $S_n$-invariant polynomials of positive degree raised to the $p^{\text{th}}$ power.
    \item If $t=0$, then $\mathcal{N}_{0,c}=\mathcal{M}_{0,c}/\left(\left(S\mathfrak{h}^*\right)^{S_n}\right)_+ \mathcal{M}_{0,c}$, or $S\mathfrak{h}^*$ modulo the ideal generated by the $S_n$-invariant polynomials of positive degree.
\end{itemize}
\end{definition}
It follows that $\mathcal{L}_{t,c}=\mathcal{N}_{1,c}/\ker \mathcal{B}$, because $\left(\left(S\mathfrak{h}^*\right)^{S_n}\right)_{+}^p \mathcal{M}_{1,c}\subset \ker \mathcal{B}$.

We have the following statements from, e.g., \cite{balagovic2013representations}:
\begin{enumerate}
    \item $\left((S\mathfrak{h})^{S_n}\right)_+$ is finitely generated over $\Bbbk$. (Fundamental theorem on symmetric polynomials)
    \item All $\mathcal{N}_{t,c}$ (and thus $\mathcal{L}_{t,c}$) are finite dimensional.
    \item $\ker \mathcal{B}$ is a maximal proper graded submodule of $\mathcal{M}_{t,c}$.
    \item $\mathcal{L}_{t,c}$ is irreducible.
\end{enumerate}

\begin{definition}
\label{d1.11}
We define the Hilbert series of an $\mathbb{N}$-graded module $M$ to be $h_{M}(z)=\sum_{i\ge 0}\dim M[i]z^i$, where $M[i]$ is the $i^{\text{th}}$ graded component of $M$.
\end{definition}
The quotient $\mathcal{L}_{t,c}$ inherits the grading from $\mathcal{M}_{t,c}$, hence we assign to it the Hilbert series $h_{\mathcal{L}_{t,c}}(z)=\sum_{i\ge 0}\dim \mathcal{L}_{t,c}[i]z^i$. In the general case, Etingof and Rains present the following (yet unpublished) conjecture for the Hilbert series. Let $n=kp+r$, $0\le r<p$, $$[k]_z=\frac{1-z^k}{1-z},\hspace{3mm} [k]_z!=[k]_z[k-1]_z\dotsm [1]_z,\hspace{2mm} Q_r(n,z)=\binom{n-1}{r-1}z^{r+1}+\sum_{i=0}^{r}\binom{n-r-2+i}{i}z^i.$$

\begin{conjecture}[Etingof, Rains]
\label{conj}
The Hilbert series for $\mathcal{L}_{t,c}$, with $c$ generic, is of the form $$h_{\mathcal{L}_{0,c}}(z)=[r]_z![p]_zQ_r(n,z)\hspace{3mm}\text{and}\hspace{3mm} h_{\mathcal{L}_{1,c}}(z)=[p]_z^{n-1}[r]_{z^p}![p]_{z^p}!Q_r\left(n,z^p\right).$$
\end{conjecture}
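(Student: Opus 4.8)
The plan is to prove the two formulas separately, reducing the $t=1$ case to the $t=0$ case, and to attack the $t=0$ formula for all residues $r$ by an explicit determination of the radical $\ker\mathcal{B}$ together with a matching pair of dimension bounds. Recall that $\mathcal{L}_{0,c}=\mathcal{N}_{0,c}/\ker\mathcal{B}$, and that the baby Verma module $\mathcal{N}_{0,c}$ is the coinvariant-type quotient $S\mathfrak{h}^*/((S\mathfrak{h}^*)^{S_n})_+$, whose Hilbert series is the regular-sequence product $\prod_{i=2}^n [i]_z$ (the invariants are generated by $e_2,\dots,e_n$ of degrees $2,\dots,n$, a regular sequence in any characteristic). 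The goal is therefore to compute the Hilbert series of the ideal $\ker\mathcal{B}$ inside $\mathcal{N}_{0,c}$, and I would establish the claimed formula by two inequalities. For the upper bound on $\dim\mathcal{L}_{0,c}[m]$, I would produce an explicit family of elements of $\ker\mathcal{B}$ and show that the quotient by the ideal they generate already has Hilbert series at most $[r]_z![p]_z Q_r(n,z)$; using $\mathcal{B}(f,q)=[x^0]f(D_y)q$ together with $S_n$-invariance and the Dunkl commutation relations, membership in $\ker\mathcal{B}$ is checked by showing the relevant Dunkl operators annihilate the candidate polynomials up to lower-order terms. For the lower bound, I would exhibit, in each degree $m$ with nonzero coefficient in $[r]_z![p]_z Q_r(n,z)$, explicit elements of $\mathcal{M}_{0,c}$ that pair nondegenerately under $\mathcal{B}$, so that those degrees survive in the quotient; equivalently, I would show $\mathcal{B}$ is nondegenerate on a distinguished complement of the matching dimension.

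The shape of $Q_r(n,z)=\binom{n-1}{r-1}z^{r+1}+\sum_{i=0}^r\binom{n-r-2+i}{i}z^i$ is the main guide for constructing these generators. The coefficients $\binom{n-r-2+i}{i}$ are exactly the dimensions of the degree-$i$ components of a polynomial ring in $n-r-1$ variables, which strongly suggests that, after factoring out the $S_r$-coinvariant contribution $[r]_z!$ and the single cyclic factor $[p]_z$, the surviving quotient looks like such a truncated polynomial ring together with one top-degree correction term in degree $r+1$. I would make this precise by isolating a parabolic subgroup $S_r\times S_{n-r}$ (more generally a product reflecting the decomposition $n=kp+r$) and describing $\ker\mathcal{B}$ via singular vectors adapted to this decomposition, specializing the construction so as to recover the known generators in the solved cases $r=0$ (Devadas--Sun, \cite{devadas2016polynomial}) and $r=1$ (Theorem \ref{th2.34}).

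For the $t=1$ formula I would not repeat the kernel analysis but instead reduce it to the $t=0$ result. The baby Verma Hilbert series already satisfy $h_{\mathcal{N}_{1,c}}(z)=[p]_z^{n-1}h_{\mathcal{N}_{0,c}}(z^p)$, since at $t=1$ one quotients by the $p$-th powers $e_2^p,\dots,e_n^p$ of degrees $2p,\dots,np$. The key is to promote this to the irreducible quotients by showing that $\ker\mathcal{B}$ at $t=1$ is, up to the separable factor accounting for $\Bbbk[x_1,\dots,x_n]/\Bbbk[x_1^p,\dots,x_n^p]$, the Frobenius twist $z\mapsto z^p$ of $\ker\mathcal{B}$ at $t=0$; this is the content of the relationship discussed in \cite{balagovic2013representations}, and it rests on the fact that in characteristic $p$ the derivative part $t\partial_{x_i}$ of the Dunkl operator kills $p$-th powers, so that the subalgebra $\Bbbk[x_1^p,\dots,x_n^p]$ carries a copy of the $t=0$ theory. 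Combining this reduction with the $t=0$ formula yields $h_{\mathcal{L}_{1,c}}(z)=[p]_z^{n-1}h_{\mathcal{L}_{0,c}}(z^p)$, and matching this against the stated closed form — carefully tracking the $[p]$ and $[r]!$ factors under $z\mapsto z^p$ — completes the $t=1$ case.

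The main obstacle is the general-$r$ determination of $\ker\mathcal{B}$, and specifically the completeness half of the argument: exhibiting candidate singular vectors is comparatively routine, but proving that the ideal they generate is all of $\ker\mathcal{B}$ — equivalently, that the two bounds meet — is precisely the step that does not transfer verbatim from $r=0$ or $r=1$. In those cases the small number of generators makes the spanning argument tractable, whereas for general $r$ the interaction between the $S_r$-coinvariant factor and the $[p]_z$ factor forces more intricate combinatorial bookkeeping, and one must use the genericity of $c$ decisively to exclude accidental extra relations that occur only at special, non-generic values. I expect this is why the conjecture remains open beyond the cases treated here, and a complete proof will likely require either a uniform construction of the singular vectors indexed by the combinatorics of $n=kp+r$, or a geometric or character-theoretic identification of $\mathcal{L}_{t,c}$ that bypasses the explicit kernel entirely.
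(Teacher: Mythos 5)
You were asked to prove Conjecture \ref{conj}, but this statement is precisely that --- a conjecture. The paper does not prove it; it establishes only the special cases $t=0$ with $n\equiv 1\pmod p$ (Theorem \ref{th2.34}) and $t=1$ with $p=2$ (Theorem \ref{th3.15}), alongside the earlier case $p\mid n$ of Devadas and Sun. Your proposal is likewise not a proof but a program, and you concede the decisive point yourself in the final paragraph: the completeness half of the $t=0$ argument --- showing that the candidate singular vectors adapted to $n=kp+r$ generate all of $\ker\mathcal{B}$ for general $r$ --- is exactly the open problem. An outline whose central step is acknowledged as unresolved has a genuine gap; nothing in the proposal supplies a mechanism (uniform construction of generators, a spanning argument, or a nondegeneracy argument for the claimed complement) that would close it.

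There is also a concrete error in the one step you present as essentially done: the reduction of $t=1$ to $t=0$. Proposition \ref{p1.6} (Balagovic--Chen) says only that $h_{\mathcal{L}_{1,c}}(z)=\left(\frac{1-z^p}{1-z}\right)^{n-1}h\left(z^p\right)$ for \emph{some} polynomial $h$ with nonnegative integer coefficients; it does not identify $h$ with $h_{\mathcal{L}_{0,c}}$, and the remark following the conjecture in the paper only observes that the two \emph{conjectural} formulas are consistent with this shape. Your justification --- that $t\partial_{x_i}$ kills $p$-th powers, so $\Bbbk[x_1^p,\dots,x_n^p]$ carries a copy of the $t=0$ theory --- fails on the reflection term of the Dunkl operator: for $g$ a polynomial in $p$-th powers, $(1-\sigma_{ik})g$ is divisible by $x_i^p-x_k^p=(x_i-x_k)^p$, so $\frac{1-\sigma_{ik}}{x_i-x_k}\,g$ is divisible by $(x_i-x_k)^{p-1}$ and is not itself a polynomial in $p$-th powers; the $t=1$ Dunkl operators do not restrict on the Frobenius subalgebra to the $t=0$ operators. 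Indeed, if the reduction you invoke were a theorem, all of Section 3 of the paper (Theorem \ref{th3.9} and the degree-by-degree analysis culminating in Theorem \ref{th3.15}) would be superfluous, since one could simply cite Theorem \ref{th2.34} and transport it by $z\mapsto z^p$; the authors' separate and laborious $t=1$ computation reflects that this passage from $\mathcal{N}$-level Hilbert series to $\mathcal{L}$-level Hilbert series is itself open.
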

\begin{remark}
In the case $t=0$, we merely need $c\ne 0$ to be generic.
\end{remark}

\begin{definition}
\label{df1.7}
A \textit{singular polynomial} is a polynomial $f\in S\mathfrak{h}^*$ which lies in the simultaneous kernel of all Dunkl operators $D_{y_{i}-y_{j}}$, i.e. $D_{y_{i}-y_{j}}f=0$ for all $i,j$.
\end{definition}

The singular polynomials generate a submodule lying in $\ker \mathcal{B}$, thus (in positive characteristic) we would like to find such generators to understand $\ker\mathcal{B}$. This would allow us to understand $\mathcal{L}_{t,c}$. 

\subsection{Characteristic 0}
The singular polynomials for characteristic $0$ are known; see, for example, \cite{etingof2010lecture}.
\begin{prop}
\label{p1.4}
If $\text{char}\hspace{1mm}\Bbbk=0$ and $c=\frac{r}{n}$ for some $r$ not divisible by $n$, then the singular polynomials for $t=1$ are $\text{Res}_{\infty}\left[\frac{dz}{z-x_j}\prod_{i=1}^{n}(z-x_i)^c\right]$ for $j=1,2,\dots ,n-1$. (See \cite{chmutova2003some}, Proposition 3.1, for original reference, or \cite{devadas2014representations}, Proposition 1.2.)
\end{prop}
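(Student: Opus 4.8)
The plan is to treat the residue as a device for extracting a single coefficient from a generating function and to show that the Dunkl operators annihilate that coefficient. Write $\Phi(z)=\prod_{i=1}^{n}(z-x_i)^c$ and $G_j(z)=\frac{1}{z-x_j}\Phi(z)$, so that $f_j=\mathrm{Res}_{\infty}[G_j(z)\,dz]$. Expanding at infinity, $\Phi(z)=z^{nc}\prod_i(1-x_i/z)^{c}=z^{r}(1+O(1/z))$ since $nc=r$, so $G_j(z)=z^{r-1}(1+O(1/z))$ is a formal Laurent series in $1/z$ whose coefficients are honest polynomials in the $x_i$ (the binomial coefficients $\binom{c}{m}$ are well defined because $\mathrm{char}\,\Bbbk=0$). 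The residue picks out the coefficient of $z^{-1}$, which is homogeneous of degree $r$; hence each $f_j\in S\mathfrak{h}^*$ is homogeneous of degree $r$. Because $D_{y_i}$ acts only on the $x$-coefficients and does not involve $z$, it commutes with $\mathrm{Res}_{\infty}$; moreover $\mathrm{Res}_{\infty}$ kills every total $z$-derivative (the coefficient of $z^{-1}$ in $\partial_z g$ is always $0$). So it suffices to prove that, for each $i$ and $j$, the function $D_{y_i}G_j(z)$ is either identically zero or a total $z$-derivative.

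First I would compute $D_{y_i}G_j$ for $i\ne j$. Here $\partial_{x_i}G_j=\frac{1}{z-x_j}\,\partial_{x_i}\Phi=-\frac{c\,\Phi}{(z-x_j)(z-x_i)}$, using $\partial_{x_i}\Phi=-c\Phi/(z-x_i)$. In the divided-difference sum $-c\sum_{k\ne i}\frac{1-\sigma_{ik}}{x_i-x_k}G_j$ every term with $k\ne j$ vanishes, since $\sigma_{ik}$ fixes both $\Phi$ and $\frac{1}{z-x_j}$; the single surviving term $k=j$ gives $-c\cdot\frac{1}{x_i-x_j}\Phi\bigl(\tfrac{1}{z-x_j}-\tfrac{1}{z-x_i}\bigr)=\frac{c\,\Phi}{(z-x_j)(z-x_i)}$. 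These two contributions cancel, so $D_{y_i}G_j\equiv 0$ for $i\ne j$, and therefore $D_{y_i}f_j=0$. For $i=j$ I would instead compute $\partial_{x_j}G_j=\frac{(1-c)\Phi}{(z-x_j)^2}$ and the reflection part $-c\sum_{k\ne j}\frac{\Phi}{(z-x_j)(z-x_k)}$, and compare the total with $\partial_z G_j=\frac{(c-1)\Phi}{(z-x_j)^2}+c\Phi\sum_{k\ne j}\frac{1}{(z-x_j)(z-x_k)}$ (obtained from $\partial_z\Phi=c\Phi\sum_k\frac{1}{z-x_k}$). One reads off $D_{y_j}G_j=-\partial_z G_j$, a total derivative, whence $D_{y_j}f_j=0$ as well. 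Combining the two cases, $D_{y_i}f_j=0$ for all $i,j$, so every $f_j$ lies in the common kernel of the Dunkl operators and is singular.

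Next I would explain the index range and nonvanishing. Summing over $j$ gives $\sum_{j=1}^{n}G_j=\Phi\sum_j\frac{1}{z-x_j}=\frac{1}{c}\,\partial_z\Phi$, a total derivative, so $\sum_{j=1}^{n}f_j=0$; this is the unique linear relation among the $f_j$, which is why one records only $f_1,\dots,f_{n-1}$. Since $S_n$ permutes the $x_j$ and hence the $G_j$, the span of the $f_j$ is the reflection representation $\mathfrak{h}^*$ sitting in degree $r$, which is exactly the expected $(n-1)$-dimensional space of singular polynomials; the hypothesis $n\nmid r$ is what guarantees these $f_j$ are nonzero (and that $r$ is the minimal degree in which singular polynomials occur).

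The main obstacles are of two flavors. The computational heart — the cancellation for $i\ne j$ and the identification $D_{y_j}G_j=-\partial_z G_j$ — is clean once the generating function is in place, but it rests on manipulating the non-integer power $\Phi(z)=\prod(z-x_i)^{r/n}$, so I would take care to phrase every identity (the logarithmic derivative, the action of $1-\sigma_{ik}$, the commuting of $D_{y_i}$ past $\mathrm{Res}_{\infty}$) as a termwise statement about the formal expansion in $1/z$, where $\mathrm{char}\,\Bbbk=0$ makes all binomial coefficients meaningful. The genuinely harder point is \emph{completeness} — that the $f_j$ exhaust all singular polynomials rather than merely producing some — which does not follow from the residue computation alone and would require the representation-theoretic classification of singular polynomials for type $A$ (equivalently a dimension count for the relevant graded piece), for which I would cite the references.
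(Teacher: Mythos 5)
Your verification is correct, but there is an important mismatch of expectations here: the paper does not prove Proposition \ref{p1.4} at all --- it is quoted from the literature, with the proof deferred to \cite{chmutova2003some} (Prop.\ 3.1) and \cite{devadas2014representations} (Prop.\ 1.2) --- so there is no internal proof to compare against. What you have written is essentially the standard residue/generating-function argument that appears in those references. Your computations check out: for $i\ne j$ the only surviving divided-difference term (at $k=j$) produces $\frac{c\,\Phi}{(z-x_j)(z-x_i)}$, which cancels $\partial_{x_i}G_j=-\frac{c\,\Phi}{(z-x_j)(z-x_i)}$, giving $D_{y_i}G_j\equiv 0$; and for $i=j$ one indeed gets
\[
D_{y_j}G_j=\frac{(1-c)\Phi}{(z-x_j)^2}-c\,\Phi\sum_{k\ne j}\frac{1}{(z-x_j)(z-x_k)}=-\partial_z G_j,
\]
which the residue kills. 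The hypotheses enter exactly where you say they do: $nc=r\in\mathbb{Z}$ makes $G_j=z^{r-1}\bigl(1+O(1/z)\bigr)$ an honest formal Laurent series with polynomial coefficients (so the residue is a homogeneous degree-$r$ polynomial and commutes with the Dunkl operators, which act coefficientwise), and characteristic $0$ makes the binomial expansions meaningful. Two small caveats. First, your claim that $\sum_j f_j=0$ is the \emph{unique} linear relation (equivalently, that the $f_j$ span an $(n-1)$-dimensional copy of the reflection representation, and in particular are nonzero when $n\nmid r$) is asserted rather than proved; it is true, but it requires an actual nonvanishing/independence argument, not just the symmetry of the construction. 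Second, you correctly identify that the completeness half of the statement --- that these exhaust the singular polynomials --- does not follow from the residue computation and must be cited; since the paper cites the same sources for the whole proposition, your treatment is, if anything, more self-contained than the paper's.
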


The lowest-weight irreducible representations of the rational Cherednik algebra associated to $S_n$ in characteristic $0$ are studied in \cite{gordon2003baby}, and he computes their Hilbert series.

\subsection{The case where $p|n$, by Devadas and Sun}
In \cite{devadas2016polynomial}, Devadas and Sun found the Hilbert polynomial for the representation of the Cherednik algebra $\mathcal{L}_{1,c}$ where $p|n$.

Define the polynomials $$g(z)=\prod_{j=1}^{n}(1-x_iz)\hspace{20mm}\text{and}\hspace{20mm}F(z)=\sum_{m=0}^{p-1}\binom{c}{m}(g(z)-1)^m.$$ Then for $i=1,2,\dots ,n-1$, define $f_i=\left[z^p\right]\frac{F(z)}{1-x_iz}$.

Devadas and Sun showed that the polynomials $f_i$ are singular, linearly independent and homogeneous degree $p$. They also show that if $I_c=\langle f_1,\dots ,f_{n-1}\rangle \subset \mathcal{M}_{t,c}$, then $\mathcal{M}_{t,c}/I_c$ is a complete intersection for generic $c$. In doing so, they show that for generic $c$, the Hilbert series of $\mathcal{L}_{1,c}=\mathcal{M}_{1,c}/I_c$ is $h(z)=\left(\frac{1-z^p}{1-z}\right)^{n-1}$.

\subsection{Some results from Balagovic and Chen}
In \cite{balagovic2013representations}, the following Hilbert series are described.

\begin{prop}
\label{p1.5}
The Hilbert polynomial for $\mathcal{N}_{1,c}$ is $h_{\mathcal{N}_{1,c}}(z)=\frac{\left(1-z^{2p}\right)\left(1-z^{3p}\right)\dotsm \left(1-z^{np}\right)}{(1-z)^{n-1}}$ while the Hilbert polynomial for $\mathcal{N}_{0,c}$ is $h_{\mathcal{N}_{0,c}}(z)=\frac{\left(1-z^{2}\right)\left(1-z^{3}\right)\dotsm \left(1-z^{n}\right)}{(1-z)^{n-1}}$.
\end{prop}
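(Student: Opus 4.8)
The plan is to identify each Baby Verma module with an explicit complete-intersection quotient of a polynomial ring and then apply the standard regular-sequence formula for Hilbert series. Recall from the preliminaries that $S\mathfrak{h}^*\cong\Bbbk[x_1,\dots,x_n]/(e_1)$, where $e_1=x_1+\dots+x_n$; since $e_1$ is a degree-one non-zero-divisor in $\Bbbk[x_1,\dots,x_n]$ (which has Hilbert series $(1-z)^{-n}$), we obtain $h_{S\mathfrak{h}^*}(z)=(1-z)^{-(n-1)}$. By the fundamental theorem on symmetric functions, which holds over any commutative ring and in particular in characteristic $p$, the invariant ring $\Bbbk[x_1,\dots,x_n]^{S_n}$ is the polynomial algebra $\Bbbk[e_1,\dots,e_n]$ on the elementary symmetric polynomials. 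Imposing $e_1=0$ therefore yields $(S\mathfrak{h}^*)^{S_n}=\Bbbk[e_2,\dots,e_n]$, a polynomial algebra whose generators have degrees $2,3,\dots,n$.

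Using this, I would rewrite the two modules of Definition \ref{df1.6} explicitly. For $t=0$ the defining ideal $\bigl((S\mathfrak{h}^*)^{S_n}\bigr)_+\,S\mathfrak{h}^*$ is generated by $e_2,\dots,e_n$, so $\mathcal{N}_{0,c}\cong\Bbbk[x_1,\dots,x_n]/(e_1,e_2,\dots,e_n)$, the coinvariant algebra. For $t=1$ the ideal is generated by the $p$-th powers of positive-degree invariants, and here I would invoke the Frobenius endomorphism: for any $f=\sum_\alpha c_\alpha e^\alpha\in\Bbbk[e_2,\dots,e_n]_+$ we have $f^p=\sum_\alpha c_\alpha^p\,(e_2^p)^{\alpha_2}\cdots(e_n^p)^{\alpha_n}\in(e_2^p,\dots,e_n^p)$, while conversely each $e_i^p$ is itself such a $p$-th power; hence the ideal equals $(e_2^p,\dots,e_n^p)$ and $\mathcal{N}_{1,c}\cong S\mathfrak{h}^*/(e_2^p,\dots,e_n^p)$.

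The only step with genuine content is to verify that $e_2,\dots,e_n$ (respectively $e_2^p,\dots,e_n^p$) form a regular sequence in $S\mathfrak{h}^*$, and I would argue this geometrically. The common vanishing locus of $e_1,\dots,e_n$ in $\mathbb{A}^n$ is exactly the origin, for if all elementary symmetric functions of $x_1,\dots,x_n$ vanish then each $x_i$ is a root of $T^n$, forcing $x=0$; this holds over any field. Thus $e_1,\dots,e_n$ is a homogeneous system of parameters in the Cohen--Macaulay ring $\Bbbk[x_1,\dots,x_n]$, hence a regular sequence. Since $e_i^p=0\iff e_i=0$, the sequence $e_1,e_2^p,\dots,e_n^p$ cuts out the same zero-dimensional locus and is likewise a regular sequence. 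Dividing out the leading non-zero-divisor $e_1$, we conclude that $e_2,\dots,e_n$ and $e_2^p,\dots,e_n^p$ are regular sequences in $S\mathfrak{h}^*$.

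Finally I would apply the elementary fact that if $\theta_1,\dots,\theta_m$ is a regular sequence of homogeneous elements of degrees $d_1,\dots,d_m$ in a graded ring $R$, then $h_{R/(\theta_1,\dots,\theta_m)}(z)=h_R(z)\prod_{i=1}^m(1-z^{d_i})$, each non-zero-divisor contributing one Koszul factor. Taking $R=S\mathfrak{h}^*$ with the sequence $e_2,\dots,e_n$ of degrees $2,\dots,n$ gives
$$h_{\mathcal{N}_{0,c}}(z)=\frac{1}{(1-z)^{n-1}}\prod_{i=2}^n\bigl(1-z^i\bigr)=\frac{\left(1-z^2\right)\left(1-z^3\right)\dotsm\left(1-z^n\right)}{(1-z)^{n-1}},$$
and taking the sequence $e_2^p,\dots,e_n^p$ of degrees $2p,\dots,np$ gives the stated expression for $h_{\mathcal{N}_{1,c}}(z)$. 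The main obstacle is the regular-sequence verification, but this reduces cleanly to the zero-dimensionality of the common zero locus together with the Cohen--Macaulayness of the polynomial ring; the remaining manipulations are formal.
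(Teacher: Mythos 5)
Your overall strategy (realize each $\mathcal{N}_{t,c}$ as the quotient of $S\mathfrak{h}^*$ by a regular sequence and read off the Hilbert series from the Koszul factors) is the standard and correct one, and there is nothing in the paper to compare it against: Proposition \ref{p1.5} is quoted from \cite{balagovic2013representations} without proof. However, your argument has one genuine gap, and it sits exactly where the characteristic-$p$ subtlety lives. The step ``imposing $e_1=0$ therefore yields $(S\mathfrak{h}^*)^{S_n}=\Bbbk[e_2,\dots,e_n]$'' does not follow from the fundamental theorem on symmetric functions: taking $S_n$-invariants does not commute with passing to the quotient $\Bbbk[x_1,\dots,x_n]\to\Bbbk[x_1,\dots,x_n]/(e_1)$. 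The natural map $\Bbbk[x]^{S_n}/\bigl((e_1)\cap\Bbbk[x]^{S_n}\bigr)\to\bigl(\Bbbk[x]/(e_1)\bigr)^{S_n}$ is injective but in positive characteristic need not be surjective, and your claimed identity is in fact \emph{false} whenever $p\mid n$: for $n=p=2$ one has $S\mathfrak{h}^*\cong\Bbbk[\bar x_1]$ with $\sigma_{12}$ acting trivially (since $\bar x_2=-\bar x_1=\bar x_1$ in characteristic $2$), so $(S\mathfrak{h}^*)^{S_2}=\Bbbk[\bar x_1]$, the ideal generated by positive-degree invariants is the whole maximal ideal, and $h_{\mathcal{N}_{0,c}}(z)=1$, not $1+z$ as the formula would predict. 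A proof that never invokes any hypothesis relating $p$ and $n$, yet derives a formula that fails for $p\mid n$, must be incomplete at this point.

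The fix is available in the paper's setting, where $n\equiv 1\pmod p$ and hence $p\nmid n$. Then $V^*=\Bbbk e_1\oplus W$ with $W=\mathrm{Span}\{x_i-\tfrac{1}{n}e_1\}$, and the projection $V^*\to\mathfrak{h}^*$ restricts to an $S_n$-equivariant isomorphism $W\cong\mathfrak{h}^*$. Consequently $\Bbbk[x_1,\dots,x_n]\cong S\mathfrak{h}^*\otimes_\Bbbk\Bbbk[e_1]$ as graded $S_n$-algebras with trivial action on $e_1$, and taking invariants of both sides gives $(S\mathfrak{h}^*)^{S_n}\otimes_\Bbbk\Bbbk[e_1]=\Bbbk[e_1,e_2,\dots,e_n]$, whence $(S\mathfrak{h}^*)^{S_n}$ is the polynomial algebra on the images of $e_2,\dots,e_n$, which is what you wanted. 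With this inserted (and the hypothesis $p\nmid n$ made explicit), the rest of your argument --- the Frobenius computation identifying the $t=1$ ideal with $(e_2^p,\dots,e_n^p)$, the verification via Cohen--Macaulayness that a homogeneous system of parameters is a regular sequence, and the factorization $h_{R/(\theta_1,\dots,\theta_m)}(z)=h_R(z)\prod_i(1-z^{d_i})$ --- is correct and complete.
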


\begin{prop}
\label{p1.6}
The Hilbert polynomial for $\mathcal L_{1,c}$ is $h_{\mathcal{L}_{1,c}}(z)=\left(\frac{1-z^p}{1-z}\right)^{n-1}h\left(z^p\right)$ for some polynomial $h$ with nonnegative integer coefficients.
\end{prop}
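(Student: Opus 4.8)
The shape of the target identity is the key hint: $[p]_z^{n-1}=\left(\frac{1-z^p}{1-z}\right)^{n-1}$ is exactly the Hilbert series of the truncated (``restricted'') algebra $B:=\Bbbk[\mathfrak{h}^*]/(x_1^p,\dots,x_{n-1}^p)$, the span of the reduced monomials $x^\alpha$ with $0\le\alpha_i\le p-1$, while $h(z^p)$ is the Hilbert series of an object graded in multiples of $p$. This suggests exploiting the Frobenius structure of $S\mathfrak{h}^*$ in characteristic $p$: the Frobenius subalgebra $R_1:=\Bbbk[x_1^p,\dots,x_n^p]/(x_1^p+\dots+x_n^p)\subset S\mathfrak{h}^*$, together with the complement $B$, gives $S\mathfrak{h}^*\cong B\otimes_\Bbbk R_1$ with $h_B(z)=[p]_z^{n-1}$ and $R_1$ concentrated in degrees divisible by $p$. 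The plan is to show this decomposition is compatible with the $\mathcal{H}_{1,c}$-action and with $\mathcal B$, so that it descends to $\mathcal L_{1,c}$. Note that Proposition \ref{p1.5} already exhibits exactly this factorization at the level of baby Verma modules, since $h_{\mathcal N_{1,c}}(z)=[p]_z^{n-1}\,h_{\mathcal N_{0,c}}(z^p)$; the task is to show it persists after quotienting by $\ker\mathcal B$.

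The technical heart is a Frobenius descent for Dunkl operators. At $t=1$ I would compute the $p$-th power $D_{y_i}^p$ as an operator on $S\mathfrak{h}^*$. Since $\partial_{x_i}^p=0$ in characteristic $p$ and $x_i^p-x_k^p=(x_i-x_k)^p$, the expectation is that $D_{y_i}^p$ preserves $R_1$ and acts there as the $t=0$ Dunkl operator in the twisted variables $u_i:=x_i^p$ with parameter $c^p$, i.e.\ as $-c^p\sum_{k\ne i}(u_i-u_k)^{-1}(1-\sigma_{ik})$. Establishing this identity (controlling the Jacobson correction terms and checking that they, too, preserve $R_1$) shows that the operators $x_i^p$, the $D_{y_i}^p$, and $\Bbbk S_n$ generate a Frobenius twist of $\mathcal H_{0,c^p}(S_n,\mathfrak h)$ acting on $S\mathfrak h^*$, for which $R_1\cong\mathcal M_{0,c^p}^{(1)}$ is the polynomial representation and $S\mathfrak{h}^*\cong B\otimes R_1$ is free over it with fiber $B$.

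With the descent in hand I would pass to $\mathcal L_{1,c}$. Because $\ker\mathcal B$ is stable under multiplication by the $x_i$ and under the $D_{y_i}$, it is stable under their $p$-th powers, hence is a submodule for the descended algebra. The plan is then to show the contravariant form respects the tensor decomposition $S\mathfrak h^*=B\otimes R_1$, i.e.\ that it factors (up to the appropriate twist) as a form on $B$ times a form on $R_1$; this forces $\ker\mathcal B=B\otimes K$ for a graded subspace $K\subseteq R_1$ concentrated in degrees divisible by $p$, and therefore $\mathcal L_{1,c}\cong B\otimes\mathcal L'$ with $\mathcal L':=R_1/K$ a finite-dimensional graded quotient whose grading lies in $p\mathbb Z$. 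Taking Hilbert series then gives $h_{\mathcal L_{1,c}}(z)=h_B(z)\,h_{\mathcal L'}(z)=[p]_z^{n-1}\,h(z^p)$, where $h$ is the Hilbert series of $\mathcal L'$ written in the variable $z^p$; its coefficients are the graded dimensions $\dim\mathcal L'$, hence nonnegative integers, as required.

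The main obstacle is precisely the compatibility of $\mathcal B$ (equivalently of $\ker\mathcal B$) with the Frobenius tensor decomposition: a priori the radical could mix the $B$ and $R_1$ factors, and one must rule this out. Concretely, I would need to show that the fiber $B$ survives intact in $\mathcal L_{1,c}$, i.e.\ that the restriction of the form to the fiber directions (the local, ``restricted'' Cherednik structure) is nondegenerate, so that the entire $[p]_z^{n-1}$ factor persists and the radical is carried by the base $R_1$. This nondegeneracy, combined with the explicit form of $D_{y_i}^p$, is the delicate point; everything else is bookkeeping with the grading and with the PBW-type freeness of $S\mathfrak h^*$ over $R_1$.
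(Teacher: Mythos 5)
This proposition is not proved in the paper at all: it is imported verbatim from \cite{balagovic2013representations} (alongside Proposition \ref{p1.5}), so your proposal has to be judged on its own correctness rather than against an argument in the text. Measured that way, both substantive steps of your plan are false, and the failures can be exhibited with the paper's own setting and results.

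First, the ``Frobenius descent'' does not hold: $p$-th powers of Dunkl operators neither preserve $R_1$ nor act on it by a $t=0$ Dunkl operator in the variables $u_i=x_i^p$ (with parameter $c^p$ or any single parameter). Take the smallest case covered by the paper, $n=3$, $p=2$, any $c\neq 0$. Working in $\Bbbk[x_1,x_2,x_3]$ one computes exactly
\[
D_{y_1-y_2}\left(x_2^4\right)=c\left(x_2+x_3\right)^3,\qquad
D_{y_1-y_2}\left(\left(x_2+x_3\right)^3\right)=\left(x_2+x_3\right)^2+c\left(x_1^2+x_2^2+x_3^2+x_1x_2+x_1x_3+x_2x_3\right),
\]
so that, reducing modulo $x_1+x_2+x_3$ (where $x_1^2+x_2^2+x_3^2\equiv 0$ in characteristic $2$),
\[
\left(D_{y_1-y_2}\right)^{2}\left(x_2^4\right)\equiv c\,x_1^2+c^2\left(x_1x_2+x_1x_3+x_2x_3\right)\pmod{x_1+x_2+x_3}.
\]
The element $x_1x_2+x_1x_3+x_2x_3$ is not congruent modulo $(x_1+x_2+x_3)$ to any polynomial in $x_1^2,x_2^2,x_3^2$ (substituting $x_3=x_1+x_2$ turns it into $x_1^2+x_1x_2+x_2^2$, which contains the odd monomial $x_1x_2$), so the image of $x_2^4\in R_1$ leaves $R_1$; moreover the term $c\,x_1^2$, of degree one in $c$, already rules out any operator whose coefficients are $c^p$. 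So the twisted $t=0$ module structure on $R_1$ that your argument is built on does not exist.

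Second, and independently, the structural claim $\ker\mathcal B=B\otimes K$ with $K\subseteq R_1$ graded contradicts the paper's results for $p=2$, $t=1$, $n\ge 5$, $c$ transcendental. If such a $K$ existed, then $K=1\otimes K\subseteq\ker\mathcal B\cap R_1$, hence $K[0]=0$ (since $\mathcal B(1,1)=1$) and $K[2]\subseteq\ker\mathcal B[2]=0$ by Proposition \ref{p3.3} (there $\dim\mathcal L_{1,c}[2]=\binom{n}{2}=\dim\mathcal M_{1,c}[2]$); since $R_1$ is concentrated in even degrees, this forces $\left(B\otimes K\right)[4]=B[0]\otimes K[4]=K[4]\subseteq R_1$, i.e.\ every degree-$4$ element of $\ker\mathcal B$ would be a polynomial in the $x_i^2$ alone. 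But Proposition \ref{p3.5} exhibits the singular polynomial $R_{ij}\in\ker\mathcal B[4]$, which contains the monomial $x_i^3x_k$ (any $k\ne i,j,n$; such $k$ exists once $n\ge 5$) with coefficient $1$ --- a monomial with odd exponents. So the radical genuinely mixes the factors $B$ and $R_1$, and no refinement of your ``nondegeneracy on the fiber'' step can rescue the strategy: the step is not delicate but false. What survives of the proposal is only bookkeeping --- the vector-space splitting $S\mathfrak h^*\cong B\otimes R_1$, the centrality of $p$-th powers of invariant polynomials (already used in the paper via $\left(\left(S\mathfrak{h}^*\right)^{S_n}\right)_{+}^p\mathcal M_{1,c}\subseteq\ker\mathcal B$), and the identity $h_{\mathcal N_{1,c}}(z)=[p]_z^{n-1}h_{\mathcal N_{0,c}}\left(z^p\right)$. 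The factorization asserted in Proposition \ref{p1.6} is true, but it arises from subtler structure than a tensor decomposition of $\ker\mathcal B$; for a correct argument one must consult \cite{balagovic2013representations} itself, whose proof necessarily proceeds along different lines.
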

\begin{remark}
This differs from \cite[Prop.~3.4]{balagovic2013representations} by a factor of $\frac{1-z^p}{1-z}$ due to the fact that we use the quotient by $x_1+\dots+x_n$.
\end{remark}

\subsection{Main Results}
We find the Hilbert series for $\mathcal{H}_{t,c}(S_n,\mathfrak{h})$ over fields $\Bbbk$ of characteristic $p|n-1$. The main theorems are Theorem~\ref{th2.34} (which generalizes Theorem~\ref{th2.12}), Theorem~\ref{th3.9}, and Theorem~\ref{th3.15}. Theorem~\ref{th2.34} states that the Hilbert series for $t=0$ is $h_{\mathcal{L}_{0,c}}(z)=\left(\frac{1-z^p}{1-z}\right)\left(1+(n-2)z+z^2\right)$. Theorem~\ref{th3.9} gives a simple, computation-based criterion for whether a given polynomial $f\in\ker\mathcal{B}$ for all $n\equiv 1\pmod{p}$: one only needs to check this condition for small (and finitely many) $n$. Theorem~\ref{th3.15} states that the Hilbert series for $t=1$ and $p=2$ is $h_{\mathcal{L}_{1,c}}(z)=\left(1+z^2\right)(1+z)^{n-1}\left(1+(n-2)z^2+z^4\right)$.

\section{The case $t=0$}
Note that in this case, the Dunkl operator is just $$D_{y_i-y_j}=-c\sum_{k\ne i}\frac{1-\sigma_{ik}}{x_i-x_k}+c\sum_{\ell\ne j}\frac{1-\sigma_{j\ell}}{x_j-x_{\ell}},$$ so the parameter $c$ does not matter (so long as it is nonzero, in which case the representation is trivial) and we may assume that $c=1$.

There is a basis of $\mathcal{M}_{t,c}$ consisting of elements of $\mathbb{F}_p[x_1,x_2,\dots ,x_n]$, and hence we assume all coefficients are from $\mathbb{F}_p$.

We proceed degree by degree and analyze each subspace $\mathcal{M}_{0,c}[i]$ starting from $i=0$ and going up. We will find some polynomials which constitute a subspace $J[i]\subset\ker\mathcal{B}$ and then find bases of $\mathcal{M}_{0,c}/J[i]$. We compute the action of the Dunkl operators to explicitly show that these are not in $\ker\mathcal{B}$, hence $J[i]=\mathcal{B}[i]$.

\subsection{Characteristic $p=2$}
We will first examine the case when the characteristic is $2$. Frequently, we will make the substitution $x_n=-(x_1+x_2+\dots +x_{n-1})=x_1+x_2+\dots +x_{n-1}$.
\begin{prop}
\label{p2.1}
For $i\ne j$, the polynomials $x_i^2+x_ix_j+x_j^2$ for $i\ne j$ are singular.
\end{prop}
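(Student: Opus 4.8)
The plan is to verify directly that the Dunkl operator $D_{y_i-y_j}$ annihilates each polynomial $f=x_a^2+x_ax_b+x_b^2$. Since we are in characteristic $p=2$ and $t=0$, the Dunkl operator reduces to
\[
D_{y_i-y_j}=\sum_{k\ne i}\frac{1-\sigma_{ik}}{x_i-x_k}+\sum_{\ell\ne j}\frac{1-\sigma_{j\ell}}{x_j-x_\ell},
\]
absorbing the sign since $-1=1$. The key observation is that a divided-difference operator $\frac{1-\sigma_{ik}}{x_i-x_k}$ applied to a polynomial $g$ produces the quotient $\frac{g-\sigma_{ik}g}{x_i-x_k}$, which is again a genuine polynomial because $g-\sigma_{ik}g$ vanishes when $x_i=x_k$ and is therefore divisible by $x_i-x_k$. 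So the whole computation stays inside the polynomial ring and amounts to evaluating finitely many such quotients.

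First I would record the single-variable building block: for $g=x_a^2+x_ax_b+x_b^2$, compute $\frac{g-\sigma_{ik}g}{x_i-x_k}$ for each transposition $\sigma_{ik}$ appearing in the sum. The transpositions split into cases according to whether $i,k$ coincide with the indices $a,b$ on which $f$ depends. If neither $i$ nor $k$ lies in $\{a,b\}$, then $\sigma_{ik}$ fixes $f$ and the term vanishes outright. The only nonzero contributions come from transpositions that move exactly one of $a,b$, or that swap $a$ and $b$ directly. A short calculation gives, for instance, $\frac{f-\sigma_{ak}f}{x_a-x_k}=x_a+x_b+x_k$ when $k\notin\{a,b\}$ (using characteristic $2$), and the $\sigma_{ab}$ term contributes nothing since $f$ is symmetric in $x_a,x_b$. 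Assembling these, I would show that $\sum_{k\ne i}\frac{1-\sigma_{ik}}{x_i-x_k}f$ collapses to a simple symmetric expression.

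Next I would sum the two halves of $D_{y_i-y_j}$ and check cancellation. Because $p=2$ turns $(n-1)$-fold sums of $1$ into a parity count, the cross terms of the form $x_a+x_b+x_k$ will pair up and cancel modulo $2$, and the leftover purely-$x$ pieces from the $i$-block and the $j$-block will match and cancel against each other. I expect the cleanest route is to treat the generic case where $i,j,a,b$ are four distinct indices first, establishing the cancellation there, and then handle the degenerate overlaps ($i$ or $j$ equal to $a$ or $b$) as separate short cases. By the remark following Definition~\ref{df1.2}, the $D_{y_i}$ commute and $D_{y_i-y_j}=D_{y_i}-D_{y_j}$, so it in fact suffices to show $D_{y_i-y_j}f=0$ for the generators; equivalently one may verify $D_{y_i}f$ is independent of $i$.

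The main obstacle is bookkeeping rather than conceptual: keeping track of which transpositions act nontrivially and ensuring the parity cancellations in characteristic $2$ are handled correctly across all the index-overlap cases. A way to tame this is to exploit the full $S_n$-symmetry of the problem: since $f$ depends only on the pair $\{a,b\}$ and the Dunkl operators transform equivariantly, I can reduce to a few representative choices of $(i,j)$ relative to $\{a,b\}$ (namely $\{i,j\}\cap\{a,b\}$ having size $0$, $1$, or $2$) and verify each, rather than treating all index combinations individually. Once these representatives are checked, equivariance propagates the conclusion $D_{y_i-y_j}f=0$ to all $i,j$, establishing that $f$ is singular.
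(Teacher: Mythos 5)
Your overall strategy is the same as the paper's: reduce to a spanning set of Dunkl operators (equivalently, check that $D_{y_i}f$ is independent of $i$), compute the divided differences directly, and split into cases according to $|\{i,j\}\cap\{a,b\}|$. Your building blocks are also correct: $\frac{f-\sigma_{ak}f}{x_a-x_k}=x_a+x_b+x_k$ for $k\notin\{a,b\}$, the $\sigma_{ab}$ term vanishes by symmetry, and transpositions disjoint from $\{a,b\}$ contribute nothing. In the cases $|\{i,j\}\cap\{a,b\}|=0$ and $=2$ the cancellation mechanisms you describe do work: in the first, each block is $2(x_a+x_b+x_i)=0$ in characteristic $2$; in the second, the $a$-block and $b$-block are identical and cancel against each other.

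However, there is a genuine gap in the mixed case, say $i=a$ and $j\notin\{a,b\}$, and it is exactly the case where the standing hypotheses must enter. There the $j$-block vanishes on its own (two equal cross terms), while the $a$-block is $\sum_{k\notin\{a,b\}}\left(x_a+x_b+x_k\right)=(n-1)(x_a+x_b)+\sum_{k=1}^{n}x_k$: nothing pairs up, since there are $n-2$ cross terms (an odd number when $n$ is odd), and there is no nonzero $j$-block left to match against. This expression vanishes only because of two facts your proposal never invokes: first, $p=2$ divides $n-1$ (the hypothesis of the entire section), so $(n-1)(x_a+x_b)=0$; second, $x_1+\dots+x_n=0$ in $S\mathfrak{h}^*$, i.e.\ the paper's substitution $x_n=x_1+\dots+x_{n-1}$, so the sum $\sum_k x_k$ dies. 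Neither ``pairing modulo $2$'' nor ``$i$-block/$j$-block matching'' produces this vanishing. The omission is not cosmetic: the proposition is false without $p\mid n-1$, since for even $n$ this case gives $D_{y_a-y_j}f=x_a+x_b\neq 0$. To close the proof you must treat this case separately and use both ingredients --- which is precisely what the paper's computation $(n-2)x_1+(n-2)x_2-(x_1+x_2)=0$ is doing.
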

\begin{proof}
It suffices to prove that the action of the Dunkl operators $D_{y_1-y_r}$ for $r=2,3,\dots ,n$ on $f=x_1^2+x_1x_2+x_2^2$ results in $0$. If $r=2$, then $$D_{y_1-y_2}f=\left[\sum_{k\ne 1}\frac{1-\sigma_{1k}}{x_1-x_k}+\sum_{k\ne 2}\frac{1-\sigma_{2k}}{x_2 - x_k}\right]\left( x_1^2 + x_1 x_2 + x_2^2\right)=\sum_{k\ne 1,2} \left[-(x_1+x_2+x_k)+(x_1 + x_2 + x_k)\right]=0.$$ If $r\ne 2$ then we see that the first sum is the same, $\sum_{k\ne 1}\frac{1-\sigma_{1k}}{x_1-x_k}f=(n-2)x_1+(n-2)x_2-(x_1+x_2)=0$. The second sum, $\sum_{k\ne r}\frac{1-\sigma_{kr}}{x_r-x_k}f$, is $0$ whenever $k\ne 1,2$. But for $k=1,2$ we obtain $-2(x_1+x_2+x_r)=0$. Hence all $D_{y_1-y_r}f=0$.
\end{proof}
\begin{prop}
\label{p2.2}
The dimension of $\mathcal{L}_{0,c}[0]$ is $1$.
\end{prop}
\begin{proof}
All elements are constants.
\end{proof}

\begin{prop}
\label{p2.3}
The dimension of $\mathcal{L}_{0,c}[1]$ is $n-1$.
\end{prop}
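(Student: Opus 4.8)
The plan is to show that $\ker\mathcal{B}$ contains no nonzero element in degree $1$, so that $\mathcal{L}_{0,c}[1]=\mathcal{M}_{0,c}[1]$. Since $\mathcal{M}_{0,c}\cong S\mathfrak{h}^*$ as graded vector spaces and $(S\mathfrak{h}^*)[1]=\mathfrak{h}^*=V^*/(x_1+\dots+x_n)$ has dimension $n-1$, this immediately yields the claim.

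First I would reduce the membership condition $f\in\ker\mathcal{B}[1]$ to the statement that $f$ is singular. By the grading property of $\mathcal{B}$, a degree-$1$ polynomial $f$ lies in $\ker\mathcal{B}$ if and only if $\mathcal{B}(g,f)=0$ for every degree-$1$ element $g\in S\mathfrak{h}$, and these are spanned by the differences $y_i-y_j$. Using the identity $\mathcal{B}(y_i-y_j,f)=[x^0]D_{y_i-y_j}(f)$ and observing that $D_{y_i-y_j}$ lowers degree by one, so that $D_{y_i-y_j}(f)$ is already a constant, this condition is exactly $D_{y_i-y_j}(f)=0$ for all $i,j$; i.e., $f$ is singular.

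Next I would carry out the direct computation. Writing $f=\sum_k a_k x_k$ and applying the $t=0$ Dunkl operator from Definition \ref{df1.2}, one finds $D_{y_i}(x_\ell)=c$ for $\ell\ne i$ and $D_{y_i}(x_i)=-c(n-1)$. Hence $D_{y_i}(f)=c\sum_k a_k-cn\,a_i$, and subtracting the analogous expression for $j$ gives $D_{y_i-y_j}(f)=cn(a_j-a_i)$.

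Finally, the decisive step invokes the hypothesis $p\mid n-1$: since $n\equiv 1\pmod{p}$, the integer $n$ is invertible in $\Bbbk$, and $c\ne 0$ by assumption, so $cn\ne 0$. The singularity conditions $cn(a_j-a_i)=0$ then force $a_i=a_j$ for all $i,j$, which means $f$ is a scalar multiple of $x_1+\dots+x_n$, i.e. $f=0$ in $\mathfrak{h}^*=S\mathfrak{h}^*[1]$. Therefore $\ker\mathcal{B}[1]=0$ and $\dim\mathcal{L}_{0,c}[1]=n-1$. I do not anticipate a genuine obstacle; the only points requiring care are that the conclusion $f=0$ must be read in the quotient $S\mathfrak{h}^*$ rather than in the full polynomial ring, and that the argument relies essentially on $p\mid n-1$. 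This is precisely what separates the present situation from the Devadas--Sun case $p\mid n$, where $n=0$ in $\Bbbk$ and the degree-$1$ analysis behaves differently.
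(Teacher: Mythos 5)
Your proof is correct and takes essentially the same approach as the paper's: both show $\ker\mathcal{B}[1]=0$ by computing the action of the Dunkl operators $D_{y_i-y_j}$ on a general degree-$1$ element and concluding that all coefficients must coincide, so that $f$ is a multiple of $x_1+\dots+x_n$ and hence zero in $\mathfrak{h}^*$. The differences are cosmetic: the paper substitutes $x_n=x_1+\dots+x_{n-1}$ and normalizes $c=1$, whereas you work modulo the relation, keep $c$ and $n$ symbolic, and spell out both the reduction of $\ker\mathcal{B}[1]$ to singularity and the precise point where $p\mid n-1$ and $c\ne 0$ are used.
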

\begin{proof}
The basis consists of $x_1,x_2,\dots ,x_{n-1}$ after the substitution for $x_n$. Suppose a singular polynomial existed $f=\sum_{i<n}a_ix_i$. Note that $D_{y_i-y_j}x_i=1$ while $D_{y_i-y_j}x_k=0$. Then $D_{y_i-y_n}f=a_i=0$, so $f=0$.
\end{proof}

\begin{prop}
\label{p2.4}
The dimension of $\mathcal{L}_{0,c}[2]$ is $n-1$.
\end{prop}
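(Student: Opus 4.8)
The plan is to bound $\dim\mathcal{L}_{0,c}[2]$ from above and below and show both bounds equal $n-1$. First I would record that $\dim\mathcal{M}_{0,c}[2]=\binom{n}{2}$, since $S\mathfrak{h}^*$ is a polynomial ring in the $n-1$ variables $x_1,\dots,x_{n-1}$ after the substitution $x_n=x_1+\dots+x_{n-1}$. Following the general degree-by-degree strategy, I would set $J[2]=\operatorname{span}\{x_i^2+x_ix_j+x_j^2:i\ne j\}$, which lies in $\ker\mathcal{B}[2]$ by Proposition \ref{p2.1}. Modulo $J[2]$, each relation $x_i^2+x_ix_j+x_j^2\equiv 0$ lets me rewrite every mixed monomial as $x_ix_j\equiv x_i^2+x_j^2$ (recall $\operatorname{char}\Bbbk=2$), so $\mathcal{M}_{0,c}[2]/J[2]$ is spanned by the squares $x_1^2,\dots,x_n^2$. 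Together with the identity $\sum_m x_m^2=(\sum_m x_m)^2=0$ in $S\mathfrak{h}^*$, this gives $\dim(\mathcal{M}_{0,c}[2]/J[2])\le n-1$, and hence $\dim\mathcal{L}_{0,c}[2]\le n-1$.

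For the matching lower bound, the key observation is that $\ker\mathcal{B}[1]=0$ by Proposition \ref{p2.3}, since $\dim\mathcal{L}_{0,c}[1]=n-1=\dim\mathcal{M}_{0,c}[1]$. Because $\ker\mathcal{B}$ is a submodule and the Dunkl operators lower degree by one, any $f\in\ker\mathcal{B}[2]$ satisfies $D_{y_i-y_j}f\in\ker\mathcal{B}[1]=0$; thus $\ker\mathcal{B}[2]$ consists \emph{exactly} of the degree-$2$ singular polynomials. It therefore suffices to show that no nonzero combination of squares is singular, i.e.\ that the subspace $W=\operatorname{span}\{x_1^2,\dots,x_n^2\}\subseteq S\mathfrak{h}^*$ (which is $(n-1)$-dimensional, its only relation being $\sum_m x_m^2=0$) meets the singular polynomials only in $0$. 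This embeds $W$ into $\mathcal{L}_{0,c}[2]$ and yields $\dim\mathcal{L}_{0,c}[2]\ge n-1$, forcing $J[2]=\ker\mathcal{B}[2]$.

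The main computation is to evaluate $D_{y_i-y_j}$ on $f=\sum_m a_m x_m^2$. Using $D_{y_i}(x_i^2)=-\sum_{k\ne i}(x_i+x_k)$ and $D_{y_i}(x_m^2)=x_i+x_m$ for $m\ne i$, I expect to obtain, after reducing modulo $\sum_m x_m=0$, the clean expression $D_{y_i-y_j}f=(A-na_i)x_i-(A-na_j)x_j$ with $A=\sum_m a_m$. Since the only linear relation in $S\mathfrak{h}^*$ is $\sum_k x_k=0$ and $n\ge 3$, vanishing of this form for all pairs forces $A-na_i=0$ for every $i$; as $n$ is odd this reads $a_i=A$, so all $a_i$ are equal and $f=A\sum_m x_m^2=0$. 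This confirms $W\cap\ker\mathcal{B}[2]=0$ and completes the equality $\dim\mathcal{L}_{0,c}[2]=n-1$. I expect the only delicate points to be bookkeeping the relation $\sum_m x_m=0$ consistently—both when reducing the Dunkl output and when verifying $\dim W=n-1$—and invoking $n\equiv 1\pmod 2$ at the right moment; the algebra itself is routine.
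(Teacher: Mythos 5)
Your proof is correct and follows essentially the same route as the paper: both rely on the singular polynomials of Proposition \ref{p2.1} to reduce degree-$2$ elements modulo $\ker\mathcal{B}[2]$ to combinations of squares, and both then show by a direct Dunkl-operator computation that no nonzero combination of squares is singular. The only cosmetic differences are that you keep arbitrary coefficients $a_m\in\Bbbk$ (whereas the paper, having reduced to $\mathbb{F}_2$-coefficients, works with $g=x_1^2+\dots+x_C^2$ and the single operator $D_{y_n-y_1}$), and that you make explicit the observation that $\ker\mathcal{B}[1]=0$ forces $\ker\mathcal{B}[2]$ to consist exactly of singular polynomials, a step the paper uses implicitly.
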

\begin{proof}
After the substitution for $x_n$, we see that $\dim \mathcal{M}_{0,c}[2]=n-1+\binom{n-1}{2}$, with a basis given by $x_i^2$ and $x_ix_j$ for $i,j<n$. The singular polynomials $x_i^2+x_ix_j+x_j^2$ for $i,j\le n-1$ (Proposition \ref{p2.1}) are all linearly independent (each contains a unique $x_ix_j$), hence they span a space of dimension $\binom{n-1}{2}$. Subtracting the dimensions shows that $\dim L_{0,c}[2]\le n-1$. Now suppose there existed another singular polynomial $f$. Substitute for $x_n$ and then remove all $x_ix_j$ terms by adding in $x_i^2+x_ix_j+x_j^2$. This new polynomial is $g=x_1^2+\dots +x_C^2$ after a permutation of indices for some $1\le C<n$. But $$-D_{y_1}g=\sum_{k>C}\frac{1-\sigma_{1k}}{x_1-x_k}g=(n-C)x_1+x_{C+1}+x_{C+2}+\dots +x_n=Cx_1+x_2+\dots +x_C.$$ Then note that $$D_{y_n}g=\sum_{k\ne n}\frac{1-\sigma_{nk}}{x_n-x_k}g=(x_1+\dots +x_C)+Cx_n.$$ Thus $D_{y_n-y_1}g=(C+1)x_1+Cx_n=x_1+C(x_2+\dots +x_{n-1})$, which is never $0$, so $\dim \mathcal{L}_{0,c}[2]=n-1$.
\end{proof}
\begin{prop}
\label{p2.5}
The dimension of $\mathcal{L}_{0,c}[3]$ is $1$.
\end{prop}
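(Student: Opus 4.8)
The plan is to pin down $\ker\mathcal{B}[3]$ exactly, using the inductive structure of the contravariant form together with the description of $\ker\mathcal{B}[2]$ already obtained. First I would record the reduction principle that $f\in\ker\mathcal{B}[3]$ if and only if $D_{y_a-y_b}f\in\ker\mathcal{B}[2]$ for every pair $a,b$: since $\mathcal{B}(w_1w_2w_3,f)=\mathcal{B}(w_1w_2,D_{w_3}f)$ for $w_i\in\mathfrak{h}$, the element $f$ pairs trivially with all of $S\mathfrak{h}[3]$ exactly when each $D_{y_a-y_b}f$ pairs trivially with all of $S\mathfrak{h}[2]$. By the computation in Proposition \ref{p2.4}, $\ker\mathcal{B}[2]$ is spanned by the singular quadratics $x_i^2+x_ix_j+x_j^2$, so the criterion becomes: $f\in\ker\mathcal{B}[3]$ iff $D_{y_a-y_b}f$ is singular for all $a,b$. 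Working modulo $\ker\mathcal{B}[2]$ I may then use the relations $x_ix_j\equiv x_i^2+x_j^2$ in $\mathcal{L}_{0,c}[2]$, together with $\sum_k x_k=0$ and the resulting identity $\sum_k x_k^2=0$ in characteristic $2$.

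Next I would compute the image $\overline{D_{y_a}m}\in\mathcal{L}_{0,c}[2]$ for each of the three types of degree-$3$ monomials. A direct evaluation of $-\sum_{k\ne a}(x_a-x_k)^{-1}(1-\sigma_{ak})$, sorting the sum by whether $k$ or $a$ lies in the support of $m$, should yield the following clean outcome when $n$ is odd: for $m=x_i^3$ one gets $\overline{D_{y_a}m}=0$ for all $a$ (each surviving term is a singular $x_i^2+x_ix_a+x_a^2$); for $m=x_ix_jx_l$ with distinct indices one again gets $\overline{D_{y_a}m}=0$ (the in-support terms carry a factor $n-3\equiv0$, and the out-of-support terms reduce to $2(x_i^2+x_j^2+x_l^2)\equiv0$); and for $m=x_i^2x_j$ with $i\ne j$ one gets the uniform answer $\overline{D_{y_a}m}=\overline{x_a^2}$ for every $a$. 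Consequently, writing $C(f)$ for the sum of the coefficients of all monomials of the shape $x_i^2x_j$ ($i\ne j$) appearing in $f$, I obtain $\overline{D_{y_a}f}=C(f)\,\overline{x_a^2}$, and hence $\overline{D_{y_a-y_b}f}=C(f)\left(\overline{x_a^2}+\overline{x_b^2}\right)$.

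Finally, since $\overline{x_1^2},\dots,\overline{x_{n-1}^2}$ are linearly independent in $\mathcal{L}_{0,c}[2]$, the vanishing of $\overline{D_{y_a-y_b}f}$ for all $a,b$ is equivalent to the single linear condition $C(f)=0$. Thus $\ker\mathcal{B}[3]=\ker C$ is a hyperplane in $\mathcal{M}_{0,c}[3]$, giving $\dim\mathcal{L}_{0,c}[3]=1$; the class of $x_1^2x_2$ (for which $C=1$) is an explicit nonzero representative, and $\overline{D_{y_a-y_b}(x_1^2x_2)}=\overline{x_a^2}+\overline{x_b^2}\ne0$ confirms that it does not lie in $\ker\mathcal{B}$. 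I expect the only real obstacle to be the characteristic-$2$ bookkeeping in the second step: the three monomial types must be shown to collapse in exactly the stated way, and this collapse uses the parity of $n$ (through the factors $n-1$, $n-2$, $n-3$) in an essential manner, so the Dunkl computation must be organized carefully rather than brute-forced.
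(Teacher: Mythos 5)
Your proof is correct, but it takes a genuinely different route from the paper's. The paper constructs explicit elements of $\ker\mathcal{B}[3]$ one family at a time: $x_i^3+x_j^3$ (Lemma~\ref{l2.6}), then $x_i^3$ by a counting argument that splits into the cases $n\equiv 3\pmod 4$ and $n\equiv 1\pmod 4$ (Lemmas~\ref{l2.7} and \ref{l2.8}), then $x_ix_jx_k$ (Lemma~\ref{l2.9}), and finally the equivalence of all $x_i^2x_j$ modulo $\ker\mathcal{B}$ (Lemma~\ref{l2.10}); this yields $\dim\mathcal{L}_{0,c}[3]\le 1$, and a single Dunkl computation on $x_1^2x_2$ gives the lower bound. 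You instead use the adjunction $\mathcal{B}(w_1w_2w_3,f)=\mathcal{B}(w_1w_2,D_{w_3}f)$ to reduce membership in $\ker\mathcal{B}[3]$ to the condition $D_{y_a-y_b}f\in\ker\mathcal{B}[2]$ for all $a,b$, where Proposition~\ref{p2.4} identifies $\ker\mathcal{B}[2]$ as the span of the singular quadratics, and then compute the image of each monomial type in $\mathcal{L}_{0,c}[2]$. Your three computations are correct as claimed: for $x_i^3$ every surviving term is a singular quadratic $x_i^2+x_ix_a+x_a^2$; for $x_ix_jx_l$ the in-support terms carry the factor $n-3\equiv 0$ and the out-of-support terms collapse to $2\left(x_i^2+x_j^2+x_l^2\right)=0$ modulo the relations $x_ix_j\equiv x_i^2+x_j^2$; and for $x_i^2x_j$ one gets $\overline{x_a^2}$ uniformly in $a$ (using $\sum_k x_k=0$ and $n$ odd), including $a=n$ where $\overline{x_n^2}=\sum_{i<n}\overline{x_i^2}$. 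The linear independence of $\overline{x_1^2},\dots,\overline{x_{n-1}^2}$ that you invoke is exactly what Proposition~\ref{p2.4} supplies, since these classes span $\mathcal{L}_{0,c}[2]$ and its dimension is $n-1$. What your route buys: it eliminates the $n\bmod 4$ case split entirely, it produces the upper and lower bounds simultaneously, and it gives a complete linear description $\ker\mathcal{B}[3]=\ker C$, from which the paper's explicit elements ($x_i^3$, $x_ix_jx_k$, $x_i^2x_j+x_a^2x_b$) follow at once because $C$ vanishes on them --- so everything Proposition~\ref{p2.11} later needs from degree $3$ remains available. What the paper's route buys is the explicit kernel elements themselves, verified by direct construction; the only (minor) gap in yours is that the Dunkl evaluations are stated as expected outcomes rather than written out, but they do check out.
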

\begin{proof}
First, consider some of the possible polynomials in $\ker \mathcal{B}$. The polynomials which come from the degree $2$ singular polynomials are of the form $x_i^3+x_i^2x_j+x_ix_j^2$ and $x_i^2x_k+x_ix_jx_k+x_j^2x_k$. We will now build up the polynomials in $\ker\mathcal{B}[3]$.

\begin{lemma}
\label{l2.6}
The polynomials $x_i^3+x_j^3\in \ker \mathcal{B}$.
\end{lemma}
\begin{proof}
We have $(x_i+x_j)\left(x_i^2+x_ix_j+x_j^2\right)=x_i^3+x_j^3\in\ker \mathcal{B}$.
\end{proof}

Next, we split into two cases.
\begin{lemma}
\label{l2.7}
For all $i$, we have $x_i^3\in\ker \mathcal{B}$.
\end{lemma}
\begin{proof}
First suppose that $n\equiv 3\pmod{4}$. Consider the sum $S=\sum_{i=1}^{n-2}\sum_{j=i+1}^{n-1}\left(x_i^3+x_i^2x_j+x_ix_j^2\right)$. Clearly $S\in\ker \mathcal{B}$ and $X=x_1^3+\dots +x_n^3=\left(\sum_{h<n}x_h\right)^3+\sum_{k<n}x_k^3\in\ker \mathcal{B}$. Note that $S=x_1^3+x_3^3+x_5^3+\dots +x_{n-2}^3+X$. Therefore $x_1^3+x_3^3+x_5^3+\dots +x_{n-2}^3\in\ker \mathcal{B}$. But this is a sum of $\frac{n-1}{2}$ cubes, which is odd. Using Lemma~\ref{l2.6} to remove $\frac{n-3}{2}$ of the cubes, we find that $x_i^3\in\ker\mathcal{B}$.

Now suppose that $n\equiv 1\pmod{4}$. Consider $S$ as before, but removing the terms $x_i^3+x_i^2x_j+x_ix_j^2$ for $i,j\in\{n-1,n-2,n-3\}$. Denote this by $T$. Then adding the terms $$\left(x_{n-3}^2x_{n-4}+ x_{n-3}^2x_{n-2} +x_{n-4}x_{n-3}x_{n-2}\right) + \left(x_{n-3}^2x_{n-4} +x_{n-3}^2x_{n-1}+x_{n-3}x_{n-4}x_{n-1}\right) $$$$+ \left(x_{n-2}^2x_{n-4} + x_{n-2}^2x_{n-3} + x_{n-4}x_{n-3}x_{n-2}\right) + \left(x_{n-2}^2x_{n-4} + x_{n-2}^2x_{n-1} + x_{n-4}x_{n-2}x_{n-1}\right) $$$$+ \left(x_{n-1}^2x_{n-4} + x_{n-1}^2x_{n-3} + x_{n-4}x_{n-3}x_{n-1}\right) + \left(x_{n-1}^2x_{n-4}+x_{n-1}^2x_{n-2}+x_{n-4}x_{n-2}x_{n-1}\right) $$$$=x_{n-3}^2x_{n-2} + x_{n-3}^2x_{n-1} + x_{n-2}^2x_{n-3} + x_{n-2}^2x_{n-1}+x_{n-1}^2x_{n-2} + x_{n-1}^2x_{n-3},$$ we obtain $x_1^3+x_3^3+x_5^3+\dots +x_{n-4}^3+X\in\ker \mathcal{B}$. Once again, this yields an odd number of cubes, hence $x_i^3\in\ker \mathcal{B}$.
\end{proof}
\begin{lemma}
\label{l2.9}
The polynomials $x_ix_jx_k\in\ker \mathcal{B}$.
\end{lemma}
\begin{proof}
Consider $$
(x_j^2x_i+ x_j^2x_k +x_ix_jx_k) + (x_j^2x_i +x_j^2x_l+x_jx_ix_l) + (x_k^2x_i + x_k^2x_j + x_ix_jx_k) +$$$$ (x_k^2x_i + x_k^2x_l + x_ix_kx_l) + (x_l^2x_i + x_l^2x_j + x_ix_jx_l) + (x_l^2x_i+x_l^2x_k+x_ix_kx_l) 
$$
$$
=x_j^2x_k + x_j^2x_l + x_k^2x_j + x_k^2x_l+x_l^2x_k + x_l^2x_j
$$
and
$$
(x_ix_jx_k + x_i^2x_j + x_k^2x_j) + (x_ix_jx_k + x_i^2x_k + x_j^2x_k) + (x_ix_jx_k + x_j^2x_i + x_k^2x_i)
$$$$
=x_ix_jx_k + x_i^2x_j + x_i^2x_k + x_j^2x_i + x_j^2x_k + x_k^2x_i + x_k^2x_j.
$$
Both are in the kernel. Subtracting yields $x_ix_jx_k$, which is also in the kernel.
\end{proof}

\begin{lemma}
\label{l2.10}
All monomials of the form $x_i^2x_j$ are equivalent modulo $\ker\mathcal{B}$.
\end{lemma}
\begin{proof}
Fix a monomial $x_a^2x_b$. Then $x_a^2x_b+x_ax_bx_k+x_k^2x_b\in\ker \mathcal{B}$, which implies that $x_a^2x_b+x_k^2x_b\in\ker \mathcal{B}$ for all $k$ (since $x_ax_bx_k\in\ker\mathcal{B}$ from Lemma~\ref{l2.9}). Thus $x_a^2x_b=x_k^2x_b$ in $\mathcal{L}_{0,c}$. Similarly, $x_b^3+x_b^2x_k+x_bx_k^2\in\ker\mathcal{B}$, so $x_b^2x_k=x_bx_k^2$ in $\mathcal{L}_{0,c}$. From these two equalities, we find that $x_a^2x_b=x_i^2x_j$ for all $a,b,i,j$.
\end{proof}
All terms are either of the form $x_i^3$ or $x_i^2x_j$ or $x_ix_jx_k$. Using Lemmas~\ref{l2.7}, \ref{l2.9}, and \ref{l2.10}, we conclude that $\dim\mathcal{L}_{0,c}[3]\le 1$. But we can easily see that the dimension is not zero. If it were, then some $x_i^2x_j$ would necessarily be in $\ker \mathcal{B}$. Without loss of generality, suppose $x_1^2x_2\in\ker \mathcal{B}$. But then $D_{y_3-y_4}x_1^2x_2=x_2x_3-x_2x_4\not\in\ker\mathcal{B}$, so this is impossible.
\end{proof}
\begin{prop}
\label{p2.11}
The dimension of $\mathcal{L}_{0,c}[j]$ is $0$ for $j>3$.
\end{prop}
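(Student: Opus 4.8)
The plan is to show that $\mathcal{L}_{0,c}[j]=0$ for all $j>3$ by proving that $\mathcal{L}_{0,c}[4]=0$ and then leveraging the fact that $\ker\mathcal{B}$ is an ideal. If we can establish that every degree-$4$ monomial lies in $\ker\mathcal{B}$, then since $\ker\mathcal{B}$ is an ideal in $S\mathfrak{h}^*$ (as noted after Definition \ref{df1.5}), every monomial of degree $j\ge 4$ is a multiple of some degree-$4$ monomial and hence also lies in $\ker\mathcal{B}$; this immediately gives $\mathcal{L}_{0,c}[j]=0$ for all $j\ge 4$. So the entire proposition reduces to the single claim $\mathcal{L}_{0,c}[4]=0$.

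To prove $\mathcal{L}_{0,c}[4]=0$, I would proceed monomial-type by monomial-type exactly as in the degree-$3$ analysis of Proposition \ref{p2.5}, showing each degree-$4$ monomial lies in $\ker\mathcal{B}$. First I would exploit the relations already established. From Proposition \ref{p2.1}, multiplying the singular polynomial $x_i^2+x_ix_j+x_j^2$ by any monomial gives elements of $\ker\mathcal{B}$, so for instance $x_k\cdot(x_i^2+x_ix_j+x_j^2)\in\ker\mathcal{B}$. Likewise, multiplying the degree-$3$ kernel elements from Lemmas \ref{l2.6}, \ref{l2.9}, and \ref{l2.10} by a single variable yields many degree-$4$ relations: since $x_ix_jx_k\in\ker\mathcal{B}$ we get $x_\ell x_ix_jx_k\in\ker\mathcal{B}$ for all four-variable products, and since in $\mathcal{L}_{0,c}[3]$ all the monomials $x_i^3$ and $x_i^2x_j$ are identified with a single class, multiplying by $x_\ell$ gives relations such as $x_i^3x_\ell \equiv x_a^2x_bx_\ell$ modulo $\ker\mathcal{B}$.

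The strategy then is to enumerate the degree-$4$ monomial types, namely $x_i^4$, $x_i^3x_j$, $x_i^2x_j^2$, $x_i^2x_jx_k$, and $x_ix_jx_kx_\ell$, and show each vanishes in $\mathcal{L}_{0,c}$. The monomials with three or four distinct variables should fall quickly from multiplying $x_ix_jx_k\in\ker\mathcal{B}$ by a variable. For the remaining types I would combine the multiplied-up relations with new identities obtained by applying the factorization trick: since $x_i^3\in\ker\mathcal{B}$ (Lemmas \ref{l2.7}, \ref{l2.8}), we get $x_ix_i^3=x_i^4\in\ker\mathcal{B}$ and $x_jx_i^3=x_i^3x_j\in\ker\mathcal{B}$ directly, and the $p=2$ identity $(x_i+x_j)(x_i^3+x_j^3)=x_i^4+x_i^3x_j+x_ix_j^3+x_j^4$ together with these gives control over $x_i^2x_j^2$ after using $x_i^3x_j\in\ker\mathcal{B}$. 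The main obstacle I anticipate is the same parity/counting subtlety that forced the split into the two cases $n\equiv 1$ and $n\equiv 3\pmod 4$ in degree $3$: establishing that the individual cubes lie in $\ker\mathcal{B}$ required a delicate summation argument, and if any degree-$4$ type cannot be reduced purely by multiplying known kernel elements, I would need an analogous global summation over all index pairs (using that $\sum x_i=0$ after the substitution $x_n=x_1+\dots+x_{n-1}$) to close the gap. Since $x_i^3\in\ker\mathcal{B}$ is already available unconditionally for all $n\equiv 1\pmod 2$, however, I expect the degree-$4$ case to be strictly easier than degree $3$, because multiplication by a variable immediately transports the hardest-won degree-$3$ facts upward, and no fresh parity obstruction should arise.
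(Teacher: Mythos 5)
Your overall plan is exactly the paper's: reduce to showing $\ker\mathcal{B}[4]=\mathcal{M}_{0,c}[4]$ and then invoke the ideal property of $\ker\mathcal{B}$, checking the five monomial types $x_i^4$, $x_i^3x_j$, $x_i^2x_j^2$, $x_i^2x_jx_k$, $x_ix_jx_kx_\ell$. Four of the five cases you handle correctly and identically to the paper: monomials with at least three distinct variables come from $x_ix_jx_k\in\ker\mathcal{B}$ (Lemma \ref{l2.9}), and $x_i^4$, $x_i^3x_j$ come from $x_i^3\in\ker\mathcal{B}$ (Lemmas \ref{l2.7}, \ref{l2.8}).

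However, your argument for the one remaining type, $x_i^2x_j^2$, fails as stated. In characteristic $2$ one has
\begin{equation*}
(x_i+x_j)\left(x_i^3+x_j^3\right)=x_i^4+x_i^3x_j+x_ix_j^3+x_j^4,
\end{equation*}
which contains no $x_i^2x_j^2$ term at all; every monomial on the right is already known to lie in $\ker\mathcal{B}$, so this identity is vacuous and gives no control over $x_i^2x_j^2$. The auxiliary claim you lean on --- that ``$x_i^3$ and $x_i^2x_j$ are identified with a single class'' in $\mathcal{L}_{0,c}[3]$ --- is also false and contradicts Proposition \ref{p2.5}: $x_i^3$ lies in $\ker\mathcal{B}$ (the zero class), while $x_i^2x_j$ spans the one-dimensional nonzero space $\mathcal{L}_{0,c}[3]$; if the two were identified, $\mathcal{L}_{0,c}[3]$ would vanish. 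The repair is the step the paper actually takes, and it uses only ingredients you already cite in passing: Lemma \ref{l2.10} gives $x_i^2x_j+x_ix_j^2\in\ker\mathcal{B}$, and multiplying by $x_i$ yields $x_i^3x_j+x_i^2x_j^2\in\ker\mathcal{B}$; since $x_i^3x_j\in\ker\mathcal{B}$, so is $x_i^2x_j^2$. (Your anticipated parity obstruction requiring a global summation as in degree $3$ does not materialize; no such argument is needed in degree $4$.)
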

\begin{proof}
We show that $\ker\mathcal{B}[4]$ contains all polynomials. It suffices to check for the appearance of all monomials of the form $x_i^4,x_i^3x_j,x_i^2x_j^2,x_i^2x_jx_k,x_ix_jx_kx_{\ell}$ in $\ker\mathcal{B}$. We can obtain any monomial with at least $3$ terms from $x_ix_jx_k\in\ker \mathcal{B}[3]$ (\ref{l2.9}). We also can obtain any monomial of the form $x_i^4$ or $x_i^3x_j$ because $x_i^3\in\ker \mathcal{B}[3]$ (\ref{l2.7},\ref{l2.8}). It remains to show that we can obtain all monomials of the form $x_i^2x_j^2$. But we know that $x_i^2x_j+x_ix_j^2\in\ker \mathcal{B}[3]$ (\ref{l2.10}). Multiplying by $x_i$ yields $x_i^3x_j+x_i^2x_j^2\in\ker \mathcal{B}[4]$, and since $x_i^3x_j\in\ker \mathcal{B}[4]$, then so is $x_i^2x_j^2$, and we find that $\dim \mathcal{L}_{0,c}[4]=0$. By the properties of the contravariant form (particularly that $\ker\mathcal{B}$ is an ideal), the Proposition follows as well.
\end{proof}
Combining Propositions~\ref{p2.2}, \ref{p2.3}, \ref{p2.4}, \ref{p2.5}, and \ref{p2.11}, we conclude with the Hilbert series.
\begin{theorem}
\label{th2.12}
The Hilbert series for $\mathcal{L}_{0,c}$ when $p=2$ is $h_{\mathcal{L}_{0,c}}(z)=(1+z)\left(1+(n-2)z+z^2\right)$.
\end{theorem}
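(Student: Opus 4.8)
The plan is to assemble the graded dimensions already computed in Propositions \ref{p2.2} through \ref{p2.11} and to recognize the resulting polynomial. By Definition \ref{d1.11} the Hilbert series is $h_{\mathcal{L}_{0,c}}(z)=\sum_{i\ge 0}\dim\mathcal{L}_{0,c}[i]\,z^i$, so the entire content of the theorem reduces to reading off each graded piece and verifying a single algebraic factorization.

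First I would record the dimensions degree by degree. Proposition \ref{p2.2} gives $\dim\mathcal{L}_{0,c}[0]=1$; Proposition \ref{p2.3} gives $\dim\mathcal{L}_{0,c}[1]=n-1$; Proposition \ref{p2.4} gives $\dim\mathcal{L}_{0,c}[2]=n-1$; Proposition \ref{p2.5} gives $\dim\mathcal{L}_{0,c}[3]=1$; and Proposition \ref{p2.11} gives $\dim\mathcal{L}_{0,c}[j]=0$ for every $j>3$. Since all higher graded pieces vanish, the Hilbert series is a genuine polynomial of degree $3$, and substituting these values into the definition yields
$$h_{\mathcal{L}_{0,c}}(z)=1+(n-1)z+(n-1)z^2+z^3.$$

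Next I would confirm the claimed product form by direct expansion. One checks that $(1+z)\left(1+(n-2)z+z^2\right)=1+(n-2)z+z^2+z+(n-2)z^2+z^3=1+(n-1)z+(n-1)z^2+z^3$, which agrees with the polynomial above; this completes the argument. The factored form is the more natural one, since the factor $(1+z)=[p]_z$ for $p=2$ is exactly the factor predicted by Conjecture \ref{conj} with $r=1$, while $1+(n-2)z+z^2=Q_1(n,z)$.

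Because every hard computation---namely the explicit identification of $\ker\mathcal{B}[i]$ in each degree using the Dunkl operators together with the singular polynomials $x_i^2+x_ix_j+x_j^2$ from Proposition \ref{p2.1}---has already been carried out in the propositions, there is no serious obstacle left at the level of the theorem itself. The only point deserving any care is that the vanishing at degree $4$ must propagate to all $j>4$; this is precisely where one invokes that $\ker\mathcal{B}$ is an ideal of the polynomial algebra, so that once $\mathcal{L}_{0,c}[4]=0$ the quotient cannot be nonzero in any higher degree. With that observation the list of graded dimensions is complete and the Hilbert series is established.
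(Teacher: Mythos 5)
Your proposal is correct and follows essentially the same route as the paper: Theorem \ref{th2.12} is proved there exactly by collecting the graded dimensions from Propositions \ref{p2.2}, \ref{p2.3}, \ref{p2.4}, \ref{p2.5}, and \ref{p2.11} and matching coefficients against the expansion $(1+z)\left(1+(n-2)z+z^2\right)=1+(n-1)z+(n-1)z^2+z^3$. Your added remarks on the ideal property of $\ker\mathcal{B}$ and the agreement with Conjecture \ref{conj} are consistent with the paper (the ideal argument is already invoked inside Proposition \ref{p2.11}).
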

\begin{proof}
By expanding, we may compare coefficients and verify that they match.
\end{proof}

\subsection{Characteristic $p$ is odd}
We will now study the case where the characteristic $p$ is odd. We will frequently use the substitution $x_n=-x_1-x_2-\dots -x_{n-1}$.
\begin{prop}
\label{p2.13}
The dimension of $\mathcal{L}_{0,c}[0]$ is $1$.
\end{prop}
\begin{proof}
All elements are constants.
\end{proof}
\begin{prop}
\label{p2.14}
The dimension of $\mathcal{L}_{0,c}[1]$ is $n-1$.
\end{prop}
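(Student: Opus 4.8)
The plan is to mirror the degree-one argument already given for $p=2$ in Proposition~\ref{p2.3}, adapting the constants to odd characteristic. Since $\mathcal{L}_{0,c}[1]=\mathcal{M}_{0,c}[1]/(\ker\mathcal{B})[1]$ and $\dim\mathcal{M}_{0,c}[1]=n-1$ (with basis $x_1,\dots,x_{n-1}$ after the substitution $x_n=-x_1-\dots-x_{n-1}$), it suffices to prove $(\ker\mathcal{B})[1]=0$. In degree one the contravariant form only pairs $\mathfrak{h}^*$ against $\mathfrak{h}$, and by the description $\mathcal{B}(y,q)=[x^0]D_y(q)$ a linear polynomial $q$ lies in $\ker\mathcal{B}$ precisely when $D_{y_i-y_j}q=0$ for all $i,j$; that is, $(\ker\mathcal{B})[1]$ consists exactly of the singular polynomials of degree one.

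First I would record the action of the Dunkl operators on the linear generators. A direct computation from the $t=0$ formula for $D_{y_i-y_j}$ gives
$$D_{y_i-y_j}x_i=-cn,\qquad D_{y_i-y_j}x_j=cn,\qquad D_{y_i-y_j}x_k=0 \text{ for } k\ne i,j.$$
The crucial observation is that $cn\ne 0$ in $\Bbbk$: we have $c\ne 0$ by hypothesis, and since $p\mid n-1$ we get $n\equiv 1\pmod{p}$, so in particular $p\nmid n$ and $cn\equiv c\not\equiv 0$. This is exactly the point at which the standing assumption $p\mid n-1$ is used.

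Then I would take a putative singular polynomial $f=\sum_{i=1}^{n-1}a_ix_i$ (written in the basis obtained after substituting for $x_n$) and apply $D_{y_i-y_n}$ for each $i<n$. Because the substituted expression contains no $x_n$ term, the only surviving contribution comes from $x_i$, giving $D_{y_i-y_n}f=-cn\,a_i$. As $cn\ne 0$, the singularity condition forces $a_i=0$ for every $i<n$, hence $f=0$. Therefore $(\ker\mathcal{B})[1]=0$ and $\dim\mathcal{L}_{0,c}[1]=n-1$.

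There is essentially no obstacle here beyond bookkeeping; the one thing to be careful about is the evaluation of $cn$ modulo $p$, which is where the hypotheses $c\ne 0$ and $p\mid n-1$ genuinely enter (the same argument would fail verbatim in the Devadas--Sun regime $p\mid n$, where $cn\equiv 0$). The computation of the Dunkl action on $x_i,x_j,x_k$ is routine but should be carried out explicitly to confirm the coefficient $-cn$.
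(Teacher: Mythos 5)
Your proof is correct and takes essentially the same approach as the paper: both apply the operators $D_{y_i-y_n}$ to a putative singular linear polynomial $f=\sum_{i<n}a_ix_i$ and use $n\equiv 1\pmod p$ (equivalently $cn\equiv c\ne 0$) to force every $a_i=0$. The only cosmetic difference is that the paper normalizes $c=1$ and computes $D_{y_n}f$ and $D_{y_1}f$ separately before combining, so it extracts $a_i$ rather than your $-cn\,a_i$; the underlying computation is identical.
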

\begin{proof}
After the substitution for $x_n$, we may assume some $f=\sum_{i<n}a_ix_i$ is singular. Note that $D_{y_n}f=\sum a_i$ and $-D_{y_1}f=(n-1)a_1-(a_2+a_3+\dots +a_{n-1})=a_1-\sum_{i<n}a_i$. Then $D_{y_n-y_i}=a_i=0$, hence $f=0$ and all $x_i$ for $i<n$ span $\mathcal{L}_{0,c}[1]$.
\end{proof}
\begin{prop}
\label{p2.15}
For distinct $i,j,k\in [n]$, the polynomials $(x_j-x_k)(x_i-x_j-x_k)$ are singular.
\end{prop}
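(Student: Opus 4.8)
The plan is to reduce to a single representative by symmetry, and then compute the Dunkl operators directly via divided differences. First I would observe that $S_n$ permutes the polynomials in question, since $\sigma\cdot(x_j-x_k)(x_i-x_j-x_k)=(x_{\sigma j}-x_{\sigma k})(x_{\sigma i}-x_{\sigma j}-x_{\sigma k})$, and the Dunkl operators satisfy $\sigma D_{y_a}\sigma^{-1}=D_{y_{\sigma a}}$, so being singular is an $S_n$-invariant condition. As $S_n$ acts transitively on ordered triples of distinct indices, it suffices to treat one representative; I would take $f=(x_2-x_3)(x_1-x_2-x_3)=x_1x_2-x_1x_3-x_2^2+x_3^2$. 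Moreover, since $D_{y_a-y_b}=D_{y_a}-D_{y_b}$, the polynomial $f$ is singular exactly when $D_{y_a}f$ is independent of $a$, and I will in fact show each $D_{y_a}f$ vanishes in $S\mathfrak{h}^*$.

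For the computation I would write the Dunkl operator (with $t=0$, $c=1$) as $D_{y_a}f=-\sum_{k\ne a}\partial_{ak}f$, where $\partial_{ak}f=(f-\sigma_{ak}f)/(x_a-x_k)$ is the divided difference, and record the antisymmetry $\partial_{ka}f=-\partial_{ak}f$, which is the sign bookkeeping one must respect. I would then split the sum according to whether the indices lie in $\{1,2,3\}$ (those appearing in $f$) or not. The divided differences inside $\{1,2,3\}$ are computed directly; the convenient identity is $\sigma_{23}f=-f$, giving $\partial_{23}f=2(x_1-x_2-x_3)$. For each $k\notin\{1,2,3\}$ one finds $\partial_{1k}f=x_2-x_3$, $\partial_{2k}f=x_1-x_2-x_k$, and $\partial_{3k}f=x_3+x_k-x_1$, while $\partial_{ak}f=0$ whenever both $a,k\notin\{1,2,3\}$, since then $\sigma_{ak}f=f$.

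Summing the contributions, the $n-3$ ``outside'' indices each contribute a copy of a linear term, and combining these with the three ``inside'' divided differences produces
\begin{align*}
D_{y_1}f &= -(n-1)(x_2-x_3), & D_{y_2}f &= -(n-1)(x_1-x_2)+\textstyle\sum_\ell x_\ell,\\
D_{y_3}f &= -(n-1)(x_3-x_1)-\textstyle\sum_\ell x_\ell, & D_{y_a}f &= 0 \quad (a\ge 4).
\end{align*}
Since $\sum_\ell x_\ell=0$ in $S\mathfrak{h}^*$ and $p\mid n-1$, each coefficient $n-1$ vanishes modulo $p$, so all $D_{y_a}f=0$ and $f$ is singular.

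The main obstacle is purely organizational rather than conceptual: one must keep the antisymmetry signs $\partial_{ka}f=-\partial_{ak}f$ straight, and correctly collect the multiplicity-$(n-3)$ contributions from the outside indices, because it is exactly their combination with the inside terms that manufactures the factor $n-1$ that $p$ annihilates. The leftover $\sum_\ell x_\ell$ terms are harmless once one remembers that they are zero in $S\mathfrak{h}^*=\Bbbk[x_1,\dots,x_n]/(x_1+\dots+x_n)$.
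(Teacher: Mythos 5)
Your proof is correct, and it is essentially the paper's own argument: reduce by $S_n$-symmetry to the representative $f=(x_2-x_3)(x_1-x_2-x_3)$, compute each $D_{y_a}f$ directly, and observe that every term is annihilated by $p\mid n-1$ together with the relation $x_1+\dots+x_n=0$ in $S\mathfrak{h}^*$. Your divided-difference bookkeeping is just a cleaner way of organizing the same term-by-term computation the paper performs, and your intermediate values (e.g. $-D_{y_1}f=(n-1)(x_2-x_3)$, $-D_{y_2}f=(n-1)(x_1-x_2)-\sum_\ell x_\ell$, and $D_{y_a}f=0$ for $a\ge 4$) agree with the paper's exactly.
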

\begin{proof}
Without loss of generality, assume $i=1$, $j=2$, and $k=3$, and denote $f=(x_2-x_3)(x_1-x_2-x_3)$. Then it suffices to check the action of the Dunkl operators $D_{y_1-y_r}$ for $r=2,3,4\dots ,n$.

Notice that the cases $r=2$ and $r=3$ are the same, because the polynomial $(x_2-x_3)(x_1-x_2-x_3)$ is invariant under the operator $-\sigma_{23}$, and since the Dunkl operator is linear, they yield the same result.

First, note that $$-D_{y_1}f=\left(x_1+x_2-x_3\right)+\left(x_2-x_1-x_3\right)+(x_2-x_3)\sum_{s\ne 1,2,3}\frac{x_1-x_s}{x_1-x_s},$$$$=x_1+x_2-x_3+x_2-x_1-x_3+(n-3)(x_2-x_3)=(n-1)(x_2-x_3)=0.$$ Now we check the action of $D_{y_2}$. We find that $$-D_{y_2}f=\left(x_3-x_1-x_2\right)+2\left(x_1-x_2-x_3\right)+\sum_{s\ne 1,2,3}(x_1-x_2-x_k),$$$$=(n-1)(x_1-x_2)-\sum_{s\in [n]}x_s=0.$$ Finally, it remains to check $D_{y_r}f$ for $r>3$. Note that this leaves $$D_{y_r}f=(x_3-x_2)+(x_r+x_2-x_1)+(x_1-x_3-x_r)=0.$$ Thus $D_{y_i}f=0$ for all $i$, which implies that $(x_j-x_k)(x_i-x_j-x_k)$ is singular.
\end{proof}
\begin{prop}
\label{p2.16}
The following is a basis for the degree $2$ singular polynomials:
\begin{itemize}
    \item $(x_1-x_i)(x_2-x_1-x_i)$ for $i=3,4,\dots , n-1$
    \item $(x_j-x_2)(x_i-x_j-x_2)$ for all (unordered) combinations of $i\ne j$ with $i,j\le n-1$.
\end{itemize}
\end{prop}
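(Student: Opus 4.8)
My plan is to verify three properties of the listed family: that each polynomial is singular, that the family is linearly independent, and that it spans the full space of degree-$2$ singular polynomials; the first two are quick, and essentially all the work is in the third. Singularity is immediate from Proposition \ref{p2.15}: the first family $(x_1-x_i)(x_2-x_1-x_i)$ is its polynomial with special index $2$ and pair $\{1,i\}$, and the second family $(x_j-x_2)(x_i-x_j-x_2)$ is its polynomial with special index $i$ and pair $\{j,2\}$. For the latter to have three distinct indices one needs $i,j\ne 2$ and $i\ne j$, so the unordered pairs $(i,j)$ in fact run over $\{1,3,\dots,n-1\}$. This yields $n-3$ polynomials in the first family and $\binom{n-2}{2}$ in the second, for a total of $(n-3)+\binom{n-2}{2}=\tfrac{n(n-3)}{2}=\binom{n}{2}-n$.

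For linear independence I would expand, using $(x_b-x_c)(x_a-x_b-x_c)=x_ax_b-x_ax_c+x_c^2-x_b^2$, so the first family becomes $A_i=x_1x_2-x_2x_i+x_i^2-x_1^2$ and the second becomes $B_{\{i,j\}}=x_ix_j-x_ix_2+x_2^2-x_j^2$. In a relation $\sum_i\alpha_iA_i+\sum_{\{i,j\}}\beta_{\{i,j\}}B_{\{i,j\}}=0$, every cross term of an $A_i$ and every cross term of a $B$ other than $x_ix_j$ involves the index $2$, so the monomial $x_ix_j$ with $i,j\in\{1,3,\dots,n-1\}$ occurs in exactly one basis element, namely $B_{\{i,j\}}$; this forces all $\beta_{\{i,j\}}=0$. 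Then $x_i^2$ with $i\ge 3$ occurs only in $A_i$, forcing all $\alpha_i=0$. Hence the family is independent and the space of degree-$2$ singular polynomials has dimension at least $\binom{n}{2}-n$.

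The heart of the argument is the matching upper bound, which I would get by computing the singular space outright. The first move is a reduction: since $\sum_aD_{y_a}=0$ on $S\mathfrak{h}^*$ and $p\nmid n$ (because $n\equiv 1\nmod{p}$), if $f$ is singular then the common value $g=D_{y_a}f$ satisfies $ng=\sum_aD_{y_a}f=0$, so $g=0$; thus $f$ is singular if and only if $D_{y_a}f=0$ for every $a$. Working modulo $x_1+\dots+x_n=0$ and using $n\equiv 1$, the operators act on degree-$2$ monomials by $D_{y_a}(x_a^2)=x_a$, $D_{y_a}(x_b^2)=x_a+x_b$, $D_{y_a}(x_ax_b)=x_b$, and $D_{y_a}(x_bx_c)=x_b+x_c$ for distinct indices different from $a$. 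Writing a representative $f=\sum_ap_ax_a^2+\sum_{b<c}q_{bc}x_bx_c$ and collecting coefficients, the congruence $n\equiv 1$ cancels the terms that would otherwise be $-(n-1)p_ax_a$ and $-(n-1)q_{ad}x_d$, leaving $D_{y_a}f\equiv Px_a+\sum_{d\ne a}(p_d+Q_d)x_d$ modulo $x_1+\dots+x_n$, where $P=\sum_bp_b$ and $Q_d=\sum_{c\ne d}q_{dc}$. Vanishing of this in $S\mathfrak{h}^*$ for every $a$ is exactly the system of $n$ equations $p_d+Q_d=P$ for $d\in[n]$. These have rank $n$, so they cut out a subspace of dimension $\binom{n+1}{2}-n=\binom{n}{2}$ in the space of degree-$2$ representatives; since this subspace contains the $n$-dimensional kernel of reduction modulo $x_1+\dots+x_n$, the singular space in $S\mathfrak{h}^*$ has dimension $\binom{n}{2}-n$. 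Combined with independence, the listed family is a basis.

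I expect the main obstacle to be the careful execution of this final count inside the quotient $S\mathfrak{h}^*\cong\Bbbk[x_1,\dots,x_n]/(x_1+\dots+x_n)$: one must confirm that the $n$ conditions $p_d+Q_d=P$ really have rank $n$ (using that $p$ is odd, so that the associated row-sum map $q\mapsto(Q_d)$ on $K_n$ is surjective), note that producing these clean conditions at all is exactly where $n\equiv 1\nmod{p}$ is essential, and check that the reduction kernel lies inside the solution space so the subtraction of dimensions is valid. A structural alternative for the spanning step is to use the relations $P_{a;\{b,c\}}+P_{b;\{c,a\}}+P_{c;\{a,b\}}=0$ and $P_{a;\{b,c\}}-P_{a';\{b,c\}}=(x_a-x_{a'})(x_b-x_c)$ among the Proposition \ref{p2.15} polynomials $P_{a;\{b,c\}}=(x_b-x_c)(x_a-x_b-x_c)$ to rewrite every such polynomial in terms of the listed family; this still needs the dimension count to guarantee that the $P_{a;\{b,c\}}$ exhaust all singular polynomials, so the count seems unavoidable.
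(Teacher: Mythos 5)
Your proposal is correct, and it reaches the conclusion by a genuinely different route from the paper. The paper's proof is purely identity-based: after substituting for $x_n$ (justified by the relation $\sum_{j,k\ne i\le n-1}(x_j-x_k)(x_i-x_j-x_k)=0$), it uses Lemmas \ref{l2.17} and \ref{l2.18} together with the identity $(x_j-x_k)(x_i-x_j-x_k)=(x_j-x_2)(x_i-x_j-x_2)-(x_k-x_2)(x_i-x_k-x_2)$ to show that the listed family generates every polynomial of Proposition \ref{p2.15}; it does not spell out linear independence, and the fact that the Proposition \ref{p2.15} polynomials exhaust \emph{all} degree-$2$ singular polynomials is only closed later, in Proposition \ref{p2.19}, where the Dunkl computation forces $\dim\mathcal{L}_{0,c}[2]=n$ and hence pins down $\dim\ker\mathcal{B}[2]$. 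You instead prove independence by direct monomial inspection (the cross terms $x_ix_j$ avoiding the index $2$ isolate the $B_{\{i,j\}}$, and the $x_i^2$ then isolate the $A_i$) and, more substantively, you compute the full degree-$2$ singular space outright: the reduction to $D_{y_a}f=0$ for all $a$ via $\sum_a D_{y_a}=0$ and $p\nmid n$, the explicit action on monomials, and the linear system $p_d+Q_d=P$ give dimension exactly $\binom{n}{2}-n$. The rank-$n$ claim you flag does hold: a relation $\sum_d\lambda_d\bigl(p_d+Q_d-P\bigr)=0$ forces $\lambda_b+\lambda_c=0$ for all $b<c$ (from the $q_{bc}$ coefficients) and all $\lambda_d$ equal (from the $p_e$ coefficients), hence $2\lambda_d=0$ and $\lambda_d=0$ since $p$ is odd, which is exactly where the standing assumption of this subsection enters. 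What each approach buys: the paper's argument is shorter and avoids linear algebra, but taken on its own it only shows the family spans the Proposition \ref{p2.15} polynomials, deferring both independence and completeness; yours is self-contained, yields the upper bound $\dim\mathcal{L}_{0,c}[2]\le n$ needed in Proposition \ref{p2.19} as a byproduct, and makes explicit where $n\equiv 1\pmod{p}$ and odd $p$ are used. Your reading of the second family (unordered pairs drawn from $\{1,3,\dots,n-1\}$) is also the interpretation consistent with the count $\tfrac{n(n-3)}{2}$ that the paper itself uses in Proposition \ref{p2.19}.
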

\begin{proof}
Perform the substitution for $x_n$ and consider only indices between $1$ and $n-1$ inclusive. Notice that this does not affect singular polynomials which depend on $x_n$, since those are in fact a combination of singular polynomials without an $x_n$:
$$\sum_{j,k\ne i\le n-1}(x_j-x_k)(x_i-x_j-x_k)=\sum_{j,k\ne i\le n-1}(x_i-x_j)(x_k-x_i-x_j)=0.$$
\begin{lemma}
\label{l2.17}
We can obtain all singular polynomials of the form $(x_j-x_2)(x_i-x_j-x_2)$ for $3\le i,j\le n-1$ using the aforementioned basis.
\end{lemma}
\begin{proof}
Suppose for a given (unordered) pair $(i,j)$ the (ordered) polynomial $(x_j-x_2)(x_i-x_j-x_2)$ is part of the basis. Then we obtain the alternate polynomial via $$(x_i-x_2)(x_j-x_i-x_2)=(x_1-x_j)(x_2-x_1-x_j)-(x_1-x_i)(x_2-x_1-x_j)-(x_j-x_2)(x_i-x_j-x_2).$$
\end{proof}
\begin{lemma}
\label{l2.18}
We can obtain all singular polynomials containing an $x_2$ using the aforementioned basis.
\end{lemma}
\begin{proof}
Lemma~\ref{l2.17} yields all singular polynomials containing an $x_2$ but not an $x_1$. So now assume $j=1$. We show that we can obtain the polynomials $(x_j-x_2)(x_1-x_j-x_2)$ and $(x_1-x_2)(x_j-x_1-x_2)$. But $$(x_1-x_2)(x_j-x_1-x_2)=(x_j-x_2)(x_1-x_j-x_2)+(x_1-x_j)(x_2-x_1-x_j),$$ and two of those polynomials are already in the basis, hence all three are generated by the basis.
\end{proof}
It suffices to note that $$(x_j-x_k)(x_i-x_j-x_k)=(x_j-x_2)(x_i-x_j-x_2)-(x_k-x_2)(x_i-x_k-x_2).$$ The proof now follows from Lemmas~\ref{l2.17} and \ref{l2.18}.
\end{proof}
\begin{prop}
\label{p2.19}
The dimension of $\mathcal{L}_{0,c}[2]$ is $n$.
\end{prop}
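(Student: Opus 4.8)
The plan is to identify $\ker\mathcal{B}[2]$ exactly with the space of degree-$2$ singular polynomials, and then to read off its dimension from the basis in Proposition \ref{p2.16}. The crucial structural point is that $\ker\mathcal{B}$ is a subrepresentation of $\mathcal{M}_{0,c}$, so it is closed under the action of $\mathfrak{h}$, which is realized by the Dunkl operators $D_{y_i-y_j}$. These operators lower degree by one, and $\ker\mathcal{B}[1]=0$: indeed $\dim\mathcal{M}_{0,c}[1]=\dim\mathfrak{h}^*=n-1$, which by Proposition \ref{p2.14} equals $\dim\mathcal{L}_{0,c}[1]$. Hence for any $f\in\ker\mathcal{B}[2]$ and all $i,j$ we have $D_{y_i-y_j}f\in\ker\mathcal{B}[1]=0$, so $f$ is singular; conversely every singular polynomial lies in $\ker\mathcal{B}$. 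Therefore $\ker\mathcal{B}[2]$ is precisely the span of the degree-$2$ singular polynomials described in Proposition \ref{p2.16}.

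I would then compute the dimension of this space by counting the basis of Proposition \ref{p2.16}: there are $n-3$ polynomials of the first type $(x_1-x_i)(x_2-x_1-x_i)$, $i=3,\dots,n-1$, and $\binom{n-2}{2}$ of the second type $(x_j-x_2)(x_i-x_j-x_2)$, indexed by unordered pairs $\{i,j\}$ drawn from the $n-2$ indices $\{1,3,4,\dots,n-1\}$. To be sure this count is honest I would verify linear independence directly. After the substitution $x_n=-(x_1+\dots+x_{n-1})$ the monomials $x_ax_b$ with $a\le b\le n-1$ form a basis of $S\mathfrak{h}^*[2]$, and each listed polynomial carries a distinguished monomial in this basis: the second-type polynomial $(x_j-x_2)(x_i-x_j-x_2)=x_ix_j-x_2x_i-x_j^2+x_2^2$ is the only listed element containing the ``pure mixed'' monomial $x_ix_j$ (neither index equal to $2$), while the first-type polynomial $(x_1-x_i)(x_2-x_1-x_i)=x_1x_2-x_2x_i-x_1^2+x_i^2$ is, among the first-type list, the only one containing $x_2x_i$. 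Thus in any vanishing linear combination the pure-mixed monomials force all second-type coefficients to vanish, and then the monomials $x_2x_i$ force the first-type coefficients to vanish. This yields $\dim\ker\mathcal{B}[2]=(n-3)+\binom{n-2}{2}=\tfrac{n(n-3)}{2}$.

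Finally, since $\dim\mathcal{M}_{0,c}[2]=\dim S^2\mathfrak{h}^*=\binom{n}{2}$, I conclude
$$\dim\mathcal{L}_{0,c}[2]=\binom{n}{2}-\frac{n(n-3)}{2}=\frac{n(n-1)-n(n-3)}{2}=n.$$
The only real work is the bookkeeping in the middle paragraph --- confirming that the list of Proposition \ref{p2.16} is genuinely linearly independent with the stated cardinality --- since the reduction $\ker\mathcal{B}[2]=\{\text{degree-}2\text{ singular polynomials}\}$ is immediate once $\ker\mathcal{B}[1]=0$ is known, and this reduction is exactly what makes degree $2$ cleaner than the higher degrees (where $\ker\mathcal{B}[d-1]\ne 0$ forces the more elaborate complement-plus-Dunkl-operator method advertised at the start of the section). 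I expect the main obstacle to be purely the combinatorial care needed to see that the two families in Proposition \ref{p2.16} overlap only in shared $x_2x_i$ monomials, which is handled by performing the independence check in the two-step order above rather than all at once.
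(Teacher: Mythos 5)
Your reduction of $\ker\mathcal{B}[2]$ to the space of degree-$2$ singular polynomials is correct and clean: $\ker\mathcal{B}$ is a graded subrepresentation, the Dunkl operators map $\ker\mathcal{B}[2]$ into $\ker\mathcal{B}[1]$, and $\ker\mathcal{B}[1]=0$ by Proposition \ref{p2.14}, so every element of $\ker\mathcal{B}[2]$ is singular; the converse inclusion is standard. Your linear independence check of the list in Proposition \ref{p2.16} is also correct, and it actually makes rigorous the count $|\text{basis}|=\tfrac{n^2-3n}{2}$ that the paper uses without justification. The genuine gap is the step ``therefore $\ker\mathcal{B}[2]$ is precisely the span of the polynomials of Proposition \ref{p2.16}.'' This invokes Proposition \ref{p2.16} in its strong reading --- that the list spans \emph{all} degree-$2$ singular polynomials --- but the paper's proof of that proposition only shows the list spans the particular singular polynomials constructed in Proposition \ref{p2.15}, i.e.\ the family $(x_j-x_k)(x_i-x_j-x_k)$. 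Nothing established before Proposition \ref{p2.19} rules out a degree-$2$ singular polynomial lying outside this span, and ruling that out is exactly the nontrivial half of the proposition you are proving; leaning on the word ``basis'' there is circular within the paper's logical development.

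The paper's own proof supplies precisely this missing ingredient. It first notes that $x_1^2,\dots,x_{n-1}^2,x_1x_2$ together with the known singular polynomials span $\mathcal{M}_{0,c}[2]$, which gives the upper bound $\dim\mathcal{L}_{0,c}[2]\le n$ --- the same direction your dimension count $\binom{n}{2}-\tfrac{n(n-3)}{2}=n$ delivers, since containment of the span in $\ker\mathcal{B}[2]$ plus independence only bounds $\dim\ker\mathcal{B}[2]$ from below. It then computes $D_{y_1-y_2}$ on each of $x_1^2$, $x_2^2$, $x_r^2$, $x_1x_2$ and uses $S_n$-symmetry to show that no nontrivial linear combination $a_1x_1^2+\dots+a_{n-1}x_{n-1}^2-bx_1x_2$ is singular; by your own reduction this is equivalent to showing no such combination lies in $\ker\mathcal{B}[2]$, which forces $\ker\mathcal{B}[2]$ to equal the span of the known singular polynomials and yields $\dim\mathcal{L}_{0,c}[2]\ge n$. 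As written, your argument proves only $\dim\mathcal{L}_{0,c}[2]\le n$; to close it you must add this Dunkl-operator computation (or an equivalent verification that there are no further degree-$2$ singular polynomials).
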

\begin{proof}
From the basis in Proposition~\ref{p2.16}, we know that $\dim \mathcal{L}_{0,c}[2]\le \dim \mathcal{M}_{0,c}[2]-|\text{basis}|=\frac{n(n-1)}{2}-\frac{n^2-3n}{2}=n$. To show equality, we show that $D_{y_1-y_2}x_i^2$ for $i=1,2,\dots ,n-1$ and $D_{y_1-y_2}x_1x_2$ are all linearly independent, showing that those $n$ polynomials generate all of $\mathcal{L}_{0,c}[2]$. (Clearly, these $n$ polynomials generate the entire subspace: first, we can obtain all polynomials of the form $x_i^2$. From the singular polynomials which contain a term $x_k(x_i-x_j)$, we only need a single monomial of the form $x_ix_j$ to generate all of the other monomials of the form $x_ax_b$. This covers every possible monomial of degree $2$.)

For $r\ne 1,2$, we have
\begin{align*}
D_{y_1-y_2}x_1^2&=-x_2,\\
D_{y_1-y_2}x_2^2&=-x_1,\\
D_{y_1-y_2}x_r^2&=x_1-x_2,\\
D_{y_1-y_2}x_1x_2&=x_2-x_1.\\
\end{align*}

Suppose such a linear combination existed as $f=a_1x_1^2+a_2x_2^2+\dots +a_{n-1}x_{n-1}^2-bx_1x_2$. Then we obtain the relations $a_1+a_2=0$, and $a_1+a_3+a_4+\dots +b=0$. But by symmetry (using other Dunkl operators $D_{y_1-y_k}$ for $k\ne 2$), we obtain that $a_1+a_k=0$. Again by symmetry, we obtain that $a_i+a_j=0$, which implies that all $a_i$ are $0$. Obviously, $x_1x_2$ is not singular, so we have the conclusion.
\end{proof}
\begin{prop}
\label{p2.20}
The dimension of $\mathcal{L}_{0,c}[3]$ is $n$ for $p>3$.
\end{prop}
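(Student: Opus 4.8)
The plan is to bound $\dim\mathcal{L}_{0,c}[3]$ both above and below by $n$, continuing the degree-by-degree strategy used for the lower degrees. After the substitution $x_n=-(x_1+\cdots+x_{n-1})$, the space $\mathcal{M}_{0,c}[3]$ is the space of cubic forms in the $n-1$ variables $x_1,\dots,x_{n-1}$, so $\dim\mathcal{M}_{0,c}[3]=\binom{n+1}{3}$, and the goal is to exhibit a subspace $J[3]\subseteq\ker\mathcal{B}[3]$ of codimension exactly $n$. I want to emphasize where the hypothesis enters: the target $n$ is precisely the coefficient of $z^3$ in $(1+z+\cdots+z^{p-1})(1+(n-2)z+z^2)$, namely $1+(n-2)+1$, and obtaining the trailing $+1$ requires the factor $[p]_z$ to reach degree $3$, i.e. $p\ge 5$. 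For $p=3$ the $z^3$ term of $[p]_z$ is absent and the coefficient drops to $n-1$; concretely, Newton's identity gives $\sum_i x_i^3\equiv 3e_3$ in $S\mathfrak{h}^*$, which degenerates in characteristic $3$ and causes the cubes to collapse. Any argument must use $p>3$ to avoid this.

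For the upper bound I would assemble $J[3]$ from two families already known to lie in $\ker\mathcal{B}$. First, because $\ker\mathcal{B}$ is an ideal, every product $x_\ell\,s$ of a linear form with a degree-$2$ singular polynomial $s=(x_j-x_k)(x_i-x_j-x_k)$ (Proposition \ref{p2.15}) lies in $\ker\mathcal{B}$, and these span $\mathfrak{h}^*\cdot\ker\mathcal{B}[2]$. Second, at $t=0$ the positive-degree invariants are central, so $\left(\left(S\mathfrak{h}^*\right)^{S_n}\right)_+\mathcal{M}_{0,c}\subseteq\ker\mathcal{B}$; its degree-$3$ part is spanned by $e_2 x_\ell$ and $e_3$. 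Using these I would derive a collection of reduction relations among cubic monomials, in the spirit of Lemmas \ref{l2.6}--\ref{l2.10}, expressing each $x_i^3$, $x_i^2x_j$, and $x_ix_jx_k$ modulo $\ker\mathcal{B}$ in terms of a fixed list of $n$ representatives (the natural candidate being the cubes $x_1^3,\dots,x_{n-1}^3$ together with a single mixed monomial such as $x_1^2x_2$). This yields $\dim\mathcal{L}_{0,c}[3]\le n$.

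For the lower bound I would show these $n$ representatives are linearly independent in $\mathcal{L}_{0,c}[3]$, i.e. that no nontrivial combination lies in $\ker\mathcal{B}$. As in the proofs of Propositions \ref{p2.4} and \ref{p2.19}, the clean approach is to apply compositions of Dunkl operators $D_{y_a-y_b}$, lowering degree until one lands in degree $0$ where $\mathcal{B}$ detects nonvanishing, and then read off a linear system on the coefficients. Throughout, the congruence $n\equiv 1\pmod p$ simplifies the numerical factors produced by the Dunkl operators (for instance making factors of $n-1$ vanish, exactly as in Proposition \ref{p2.15}), and the resulting system should force all coefficients to be $0$, giving $\dim\mathcal{L}_{0,c}[3]\ge n$ and hence equality.

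The main obstacle I anticipate is the bookkeeping in the upper bound. Unlike the characteristic-$2$ case, where every cubic monomial collapses to a single class and the surviving dimension is $1$, here the space survives in dimension $n$, so one must track a larger web of reduction relations coming from the $x_\ell s$ and the invariants and verify that together they cut $\mathcal{M}_{0,c}[3]$ down to codimension \emph{exactly} $n$, neither more nor less. Getting this count right hinges on checking that for $p\ge 5$ the cubes $x_i^3$ and the invariant $e_3$ remain genuinely independent (the failure of this for $p=3$ being the reason the statement is restricted to $p>3$), and on confirming that no additional degree-$3$ singular polynomials appear beyond the ideal generated by the degree-$2$ ones.
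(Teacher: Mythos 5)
Your two-sided strategy (span the quotient by $n$ representatives using the ideal generated by the degree-$2$ singular polynomials of Proposition \ref{p2.15} together with the invariants, then prove independence of the representatives by Dunkl-operator computations) is the same skeleton as the paper's proof, and the upper-bound half is fine as a plan: the paper carries it out exactly in the spirit you describe, starting from $x_k(x_k-x_j)(x_i-x_k-x_j)=-x_k^3+x_j^2x_k+x_ix_k^2-x_ix_jx_k\in\ker\mathcal{B}$ and deducing relations such as $x_i^2x_j-x_l^2x_j$ modulo $\ker\mathcal{B}$. The genuine gap is in the lower bound, where you write that the linear system coming from the Dunkl actions ``should force all coefficients to be $0$.'' That sentence is the entire content of the proposition: the analogous system has a nontrivial solution when $p=3$ (namely $x_1^3-x_1^2x_2+x_2^3\in\ker\mathcal{B}$, Proposition \ref{p2.22}), so no argument of this shape can succeed without exhibiting exactly where $p>3$ is invoked, and your proposal never does. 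There is also an unaddressed technical point that the paper has to confront: for a candidate $f$, the image $D_{y_1-y_2}f$ need only vanish in $\mathcal{L}_{0,c}[2]$, not in $\mathcal{M}_{0,c}[2]$, so one must rule out corrections by degree-$2$ singular polynomials; the paper does this with a parity argument on the cross monomials $x_ix_j$ to force all correction coefficients $d_{ijk}$ to vanish, and then the final coefficient comparison (using $n\equiv 1\pmod p$) gives $a=b=0$. Your alternative of composing Dunkl operators all the way down to degree $0$ would sidestep this quotient bookkeeping, but only at the cost of iterated computations that the proposal does not perform.

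Moreover, your diagnosis of where the hypothesis $p>3$ enters points at the wrong mechanism. You attribute the restriction to Newton's identity $\sum_i x_i^3=3e_3$ degenerating in characteristic $3$ and ``causing the cubes to collapse,'' and you propose to check that the cubes and $e_3$ ``remain genuinely independent.'' In fact, for $p=3$ the cubes $x_1^3,\dots,x_{n-1}^3$ remain linearly independent in $\mathcal{L}_{0,c}[3]$ --- the dimension there is $n-1$, not smaller (see the remark following Proposition \ref{p2.22}); what fails is the independence of the mixed representative $x_1^2x_2$ from the cubes, via the new kernel elements $x_i^3-x_i^2x_j+x_j^3$, a dependence invisible to any bookkeeping about $e_3$. (If anything, the degeneration of $3e_3$ removes a relation rather than creating one.) So a verification organized around your stated checkpoint would pass for $p=3$ as well and hence prove a false statement; the use of $p>3$ has to be located inside the explicit lower-bound computation, which is precisely the part deferred.
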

\begin{proof}
We perform the substitution for $x_n$ and show that the polynomials $x_i^3$ for $i<n$ and $x_a^2x_b$ for fixed $a,b<n$, combined with multiples of the singular polynomials in degree $2$, will generate all of $\mathcal{M}_{0,c}[3]$.

We first show that they generate all of $\mathcal{M}_{0,c}[3]$. Note that we have the polynomial $x_k(x_k-x_j)(x_i-x_k-x_j)=-x_k^3+x_j^2x_k+x_ix_k^2-x_ix_jx_k$ from $\ker\mathcal{B}$. Since $x_k^3$ can be produced, we can remove it, so we have $x_j^2x_k+x_ix_k^2-x_ix_jx_k$. Now take the polynomial $x_k^2x_i+x_jx_i^2-x_ix_jx_k$, which is simply the permutation $(ijk)\in S_n$ acting on the previous polynomial. Their difference yields $x_kx_j^2-x_jx_i^2$. Again by $S_n$ action, we can also produce the polynomial $x_lx_j^2-x_jx_i^2$. Their difference yields $$\left(x_kx_j^2-x_jx_i^2\right)-\left(x_lx_j^2-x_jx_i^2\right)=x_i^2x_j-x_l^2x_j.$$ Thus, if any one term of the form $x_i^2x_j$ were not in $\ker\mathcal{B}[3]$, using $x_1^2x_2$ we can produce anything of the form $a^2b$. Since all terms of the form $x_i^3$ are already produced, we only need terms of the form $x_ix_jx_k$, which can be easily obtained from $x_k(x_k-x_j)(x_i-x_j-x_k)$. Hence the dimension is at most $n$.

To show that the dimension is exactly $n$, we will show that no linear combination of $x_1^3$, $x_2^3$, \dots, $x_{n-1}^3$, $x_1^2x_2$ is in $\ker \mathcal{B}$. We will perform computations in $\mathcal{L}_{0,c}[2]$ (which was already found in Proposition~\ref{p2.19}) for the $D_{y_1-y_2}x_r^3$, subtracting the $(x_1-x_2)(x_r-x_1-x_2)$ polynomial, since that polynomial is in $\ker \mathcal{B}$ and thus is $0$ in $\mathcal{L}_{0,c}$. For $r\ne 1,2$, we have
\begin{align*}
D_{y_1-y_2}x_1^3&=x_1^2-x_1x_2-x_2^2,\\
D_{y_1-y_2}x_2^3&=x_1^2+x_1x_2-x_2^2,\\
D_{y_1-y_2}x_r^3&=x_1^2+x_1x_r-x_2x_r+x_2^2=2x_1^2,\\
D_{y_1-y_2}x_1^2x_2&=x_2^2-x_1^2.\\
\end{align*}
Suppose a linear combination $f=a_1x_1^3+\dots +a_{n-1}x_{n-1}^3+bx_1^2x_2$ is a singular polynomial; we will show that all coefficients are $0$. We must have that $D_{y_1-y_2}f=0$ in $\mathcal{L}_{0,c}[2]$, in accordance with the action of the Dunkl operators above. This means that $D_{y_1-y_2}f+\sum_{i,j,k}d_{ijk}(x_j-x_k)(x_i-x_j-x_k)=0$ in $\mathcal{M}_{0,c}[2]$. In particular, any singular polynomial from the summation introduces two $x_ix_j$ monomials, whereas there is only one from the action of $D_{y_1-y_2}f$ (which is $x_1x_2$). Therefore, by parity, all $d_{ijk}=0$ (the monomials of the form $x_ix_j$ will never cancel with each other or the $x_1x_2$ term). Thus we may only concern ourselves with the results from the above computations. From the action of $D_{y_1-y_2}$, we have $a_1=a_2$. By symmetry $a_1=a_2=\dots =a_{n-1}$, so let $a_i=a$. Then $aD_{y_1-y_2}\left(x_1^3+\dots +x_{n-1}^3\right)=2a(n-2)x_1^2-2ax_2^2=-2a\left(x_1^2+x_2^2\right)$. Comparing the coefficient of $x_1^2$ and $x_2^2$, we find that $a=-a\implies a=0$ and $b=0$ as well. We conclude that $a_1=a_2=\dots=a_{n-1}=b=0$, so no nontrivial linear combination is a singular polynomial.
\end{proof}
\begin{prop}
\label{p2.21}
The polynomials $x_i^2x_j-x_ix_j^2\in \ker \mathcal{B}$.
\end{prop}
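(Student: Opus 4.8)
The plan is to produce $x_i^2 x_j - x_i x_j^2$ inside $\ker\mathcal{B}$ by multiplying the degree-two singular polynomials of Proposition \ref{p2.15} by a carefully chosen linear factor and then exploiting an antisymmetry. Write $S_{i;j,k} = (x_j - x_k)(x_i - x_j - x_k)$, which is singular by Proposition \ref{p2.15} and hence lies in $\ker\mathcal{B}$; since $\ker\mathcal{B}$ is an ideal in $S\mathfrak{h}^*$, the product $x_i\,S_{i;j,k}$ lies in $\ker\mathcal{B}$ as well. The one computation I would carry out is to expand $S_{i;j,k} = x_i x_j - x_i x_k - x_j^2 + x_k^2$ and multiply by $x_i$, which gives
\[
x_i\,S_{i;j,k} = \left(x_i^2 x_j - x_i x_j^2\right) - \left(x_i^2 x_k - x_i x_k^2\right).
\]
Writing $g_{i,j} := x_i^2 x_j - x_i x_j^2$, this exhibits $g_{i,j} - g_{i,k} \in \ker\mathcal{B}$ for all distinct $i,j,k$.

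Next I would package this relation combinatorially. The congruence $g_{i,j} \equiv g_{i,k} \pmod{\ker\mathcal{B}}$ shows that the class of $g_{i,j}$ modulo $\ker\mathcal{B}$ does not depend on the second index $j$; call this common class $h_i$. The obvious antisymmetry $g_{i,j} = -g_{j,i}$ then forces $h_i = -h_j$ for every pair of distinct indices. Choosing three distinct indices $i,j,k$ (which exist since $n \ge p+1 \ge 4$ in the odd-characteristic regime) and combining $h_i = -h_j$ with $h_i = -h_k$ yields both $h_j = h_k$ and $h_j = -h_k$, hence $2h_k = 0$. Because $p$ is odd, this gives $h_k = 0$, so every $g_{i,j}$ vanishes modulo $\ker\mathcal{B}$, i.e.\ $x_i^2 x_j - x_i x_j^2 \in \ker\mathcal{B}$ as claimed.

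I expect no serious obstacle here beyond spotting the correct multiplier: the argument hinges on multiplying precisely the singular polynomial whose distinguished index matches the external factor $x_i$, so that the product telescopes into $g_{i,j} - g_{i,k}$ rather than into some unrelated degree-three combination. Once that identity is secured, the antisymmetry $g_{i,j} = -g_{j,i}$ together with the hypothesis that $p$ is odd does all the remaining work, and no case analysis or the substitution $x_n = -\sum_{\ell < n} x_\ell$ is needed.
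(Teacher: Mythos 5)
Your proof is correct, and it takes a genuinely different route from the paper's. The paper proves this proposition by brute-force contravariance: it sets $f=x_3^2x_4-x_3x_4^2$, computes $-D_{y_j}f$ for every $j$, observes that each image is either $0$ or a degree-two singular polynomial of the type in Proposition \ref{p2.15} (e.g.\ $-D_{y_2}f=(x_3-x_4)(x_2-x_3-x_4)$), and concludes that $f\in\ker\mathcal{B}$ because every Dunkl operator sends it into $\ker\mathcal{B}$. You avoid Dunkl computations entirely: the identity $x_i\,(x_j-x_k)(x_i-x_j-x_k)=g_{i,j}-g_{i,k}$ with $g_{i,j}=x_i^2x_j-x_ix_j^2$, together with the fact that $\ker\mathcal{B}$ is an ideal containing the singular polynomials of Proposition \ref{p2.15}, shows the class $h_i$ of $g_{i,j}$ modulo $\ker\mathcal{B}$ is independent of $j$; then the antisymmetry $g_{i,j}=-g_{j,i}$ applied to three distinct indices forces $2h_k=0$, and oddness of $p$ kills $h_k$. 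Your argument is shorter and less error-prone (nothing to compute beyond one polynomial expansion), and it makes transparent exactly where the hypothesis that $p$ is odd enters --- invertibility of $2$ --- which is invisible in the paper's computation. What the paper's method buys in exchange is a template that does not depend on finding a lucky ideal-theoretic identity: the same ``check that all Dunkl images land in the previously-determined kernel'' scheme is reused immediately afterwards (e.g.\ in Proposition \ref{p2.22}), where no analogue of your telescoping product is apparent. Both proofs legitimately rely on the same two background facts (singular polynomials lie in $\ker\mathcal{B}$, and $\ker\mathcal{B}$ is an ideal), so no gap arises on that account.
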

\begin{proof}
Let $f=x_3^2x_4-x_3x_4^2$. Note that $-D_{y_2}f=-x_3^2+x_4^2+x_2x_3-x_2x_4\in\ker\mathcal{B}$. Similarly, $-D_{y_j}f=-x_3^2+x_4^2+x_jx_3-x_jx_4\in\mathcal{B}$ for all $j\ne 3,4$. Finally, it remains to compute $-D_{y_3}f=0$. (Note that $-D_{y_4}$ acts in the same way as $-D_{y_3}$ by virtue of swapping indices.) Hence $D_{y_i-y_j}f\in\ker\mathcal{B}\implies f\in\ker\mathcal{B}$ by Lemma~\ref{kerB}.
\end{proof}

\begin{prop}
\label{p2.22}
When $p=3$, the polynomials $x_i^3-x_i^2x_j+x_j^3\in\ker \mathcal{B}$.
\end{prop}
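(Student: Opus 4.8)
The plan is to use the defining property of $\ker\mathcal{B}$ as the radical of the contravariant form together with the adjunction $\mathcal{B}(gy,q)=\mathcal{B}(g,D_y q)$. Since the augmentation ideal of $S\mathfrak{h}$ is generated by the elements $y_a-y_b$ and the operators $D_{y_a}$ commute, a homogeneous $f$ of positive degree lies in $\ker\mathcal{B}$ as soon as $D_{y_a-y_b}f\in\ker\mathcal{B}$ for every pair $a,b$: indeed $\mathcal{B}(h\,(y_a-y_b),f)=\mathcal{B}(h,D_{y_a-y_b}f)=0$ once $D_{y_a-y_b}f\in\ker\mathcal{B}$. Writing $D_{y_a-y_b}=D_{y_1-y_b}-D_{y_1-y_a}$ reduces the task to the operators $D_{y_1-y_m}f$ for $m=2,3,\dots,n$, and by the $S_n$-invariance of $\ker\mathcal{B}$ I may assume $f=x_1^3-x_1^2x_2+x_2^3$. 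So it remains to compute the degree-$2$ polynomials $D_{y_1-y_m}f$ and recognize each as an element of $\ker\mathcal{B}[2]$.

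For the computation I would work in $S\mathfrak{h}^*=\Bbbk[x_1,\dots,x_n]/(x_1+\dots+x_n)$, so that $\sum_k x_k=0$, with $c=1$, $p=3$, and $n\equiv 1\pmod 3$. Using $D_{y_i}=-\sum_{k\ne i}(x_i-x_k)^{-1}(1-\sigma_{ik})$ together with the identity $(x_a-x_b)^{-1}(x_a^m-x_b^m)=\sum_{l=0}^{m-1}x_a^l x_b^{m-1-l}$ (with $\sigma_{ab}$ merely swapping $x_a$ and $x_b$ in each monomial), I would assemble the action of $D_{y_1}$, $D_{y_2}$, and $D_{y_r}$ (for $r\ne 1,2$) on the three monomials $x_1^3$, $x_2^3$, $x_1^2x_2$, repeatedly using $\sum_{k\ne i}x_k=-x_i$. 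Collecting terms and reducing the resulting integer coefficients modulo $3$ via $n\equiv 1$, I expect the pair $m=2$ to collapse completely: every coefficient of $D_{y_1-y_2}f$ becomes a multiple of $3$, giving $D_{y_1-y_2}f=0$ exactly.

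For $m\ge 3$ the permutations of $\{3,\dots,n\}$ fix $f$ and preserve $\ker\mathcal{B}$, so it suffices to treat $m=3$. Here I anticipate that, again using $p=3$ and $n\equiv 1\pmod 3$, the residue simplifies to $D_{y_1-y_3}f=s_{1,2,3}-P_2$, where $s_{1,2,3}=(x_2-x_3)(x_1-x_2-x_3)$ and $P_2=\sum_k x_k^2$. The first summand is singular by Proposition \ref{p2.15}; the second is a positive-degree symmetric polynomial, hence singular for $t=0$ since each $1-\sigma_{ik}$ annihilates it. Both therefore lie in $\ker\mathcal{B}$, because singular polynomials generate a submodule contained in $\ker\mathcal{B}$. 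This gives $D_{y_1-y_m}f\in\ker\mathcal{B}$ for all $m$, and hence $f\in\ker\mathcal{B}$.

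The main obstacle is entirely bookkeeping: carefully tracking the cross terms produced by $D_{y_r}(x_1^2x_2)$ and the square terms from $D_{y_1}(x_1^3)$ and $D_{y_r}(x_j^3)$, and then observing that after the mod-$3$ reduction the degree-$2$ residue is exactly one singular quadratic plus the power sum $P_2$ (with the leftover $3x_1x_2$ vanishing). The arithmetic facts that force this — $2n-5\equiv n-1\equiv n-4\equiv 0$ and $n-3\equiv 1\pmod 3$ — are precisely where the hypotheses $p=3$ and $p\mid n-1$ enter, so the delicate point is to isolate these congruences rather than to grind through the algebra itself.
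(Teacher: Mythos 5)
Your proof is correct and takes essentially the same route as the paper: reduce membership of $f=x_1^3-x_1^2x_2+x_2^3$ in $\ker\mathcal{B}$ to showing that each degree-$2$ Dunkl image lies in $\ker\mathcal{B}$, compute those images, and recognize them as combinations of the singular quadratics of Proposition~\ref{p2.15} and symmetric polynomials. If anything your bookkeeping is more careful than the paper's: you correctly obtain $D_{y_1-y_2}f=0$ exactly and $D_{y_1-y_3}f=(x_2-x_3)(x_1-x_2-x_3)-\sum_k x_k^2$, recording the symmetric (hence singular, since $t=0$) power-sum residue that the paper's displayed computation of $-D_{y_1}f$ silently drops, and thereby also avoiding the paper's appeal to Proposition~\ref{p2.21} in handling the operator involving $y_2$.
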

\begin{proof}
It suffices to prove that all Dunkl operators $D_{y_i-y_j}$ send $x_1^3-x_1^2x_2+x_2^3$ to a degree $2$ singular polynomial. Let $f=x_1^3-x_1^2x_2+x_2^3$. Note that $-D_{y_1}f=-x_1x_2-x_2^2+x_1x_2+x_2^2\in\ker\mathcal{B}$, and the action of $-D_{y_2}$ is exactly the same by symmetry (and by Proposition~\ref{p2.21} we may replace $x_1^2x_2$ by $x_1x_2^2$). Finally, for $j\ne 1,2$, $-D_{y_j}f=(x_2-x_j)(x_1-x_2-x_j)\in\ker\mathcal{B}$. Hence $D_{y_i-y_j}f\in\ker\mathcal{B}\implies f\in\ker\mathcal{B}$ by Lemma~\ref{kerB}.
\end{proof}
\begin{remark}
This shows that for $p=3$, the dimension of $\mathcal{L}_{0,c}[3]$ is actually $n-1$, because we do not need the polynomial $x_i^2x_j$ to be in $\mathcal{L}$ anymore. So long as all $x_i^3$ are for $i<n$, we can recover the $x_i^2x_j$.
\end{remark}

From now on, we work in $\mathcal{M}'_{0,c}=\mathcal{M}_{0,c}/\left(x_i^2x_j-x_ix_j^2\right)$ (and also with $\ker\mathcal{B}/\left(x_i^2x_j-x_ix_j^2\right)$).  Thus, we can shift exponents around in any monomial so long as all of the exponents remain positive. We will therefore not concern ourselves with specific exponents, but only with the variables that appear in the monomial.
\begin{definition}
\label{df2.1}
Denote $x_{s_1}^{e_1}x_{s_2}^{e_2}\dotsm x_{s_b}^{e_b}$ by the tuple $(s_1,\dots ,s_b)$. (The degree will be specified each time.)
\end{definition}

Denote the singular polynomials from Proposition~\ref{p2.15} as $(i)-(j)+(j,k)-(i,k)$. Notice that unless a new singular polynomial appears, then the $\ker \mathcal{B}[3]$ polynomials are either the symmetric polynomial $(1)+(2)+\dots +(n)$ or multiples of the singular polynomials, namely $(i)-(i,j)+(i,j,k)-(i,k)$ or $(i,l)-(j,l)+(j,k,l)-(i,k,l)$.
\begin{definition}
\label{df2.2}
Denote $(*)_j$ as the set of polynomials $(i)-(j)+(j,k)-(i,k)\in\mathcal{M}'_{0,c}[j]$. Similarly, define $(\dagger)_j$ as the set of polynomials $(i)-(i,j)+(i,j,k)-(i,k)\in\mathcal{M}'_{0,c}[j]$.
\end{definition}
\begin{definition}
Define $\mathcal{I}$ to be the ideal generated by the polynomials from~\ref{p2.15}.
\end{definition}
\begin{remark}
It's worth pointing out that $\mathcal{I}\subset \mathcal{B}$ and that if no new polynomials (which are sent into $\ker\mathcal{B}$ upon action by any Dunkl operator) appear in some gradation $\mathcal{M}_{0,c}[j]$, then $\mathcal{I}[j]=\ker \mathcal{B}[j]$.
\end{remark}
\begin{prop}
\label{p2.23}
In $\ker \mathcal{B}[3]$, the following polynomials constitute a basis for $(*)_3$ and $(\dagger)_3$:
\begin{itemize}
    \item[$(*)_3$]: $(i)-(j)+(j,k)-(i,k)$; choose $i,j,k$ in the same as we did for degree $2$ singular polynomials
    \item[$(\dagger)_3$]: $(i)-(i,j)+(i,j,k)-(i,k)$; choose $i<j<k$.
\end{itemize}
\end{prop}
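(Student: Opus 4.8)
The plan is to treat everything as linear algebra inside $\mathcal{M}'_{0,c}[3]$, organized by the number of distinct variables appearing in each monomial: a degree-$3$ tuple is of type $(i)$, $(i,j)$, or $(i,j,k)$ according as it involves one, two, or three distinct indices. The crucial observation is that every polynomial in $(*)_3$ involves only one- and two-variable tuples, whereas each generator of $(\dagger)_3$ carries exactly one three-variable tuple $(i,j,k)$. This separation by support size is what makes the two families interact cleanly.

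First I would handle the $(*)_3$ bullet. Its generators $(i)-(j)+(j,k)-(i,k)$ are formally identical to the degree-$2$ singular polynomials of Proposition \ref{p2.15}, only reinterpreted in degree $3$. I would check that the substitution $x_n=-\sum_{\ell<n}x_\ell$ produces the same linear relations on tuples in degree $3$ as in degree $2$ — for instance $(a,n)=-(a)-\sum_{\ell<n,\,\ell\ne a}(a,\ell)$ holds identically in both degrees — so that the span $(*)_3$ is linearly isomorphic to the span of the degree-$2$ singular polynomials via the tuple correspondence. Consequently the relations established in the proof of Proposition \ref{p2.16} (Lemmas \ref{l2.17} and \ref{l2.18}) transfer verbatim, and the very same index choices yield a basis of $(*)_3$ of size $\frac{n^2-3n}{2}$, exactly the count appearing in Proposition \ref{p2.19}.

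Next I would treat $(\dagger)_3$ modulo $(*)_3$. Writing $P_i$ for the generator $(i)-(i,j)+(i,j,k)-(i,k)$ with special index $i$ and unordered pair $\{j,k\}$, a one-line expansion gives
\[
P_i-P_j=(i)-(j)+(j,k)-(i,k),
\]
which is a $(*)_3$ polynomial, and the analogous identities hold for the other two pairs. Hence for a fixed triple $\{i,j,k\}$ the three choices of special index all coincide modulo $(*)_3$, so it suffices to keep one representative, which I take to be the one with $i<j<k$. This shows that the listed $(\dagger)_3$ polynomials, together with the $(*)_3$ basis, span all of $(\dagger)_3$, and in fact all of $(*)_3+(\dagger)_3$, since every $(*)_3$ generator is itself a difference $P_i-P_j$.

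Finally I would prove linear independence of the combined list. Suppose a linear combination of the listed $(*)_3$ and $(\dagger)_3$ polynomials vanishes in $\mathcal{M}'_{0,c}[3]$. The three-variable tuples occur only in the $(\dagger)_3$ generators, and the chosen generator for the triple $\{i,j,k\}$ contributes $(i,j,k)$ with coefficient $1$, with distinct triples giving distinct tuples; comparing coefficients of the three-variable monomials forces every $(\dagger)_3$ coefficient to vanish, after which the residual vanishing combination of the $(*)_3$ basis is trivial by the first step. The main obstacle is precisely that first step: one must verify that eliminating $x_n$ and collapsing exponents in $\mathcal{M}'_{0,c}$ introduce no degree-$3$ relations among the one- and two-variable tuples beyond those already present in degree $2$, so that these tuples stay independent and the degree-$2$ count genuinely carries over. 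The three-variable collapses (such as $(i,j,n)$) enter only the $(\dagger)_3$ analysis; they must be reconciled with the chosen index range, in the same spirit as the relation among singular polynomials noted in the proof of Proposition \ref{p2.16}.
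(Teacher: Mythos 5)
Your proposal is correct and takes essentially the same approach as the paper: the $(*)_3$ part is deferred to Proposition \ref{p2.16}, spanning of $(\dagger)_3$ comes from your identity $P_i-P_j=(i)-(j)+(j,k)-(i,k)\in(*)_3$ (which is exactly the paper's step of adding a $(*)_3$ polynomial to a basis element of $(\dagger)_3$ to move the special index), and linear independence follows because each chosen $(\dagger)_3$ generator contains a unique three-variable tuple $(i,j,k)$. Even the caveat you flag --- that collapsing exponents and eliminating $x_n$ in $\mathcal{M}'_{0,c}[3]$ must not introduce new relations among the one- and two-variable tuples --- is left at the same level of informality in the paper itself, whose proof simply says the $(*)_3$ claim is ``proven in the same fashion as in Proposition \ref{p2.16}.''
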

\begin{proof}
The basis for $(*)_3$ generates all polynomials in $(*)_3$, which is proven in the same fashion as in Proposition~\ref{p2.16}. We can see that all polynomials in the basis of $(\dagger)_3$ are linearly independent with each other and the basis of $(*)_3$ because they contain a unique term $(i,j,k)$. Furthermore, we can generate all polynomials of the form $(a)-(a,b)+(a,b,c)-(a,c)$ for distinct $a,b,c\in [n]$. If $a<b<c$, then take $a=i,b=j,c=k$. If $b<a<c$, then take the polynomial $(b)-(b,a)+(b,a,c)-(b,c)\in (\dagger)$. We know that from the basis of $(*)$ we can form the polynomial $(a)-(b)+(b,c)-(a,c)$. Adding the two yields the result. Similarly, we can take any polynomial in $(\dagger)_3$ and obtain all polynomials which are permutations of its indices by adding or subtracting polynomials of the form $(i)-(j)+(j,k)-(i,k)$.
\end{proof}
\begin{prop}
\label{p2.24}
We can generate all polynomials in $\mathcal{I}[3]$ using $(*)_3$ and $(\dagger)_3$.
\end{prop}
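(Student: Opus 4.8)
The plan is to reduce $\mathcal{I}[3]$ to its natural generators and evaluate each one in the tuple notation. Since $\mathcal{I}$ is the ideal generated by the degree-$2$ singular polynomials of Proposition \ref{p2.15}, and these are all concentrated in degree $2$, the degree-$3$ component $\mathcal{I}[3]$ is spanned by the products $x_\ell\cdot g$, where $\ell$ runs over all the variables and $g=(i)-(j)+(j,k)-(i,k)$ runs over the degree-$2$ generators. It therefore suffices to show that each such product lies in the span of $(*)_3\cup(\dagger)_3$.

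First I would record the multiplication rule in $\mathcal{M}'_{0,c}$: in tuple notation, $x_\ell$ adjoins the index $\ell$ to a tuple when $\ell$ is new and leaves the tuple unchanged when $\ell$ already occurs. Applying this to $x_\ell g$ gives four cases according to how $\ell$ meets $\{i,j,k\}$. If $\ell=i$, then $x_i g=(i)-(i,j)+(i,j,k)-(i,k)$, which is exactly an element of $(\dagger)_3$. If $\ell=j$, then $x_j g=(i,j)-(j)+(j,k)-(i,j,k)=-\big[(j)-(i,j)+(i,j,k)-(j,k)\big]$, again a $(\dagger)_3$ element up to sign. If $\ell=k$, the four terms cancel in pairs and $x_k g=0$. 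All three of these cases are immediate.

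The one case that takes work is $\ell\notin\{i,j,k\}$, where $x_\ell g=(i,\ell)-(j,\ell)+(j,k,\ell)-(i,k,\ell)$ is the ``third type'' of polynomial flagged before Definition \ref{df2.2}. Here the idea is to spend two $(\dagger)_3$ elements to kill the triple-index monomials $(j,k,\ell)$ and $(i,k,\ell)$, after which only single- and double-index monomials survive and a $(*)_3$ element must account for them. Writing $(\dagger)_{a,b,c}:=(a)-(a,b)+(a,b,c)-(a,c)$, I expect the identity
\begin{equation*}
(i,\ell)-(j,\ell)+(j,k,\ell)-(i,k,\ell)=\big[(i)-(j)+(j,k)-(i,k)\big]+(\dagger)_{j,k,\ell}-(\dagger)_{i,k,\ell},
\end{equation*}
in which the bracketed term is a $(*)_3$ element; this is checked by a direct term-by-term comparison, with the single-index terms $(i),(j)$ and the double-index terms $(j,k),(i,k)$ all cancelling and the remaining doubles and triples matching. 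Because every reordering of a $(*)$-type or $(\dagger)$-type polynomial is produced by the chosen bases (Proposition \ref{p2.23}), the right-hand side lies in the span of $(*)_3\cup(\dagger)_3$, which settles the last case.

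The main obstacle is locating this final identity: one must choose the correct pair $(\dagger)_{j,k,\ell},(\dagger)_{i,k,\ell}$ so that their triple terms exactly match those of the third-type polynomial, whereupon the single/double remainder is forced into the shape of a $(*)_3$ element. With the identity established the proposition follows, and the reverse containment is then transparent: $(\dagger)_{i,j,k}=x_i g$ and the $(*)_3$ element equals $(x_i+x_j)g$, both visibly multiples of singular polynomials, so in fact $\mathcal{I}[3]=\operatorname{span}\big((*)_3\cup(\dagger)_3\big)$.
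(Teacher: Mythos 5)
Your proof is correct and takes essentially the same approach as the paper's: both decompose $\mathcal{I}[3]$ into the products $x_\ell\cdot g$, recognize the $\ell\in\{i,j,k\}$ cases as $(\dagger)_3$ elements (or zero), and resolve the remaining third-type polynomial $(i,\ell)-(j,\ell)+(j,k,\ell)-(i,k,\ell)$ by an explicit identity combining one $(*)_3$ element with two $(\dagger)_3$ elements — your identity is just a rearrangement of the paper's, and both check out term by term. The only difference is that the paper additionally exhibits the symmetric polynomial $(1)+(2)+\dots+(n)$ as a sum of $(*)_3$ elements (used later for identifying $\mathcal{I}[j]$ with $\ker\mathcal{B}[j]$), a point your argument subsumes anyway since that polynomial already lies in $\mathcal{I}[3]$, which you show is contained in the span of $(*)_3\cup(\dagger)_3$.
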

\begin{proof}
As noted in Proposition~\ref{p2.23}, we simply need to generate all polynomials of the form $(i)-(i,j)+(i,j,k)-(i,k)$ or $(i,l)-(j,l)+(j,k,l)-(i,k,l)$, and the symmetric polynomial. The first kind, $(i)-(i,j)+(i,j,k)-(i,k)$, is exactly produced by the basis of $(\dagger)_3$. The second kind, $(i,l)-(j,l)+(j,k,l)-(i,k,l)$, can be written as $\left[(l)-(j,l)+(j,k,l)-(k,l)\right]-\left[(i)-(i,l)+(i,k,l)-(i,k)\right]+\left[(i)-(l)-(i,k)+(k,l)\right]$, and hence is also generated by $(*)_3$ and $(\dagger)_3$.

Finally, it remains to show that $(1)+(2)+\dots +(n)$ can be generated by $(*)$ and $(\dagger)$. But notice that from $(*)$, $$\sum_{k=1}^{n-2}\left[(k)-(n)+(n-1,n)-(1,n-1)\right]=(1)+(2)+\dots +(n-2)+(n)-(n-1,n)+(n-1)+(n,n-1)$$$$=(1)+(2)+\dots +(n).$$
\end{proof}

By Proposition~\ref{p2.16}, the basis for $\mathcal{I}[2]$ is given by $(*)_2$. The basis for $\mathcal{I}[3]$ is given by $(*)_3$ and $(\dagger)_3$. Now heading into higher degrees, the $(*)_j$ will always generate $(*)_{j+1}$ and $(\dagger)_{j+1}$, but the set of polynomials $(\dagger)_3$ will produce both $(\dagger)_4$ and the set of polynomials of the form $(i,l)-(i,j,l)+(i,j,k,l)-(i,k,l)$.
\begin{definition}
\label{l2.3}
Denote the set of polynomials in $\ker \mathcal{B}[j]$ of the form $(i,l_1,l_2,\dots, l_{q-3})-(i,j,l_1,\dots ,l_{q-3})+(i,j,k,l_1,l_2,\dots, l_{q-3})-(i,k,l_1,\dots ,l_{q-3})$ as $(\dagger^q)_j$ (for $q>3$).
\end{definition}
\begin{remark}
In $(\dagger^q)_j$, the $j$ is the degree, and the $q$ denotes the maximum number of distinct $x_i$ which may appear in a single element. Necessarily $q\ge 3$, since we have $i,j,k$ appearing; setting $q=3$ recovers the set $(\dagger)$. We will now focus on $q>3$.
\end{remark}
\begin{prop}
\label{p2.25}
In $\ker \mathcal{B}[j]$ for $j>3$, the following polynomials constitute a basis for $(*)_j$, $(\dagger)_j$, and $(\dagger^q)_j$ for $q=4,5,\dots ,j$:
\begin{itemize}
    \item[$(*)_j$]: $(i)-(j)+(j,k)-(i,k)$; choose $i,j,k$ in the same as in Proposition~\ref{p2.16},
    \item[$(\dagger)_j$]: $(i)-(i,j)+(i,j,k)-(i,k)$; choose $i<j<k$,
    \item[$(\dagger^q)_j$]: $(i,l_1,l_2,\dots, l_{q-3})-(i,j,l_1,\dots, ,l_{q-3})+(i,j,k,l_1,l_2,\dots, l_{q-3})-(i,k,l_1,\dots ,l_{q-3})$ for $q=4,5,\dots ,j$; choose $i<j<k<l_1<l_2<\dots <l_{q-3}$ for each $q$.
\end{itemize}
\end{prop}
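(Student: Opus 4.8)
The plan is to prove the statement in two stages, closely mirroring Propositions~\ref{p2.16} and~\ref{p2.23}: first show that the listed polynomials span the full sets $(*)_j$, $(\dagger)_j$, and each $(\dagger^q)_j$ (with arbitrary orderings of indices), and then verify linear independence. Throughout I would treat the ``spectator'' indices $l_1,\dots,l_{q-3}$ as fixed passengers carried along unchanged, and I would induct on $q$, writing $L=\{l_1,\dots,l_{q-3}\}$ so that a typical member of $(\dagger^q)_j$ reads $(i,L)-(i,j,L)+(i,j,k,L)-(i,k,L)$.

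For the spanning claim I would argue family by family. The set $(*)_j$ is generated by its listed basis by the same computation as in Proposition~\ref{p2.16}: the degree-$j$ relations among the elements $(i)-(j)+(j,k)-(i,k)$ are formally identical to the degree-$2$ ones, since in $\mathcal{M}'_{0,c}$ the tuple notation legitimately suppresses exponents. The argument of Proposition~\ref{p2.23} then shows that the ordered $(\dagger)_j$ together with $(*)_j$ produce every $(\dagger)_j$-polynomial with its three indices in arbitrary order. For general $q$ I would generalize the decomposition of Proposition~\ref{p2.24}: an element $(i,L)-(i,j,L)+(i,j,k,L)-(i,k,L)$ with $i,j,k$ in arbitrary order can be rewritten, modulo $(*)_j$ and the ordered $(\dagger^q)_j$, by exactly the telescoping identity that reduced an arbitrary permutation to the sorted one in degree~$3$; because the common factor $L$ sits inside all four tuples, that identity carries over verbatim. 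Sorting the passengers $l_1<\dots<l_{q-3}$ afterward would follow by an inner induction, realizing each adjacent passenger transposition through a combination of $(\dagger^{q-1})$ and $(*)$ relations.

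For linear independence I would grade the three families by the number of distinct variables in their ``top'' tuple: elements of $(*)_j$ involve at most two variables, each ordered $(\dagger)_j$-element has a unique three-variable tuple $(i,j,k)$, and each ordered $(\dagger^q)_j$-element has a unique $q$-variable tuple $(i,j,k,l_1,\dots,l_{q-3})$. The constraint $i<j<k<l_1<\dots<l_{q-3}$ puts these top tuples in bijection with the size-$q$ subsets of $[n]$, so they are pairwise distinct and cannot be produced by any family with fewer variables. Hence in a vanishing linear combination one takes the largest $q$ occurring with nonzero coefficient, notes that only ordered $(\dagger^q)_j$-elements contribute size-$q$ tuples, concludes those coefficients vanish, and descends.

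The main obstacle I expect is the spanning step for $(\dagger^q)_j$ with $q\ge 4$. Two points require care: checking that the degree-$3$ telescoping identities of Propositions~\ref{p2.23} and~\ref{p2.24} genuinely persist after inserting the fixed passenger set $L$ into every tuple, and---more delicately---confirming that reordering the passengers themselves never forces a relation outside the listed families. Making the inner induction on passenger transpositions precise, so that each swap is exhibited explicitly as a combination of lower $(\dagger^{q-1})$ and $(*)$ relations, is the crux of the argument.
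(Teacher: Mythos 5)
Your architecture is the same as the paper's: linear independence from the unique ``top'' tuple with $q$ distinct variables (descending from the largest $q$), and spanning by reducing an arbitrarily ordered element of $(\dagger^q)_j$ to the sorted basis by induction on $q$. The independence half of your argument is correct and matches the paper. The gap is in the spanning half, exactly at the point you yourself flag as the crux. Your claim that the degree-$3$ telescoping identity of Propositions \ref{p2.23} and \ref{p2.24} ``carries over verbatim,'' with corrections lying in ``$(*)_j$ and the ordered $(\dagger^q)_j$,'' is false as stated: inserting the passenger set $L$ into every tuple of that identity produces correction terms of the shape $(i,L)-(j,L)+(j,k,L)-(i,k,L)$, a $(*)$-pattern dressed with passengers, and by Definition \ref{df2.2} such a polynomial lies in \emph{none} of the listed families when $L\neq\emptyset$ (its four terms have $|L|{+}1,|L|{+}1,|L|{+}2,|L|{+}2$ distinct variables, a multiset that fits no $(*)$ or $(\dagger^{q'})$ pattern). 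Moreover, the move that your sorted basis actually forces --- exchanging a core index $k$ with a passenger $l$, needed whenever some passenger is smaller than some core index --- is never exhibited; you only promise an ``inner induction.'' (Permuting the passengers among themselves, which you also worry about, is a non-issue: a tuple records only a set of variables, so that move is literally the identity.)

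Both moves are closed by the single mechanism the paper invokes and your outline is missing: every shuffle is a \emph{difference of two elements of $(\dagger^{q-1})_j$}. Write $L=\{l\}\cup L'$ (note $L\neq\emptyset$ precisely because $q\ge 4$), and call ``the element with core $(a;b,c)$ and passengers $M$'' the polynomial $(a,M)-(a,b,M)+(a,b,c,M)-(a,c,M)$. For the core swap, the dressed correction above satisfies
$$(i,L)-(j,L)+(j,k,L)-(i,k,L)=\Bigl[(l,L')-(l,j,L')+(l,j,k,L')-(l,k,L')\Bigr]-\Bigl[(l,L')-(l,i,L')+(l,i,k,L')-(l,k,L')\Bigr],$$
and for the core--passenger exchange, the difference between the $(\dagger^q)_j$ element with core $(i;j,k)$, passengers $\{l\}\cup L'$, and the one with core $(i;j,l)$, passengers $\{k\}\cup L'$, equals
$$\Bigl[(i,L')-(i,j,L')+(i,j,k,L')-(i,k,L')\Bigr]-\Bigl[(i,L')-(i,j,L')+(i,j,l,L')-(i,l,L')\Bigr].$$
All four bracketed polynomials are (arbitrarily ordered) elements of $(\dagger^{q-1})_j$, hence lie in the span of the listed bases by the induction hypothesis on $q$; this is precisely the paper's one-line assertion that one ``can arbitrarily shuffle the order of $i,j,k,l_1,\dots,l_{q-3}$ by adding and subtracting polynomials in the basis of $(\dagger^{q-1})_j$.'' With these two identities substituted for your telescoping step, your outline becomes the paper's proof.
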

\begin{proof}
We already know that the bases for $(*)_j$ and $(\dagger)_j$ are linearly independent and generate all of $(*)_j$ and $(\dagger)_j$. But we can easily see that for the bases of the $(\dagger^q)_j$'s, they are all linearly independent due to each one containing a unique term of $(i,j,k,l_1,l_2,\dots, l_{q-3})$. Hence we can inductively show that each basis for $(\dagger^q)_j$ is linearly independent with all the basis polynomials for $(*)_j$, $(\dagger)_j$, and $(\dagger^r)_j$ for $r<q$.

It thus remains to show that the basis for $(\dagger^q)_j$ can generate all of $(\dagger^q)_j$. Choose an arbitrary $q$. Then we have polynomials of the form $(i,l_1,l_2,\dots, l_{q-3})-(i,j,l_1,\dots ,l_{q-3})+(i,j,k,l_1,l_2,\dots, l_{q-3})-(i,k,l_1,\dots ,l_{q-3})$ for $q=4,5,\dots ,j$ with $i<j<k<l_1<l_2<\dots <l_{q-3}$. But note that we can arbitrarily shuffle the order of $i,j,k,l_1,\dots ,l_{q-3}$ by adding and subtracting polynomials in the basis of $(\dagger^{q-1})_j$. Thus, by induction, we have all of $(\dagger^q)_j$ for each $q$.
\end{proof}
\begin{prop}
\label{p2.26}
The polynomials $(*)_j$, $(\dagger)_j$, and $(\dagger^q)_j$ for $4\le q\le j$ generate all of $\mathcal{I}[j]$.
\end{prop}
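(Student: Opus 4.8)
The plan is to prove this as the degree-$j$ analogue of Proposition \ref{p2.24}: enumerate the ideal generators of $\mathcal{I}[j]$ and reduce each to the three listed families. Since $\mathcal{I}$ is generated by the degree-$2$ singular polynomials and the set $(*)_2$ spans $\mathcal{I}[2]$ (Proposition \ref{p2.16}), every element of $\mathcal{I}[j]$ is a $\Bbbk$-linear combination of products $m\cdot\sigma_{a,b,c}$, where $\sigma_{a,b,c}=(a)-(b)+(b,c)-(a,c)$ is a $(*)_2$ generator (with $a,b,c$ distinct) and $m$ is a monomial of degree $j-2$ with support $L$. Thus it suffices to show that each such product lies in the span of $(*)_j$, $(\dagger)_j$, and the $(\dagger^q)_j$.

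First I would compute the product in $\mathcal{M}'_{0,c}$, using that in this quotient a degree-$j$ monomial is determined by its support: writing $(S)$ for the degree-$j$ monomial supported on $S$, one gets $m\cdot\sigma_{a,b,c}=(L\cup\{a\})-(L\cup\{b\})+(L\cup\{b,c\})-(L\cup\{a,c\})$. I would then run a case analysis on $\{a,b,c\}\cap L$. If two or more of $a,b,c$ lie in $L$, or if $c\in L$ while $a,b\notin L$, two pairs of supports coincide and the product vanishes. If $a\in L$ (and $b,c\notin L$), then writing $L=\{a\}\sqcup L_0$ identifies the product with the $(\dagger^q)_j$ element of base $a$, pair $(b,c)$, and tail $L_0$; the subcase $b\in L$ similarly produces minus a $(\dagger^q)_j$ element of base $b$. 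The only genuinely new shape occurs when $\{a,b,c\}\cap L=\varnothing$, giving the ``starred-with-tail'' polynomial $P(a,b,c;L)=(L\cup\{a\})-(L\cup\{b\})+(L\cup\{b,c\})-(L\cup\{a,c\})$.

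The heart of the argument is reducing $P(a,b,c;L)$ by induction on $|L|$, exactly as the ``second kind'' polynomials were handled in Proposition \ref{p2.24}. The base case $L=\varnothing$ is a $(*)_j$ element. For the inductive step I would fix a pivot $l\in L$, put $L'=L\setminus\{l\}$, and form two explicit $(\dagger^q)_j$ elements: $A$ with base $l$, pair $(b,c)$, tail $L'$, and $B$ with base $a$, pair $(l,c)$, tail $L'$. A direct expansion in $\mathcal{M}'_{0,c}$ (all cross-terms collapsing because support determines the monomial) gives $A-B=P(a,b,c;L)-P(a,l,c;L')$, hence $P(a,b,c;L)=A-B+P(a,l,c;L')$, where the residual starred-with-tail polynomial has strictly smaller tail. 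Both $A$ and $B$ have maximal support of size $|L|+2\le j$, so they are legitimate $(\dagger^q)_j$ elements with $q\le j$, and iterating terminates in a $(*)_j$ element.

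Finally I would record the reverse containment, that each listed polynomial lies in $\mathcal{I}[j]$: the $(\dagger^q)_j$ arise from the $a\in L$ case above, while $(*)_j=\big(\tfrac{x_a^{j-1}-x_b^{j-1}}{x_a-x_b}\big)\,\sigma_{a,b,c}$, the middle cross-terms $x_a^{j-1}x_b$ and $x_ax_b^{j-1}$ cancelling in $\mathcal{M}'_{0,c}$. The symmetric polynomial $\sum_i(i)$, which needed separate attention in Proposition \ref{p2.24}, requires none here: it lies in $\mathcal{I}[j]$ and is therefore already a combination of products $m\cdot\sigma$ covered by the case analysis. I expect the main obstacle to be the bookkeeping in the inductive reduction --- selecting the pivot and the two auxiliary $(\dagger^q)_j$ elements so that the leftover is again a starred-with-tail polynomial with tail of size $|L|-1$, while verifying that all auxiliary indices are distinct and that $q$ never exceeds $j$.
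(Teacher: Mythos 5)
Your proof is correct, and it takes a genuinely different route from the paper's. The paper argues by induction on the degree: it writes $\mathcal{I}[j+1]=\sum_i x_i\,\mathcal{I}[j]$, multiplies the degree-$j$ basis elements by single variables, and records which family each product falls into, handling the power sum $\sum_i (i)$ separately by an explicit identity. You instead work in one shot at degree $j$: expand an arbitrary generator product $m\cdot\sigma_{a,b,c}$ in $\mathcal{M}'_{0,c}$, do a case analysis on $\{a,b,c\}\cap L$, and reduce the one nontrivial shape $P(a,b,c;L)$ by induction on $|L|$ via the identity $P(a,b,c;L)=A-B+P(a,l,c;L')$. I checked the vanishing cases, this identity, and your factorization $(*)_j=\bigl(\sum_{u+v=j-2}x_a^u x_b^v\bigr)\sigma_{a,b,c}$; all are correct, and the bound $q=|L|+2\le j$ holds since $|L|\le\deg m=j-2$. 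Your argument is actually tighter at a point the paper glosses over: when $(*)_j$ is multiplied by $x_i$ with $i$ outside its index set, the product is not literally a $(*)_{j+1}$ or $(\dagger)_{j+1}$ element but the ``second kind'' polynomial of Proposition \ref{p2.24}, whose reduction the paper never re-invokes in this proof; your tail induction is the systematic version of exactly that reduction, valid for tails of any size. What the paper's route buys in exchange is the explicit verification that the symmetric polynomial $\sum_i(i)$ lies in the span of $(*)_j$, which is what Proposition \ref{p2.28} uses downstream. You dispose of it with the assertion that $\sum_i(i)\in\mathcal{I}[j]$; this is true but not free --- proving it within your framework still requires the paper's identity writing $\sum_i(i)$ as a sum of $(*)_j$ elements (which rests on $x_1+\dots+x_n=0$ and $p\mid n-1$), combined with your own containment $(*)_j\subset\mathcal{I}[j]$. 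So if your proof were to replace the paper's, that one identity should be kept rather than waved at.
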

\begin{proof}
Fix a $j>3$ (the case $j=3$ was already done in Proposition~\ref{p2.23}). Then to obtain $j+1$, each of the basis polynomials for $\mathcal{I}[j]$ are multiplied by an $x_i$, and there is a new symmetric polynomial. However this symmetric polynomial is explicitly given as follows: $$\sum_{k=1}^{n-2}\left[(k)-(n)+(n-1,n)-(1,n-1)\right]=(1)+(2)+\dots +(n-2)+(n)-(n-1,n)+(n-1)+(n,n-1),$$$$=(1)+(2)+\dots +(n).$$
Multiplying $(*)_j$ yields $(*)_{j+1}$ or $(\dagger)_{j+1}$. Multiplying $(\dagger)_j$ yields $(\dagger)_{j+1}$ or $(\dagger^4)_{j+1}$. Multiplying $(\dagger^q)_j$ yields either $(\dagger^q)_{j+1}$ or $(\dagger^{q+1})_{j+1}$, hence the result.
\end{proof}
\begin{prop}
\label{p2.27}
The polynomial $(i)-(i,j)+(j)$ is singular in degree $p$.
\end{prop}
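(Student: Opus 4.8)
The statement is to be read in the same operative sense as its special case $p=3$, Proposition \ref{p2.22}: writing $g=x_i^{p}-x_i^{a}x_j^{b}+x_j^{p}$ for any representative of the degree-$p$ monomial $(i,j)$ (all such representatives agree in $\mathcal{M}'_{0,c}$, differing by elements of the ideal $(x_i^2x_j-x_ix_j^2)\subseteq\ker\mathcal{B}$), I would prove that every Dunkl operator carries $g$ into $\ker\mathcal{B}$, whence $g\in\ker\mathcal{B}$. The plan is to first record the reduction that $g\in\ker\mathcal{B}$ as soon as $D_{y_k}g\in\ker\mathcal{B}$ for all $k$: since $S\mathfrak{h}$ is generated by the differences $y_k-y_\ell$ and $\mathcal{B}(uw,q)=\mathcal{B}(u,D_w q)$, any pairing $\mathcal{B}(u,g)$ with $u$ of positive degree factors through some $\mathcal{B}(\,\cdot\,,D_{y_k-y_\ell}g)$, which vanishes once the difference $D_{y_k-y_\ell}g=D_{y_k}g-D_{y_\ell}g$ lies in $\ker\mathcal{B}$; the degree-$0$ pairing vanishes because $\deg g=p>0$. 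So it suffices to control each individual $D_{y_k}g$, exactly as in Proposition \ref{p2.22}.

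By $S_n$-symmetry I would fix $i=1,\ j=2$ and split according to whether the index $k$ is \emph{active} ($k\in\{1,2\}$) or a \emph{spectator} ($k\ge 3$). The key simplification is the characteristic-$p$ identity $x_k^{p}-x_\ell^{p}=(x_k-x_\ell)^{p}$, which collapses the Dunkl action on the pure powers to $-D_{y_1}(x_1^{p})=\sum_{k\ne 1}(x_1-x_k)^{p-1}$ and $-D_{y_1}(x_2^{p})=-(x_1-x_2)^{p-1}$, leaving only the mixed term $x_1^{a}x_2^{b}$ to be differentiated directly, producing truncated geometric sums of the form $x_2^{b}\frac{x_1^{a}-x_k^{a}}{x_1-x_k}$. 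For a spectator $r\ge 3$ only the summands $k=1,2$ survive, so $-D_{y_r}g$ is a polynomial of degree $p-1$ in $x_1,x_2,x_r$ alone; for $p=3$ this is precisely the singular polynomial $(x_2-x_r)(x_1-x_2-x_r)$ of Proposition \ref{p2.15}, matching the computation in Proposition \ref{p2.22}.

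The goal in each case is to exhibit $-D_{y_k}g$ as a $\Bbbk$-linear combination of (i) symmetric polynomials of positive degree and (ii) the quadratic singular polynomials of Proposition \ref{p2.15} together with their multiples, i.e.\ elements of $\mathcal{I}$. Both families lie in $\ker\mathcal{B}$ — the first because $\big((S\mathfrak{h}^{*})^{S_n}\big)_{+}\mathcal{M}_{0,c}\subseteq\ker\mathcal{B}$, the second because $\mathcal{I}\subseteq\ker\mathcal{B}$ — and moreover below degree $p$ no new kernel elements appear, so $\ker\mathcal{B}[j]=\mathcal{I}[j]$ for $j<p$ (Propositions \ref{p2.25}, \ref{p2.26} and the preceding remark); since the images have degree $p-1$, landing in $\mathcal{I}[p-1]$ is enough. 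The hypothesis $p\mid n-1$ enters in two linked ways. It is exactly the condition making the quadratics of Proposition \ref{p2.15} singular in the first place, and it forces the spectator count $n-2\equiv -1\pmod p$; for an active index this is what makes the non-symmetric remainder collapse, as one already sees for $p=3$, where $-D_{y_1}g$ reduces to the symmetric power sum $\sum_{k}x_k^{2}$ precisely because the coefficients $n-4$ and $n-1$ vanish modulo $3$.

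The hard part will be the bookkeeping for the mixed monomial $x_1^{a}x_2^{b}$: once the pure powers are handled, one must organize the remaining truncated sums into the two admissible families. The delicate point is to decide in each case whether the per-spectator three-variable piece already lies in $\mathcal{I}$ on its own (as happens for $p=3$), or whether one must first sum over all spectators and invoke $n-2\equiv -1\pmod p$ to annihilate a leftover multiple of a genuinely non-kernel polynomial such as $x_1^2-x_2^2$. Concretely, I expect every residual term to appear with a coefficient divisible by $n-1$, so that $p\mid n-1$ kills it; verifying this divisibility uniformly in $a,b$ with $a+b=p$ is the crux, and everything else is the symmetric-polynomial and Proposition \ref{p2.15} accounting described above.
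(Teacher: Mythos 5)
Your framework is the same as the paper's: pick a representative of $(i)-(i,j)+(j)$ in $\mathcal{M}'_{0,c}$, reduce the claim to showing that every $D_{y_k}$ sends it into $\ker\mathcal{B}$, split into active indices $k\in\{1,2\}$ and spectators $k\ge 3$, and aim to land each image in the span of positive-degree symmetric multiples and $\mathcal{I}[p-1]$. But all of that scaffolding is already in place from Propositions \ref{p2.21} and \ref{p2.22}; the entire new content of Proposition \ref{p2.27} is the explicit degree-$p$ computation of those images and their identification as kernel elements, and this is exactly what you do not do. You defer it as ``the crux,'' replacing the verification by the expectation that every residual term carries a coefficient divisible by $n-1$. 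No Dunkl image is computed for any exponent pair, and no decomposition into kernel elements is exhibited, so as it stands this is a plan, not a proof.

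Moreover, the expectation misdescribes the actual cancellation mechanism, so carrying out your plan as written would stall. Take $f=x_1^p-x_1x_2^{p-1}+x_2^p$ (a single representative suffices, by your own remark that representatives agree modulo $(x_i^2x_j-x_ix_j^2)\subseteq\ker\mathcal{B}$; the uniformity in $(a,b)$ you flag as the main difficulty is a self-imposed burden). Using $\binom{p-1}{m}\equiv(-1)^m\pmod{p}$ and exponent-shifting, the active case reads
$$-D_{y_1}f\equiv (p-2)\cdot(1,2)+(n-2)\cdot(1)+\sum_{k>2}(k)+(p-2)\sum_{k>2}(1,k)-(n-2)\cdot(2)$$
in $\mathcal{M}'_{0,c}[p-1]$, and the vanishing results from $n-2\equiv-1\pmod{p}$ together with the symmetric-multiple relations $\sum_{k>2}(k)\equiv-(1)-(2)$ and $\sum_{k>2}(1,k)\equiv-(1)-(1,2)$ modulo $\ker\mathcal{B}$, which give $-2\cdot(1,2)-2\cdot(1)+2\cdot(1)+2\cdot(1,2)=0$; at no point is any coefficient divisible by $n-1$. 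For a spectator $j$ the image does not vanish at all: it equals $(1)+2\cdot(j)-3\cdot(1,j)-2\cdot(2,j)-(1,2)+3\cdot(1,2,j)$, which must be recognized as $\bigl[(1)-(j)+(2,j)-(1,2)\bigr]+3\bigl[(j)-(1,j)+(1,2,j)-(2,j)\bigr]$, a $(*)_{p-1}$ element plus $3$ times a $(\dagger)_{p-1}$ element of $\mathcal{I}[p-1]$. These two identifications are the proof of Proposition \ref{p2.27}; without them your proposal has only restated the strategy it shares with Propositions \ref{p2.21} and \ref{p2.22}.
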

\begin{proof}
We will use the polynomial $f=x_1^p-x_1x_2^{p-1}+x_2^p$ and first check the action of the Dunkl operator $-D_{y_1}$. We will freely replace monomials with the notation $(i,j)$.

We have that
\begin{align*}
-D_{y_1}f&=\sum_{k\ne 1}\frac{1-\sigma_{1k}}{x_1-x_k}\left(x_1^p-x_1x_2^{p-1}+x_2^p\right),\\
&=x_1x_2\frac{x_1^{p-2}-x_2^{p-2}}{x_1-x_2}+\sum_{k>2}\frac{x_1^p-x_k^p}{x_1-x_k}-x_2^{p-1}\sum_{k>2}\frac{x_1-x_k}{x_1-x_k},\\
&=x_1x_2\frac{x_1^{p-2}-x_2^{p-2}}{x_1-x_2}+x_2^{p-1}-x_1^{p-1}-x_1^{p-1}-x_2^{p-1}+\sum_{k>2}\left(x_1^{p-2}x_k+\dots +x_1x_k^{p-2}\right),\\
&=(p-2)\cdot (1,2)+(2)-(1)-(1)-(2)+(p-2)\sum_{k>2}(1,k),\\
&=-2\cdot (1,2)-2\cdot (1)-2\cdot \left[-(1)-(1,2)\right],\\
&=0.\\
\end{align*}

Since $x_1^p-x_1x_2^{p-1}+x_2^p$ is invariant under the action of $\sigma_{12}$ in $\mathcal{M}'_{0,c}$, we have $-D_{y_2}\left(x_1^p-x_1x_2^{p-1}+x_2^p\right)=0$.

Now take $j\ne 1,2$. Then we have
\begin{align*}
D_{y_j}f&=\sum_{k\ne j}\frac{1-\sigma_{jk}}{x_k-x_j}\left(x_1^p-x_1x_2^{p-1}+x_2^p\right),\\
&=\frac{1-\sigma_{1j}}{x_1-x_j}\left(x_1^p-x_1x_2^{p-1}+x_2^p\right)+\frac{1-\sigma_{2j}}{x_2-x_j}\left(x_1^p-x_1x_2^{p-1}+x_2^p\right),\\
&=(1)+(j)+(p-2)\cdot (1,3)-(2)+(2)+(j)+(p-2)\cdot (2,j)-(1,2)-(1,j)-(p-3)\cdot (1,2,j),\\
&=(1)+2\cdot (j) -3\cdot (1,j)-2\cdot (2,j)-(1,2)+3\cdot (1,2,j),\\
&=\left[(1)-(j)+(2,j)-(1,2)\right]-3\left[(j)-(1,j)+(1,2,j)-(2,j)\right]\in\ker\mathcal{B}[p-1].\\
\end{align*}
Hence all Dunkl operators $D_{y_k-y_l}$ send all degree $p$ polynomials of the form $(i)-(i,j)+(j)$ into the kernel, so they are in $\ker\mathcal{B}[p]$.
\end{proof}
\begin{prop}
\label{p2.28}
For any $j$, the dimension of $\mathcal{L}_{0,c}[j]$ is at most $n$.
\end{prop}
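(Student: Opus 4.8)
The plan is to bound $\dim\mathcal{L}_{0,c}[j]$ from above by producing an explicit spanning set of size $n$ for the larger quotient $\mathcal{M}'_{0,c}[j]/\mathcal{I}[j]$. Since $\mathcal{I}\subseteq\ker\mathcal{B}$, we have
$$\dim\mathcal{L}_{0,c}[j]\le\dim\bigl(\mathcal{M}'_{0,c}[j]/\mathcal{I}[j]\bigr),$$
so it suffices to show the right-hand side is at most $n$. The cases $j=0,1$ are immediate from Propositions \ref{p2.13} and \ref{p2.14}, so I assume $j\ge 2$. Write $u_i=(i)$ for the degree-$j$ monomial supported on $\{i\}$ and $w_{ab}=(a,b)$ for the one supported on $\{a,b\}$, recalling that in $\mathcal{M}'_{0,c}$ a monomial is determined by its support alone.

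First I would eliminate every monomial of large support. Using the relations $(\dagger^q)_j$ for $3\le q\le j$, which lie in $\mathcal{I}[j]$ by Proposition \ref{p2.26}, any support $S$ with $|S|\ge 3$ satisfies $(S)=(S\setminus\{j\})+(S\setminus\{k\})-(S\setminus\{j,k\})$ for a choice of distinct $j,k\in S$, expressing $(S)$ through monomials of strictly smaller support. By downward induction on support size, $\mathcal{M}'_{0,c}[j]/\mathcal{I}[j]$ is therefore spanned by the $u_i$ together with the $w_{ab}$. Next, the relations $(*)_j$, namely $(i)-(j)+(j,k)-(i,k)=0$ (again in $\mathcal{I}[j]$), give after a short manipulation the identity $w_{ab}=w_{12}+u_a+u_b-u_1-u_2$ for every pair $\{a,b\}$. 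Hence every size-two monomial is a combination of $w_{12}$ and the $u_i$, leaving the spanning set $\{u_1,\dots,u_n,w_{12}\}$ of size $n+1$.

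The final reduction is where the hypothesis $p\mid n-1$ enters. The defining relation $x_1+\dots+x_n=0$ of $S\mathfrak{h}^*$, multiplied by $x_n^{j-1}$ and reduced in $\mathcal{M}'_{0,c}$, yields the identity $u_n+\sum_{s\ne n}w_{ns}=0$. Substituting the formula for $w_{ns}$ gives
$$n\,u_n+(n-1)w_{12}+\sum_{s=1}^{n-1}u_s-(n-1)(u_1+u_2)=0,$$
and since $n\equiv 1$ and $n-1\equiv 0\pmod p$, every term carrying a factor of $n-1$ vanishes and this collapses to $u_n=-\sum_{s=1}^{n-1}u_s$. Thus $u_n$ is redundant, so $\{u_1,\dots,u_{n-1},w_{12}\}$ spans $\mathcal{M}'_{0,c}[j]/\mathcal{I}[j]$, giving the desired bound $\dim\mathcal{L}_{0,c}[j]\le n$.

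I expect the bookkeeping of the first step (checking that the telescoping reduction terminates and that the needed instances of $(\dagger^q)_j$ and $(*)_j$ are genuinely available as elements of $\mathcal{I}[j]$) to be routine given Proposition \ref{p2.26}. The genuinely delicate point is the cancellation in the last display: it is precisely what forces $w_{12}$ to survive while $u_n$ collapses, and it uses $p\mid n-1$ in an essential way. If this congruence failed, the $w_{12}$ term would not vanish and the count would come out to $n$ in a different, less uniform fashion, so isolating this cancellation is the crux of the argument.
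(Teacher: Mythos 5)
Your proposal is correct and takes essentially the same approach as the paper: both arguments reduce every monomial, using the $(*)_j$, $(\dagger)_j$, and $(\dagger^q)_j$ families inside $\mathcal{I}[j]\subseteq\ker\mathcal{B}$, to the span of the $n$ elements $(1),\dots,(n-1),(1,2)$. The only cosmetic difference is in eliminating $(n)$: the paper invokes the symmetric polynomial $(1)+(2)+\dots+(n)\in\mathcal{I}$ (Proposition \ref{p2.24}), while you re-derive the same collapse directly from the defining relation $x_1+\dots+x_n=0$ multiplied by $x_n^{j-1}$, which makes the use of $p\mid n-1$ explicit but proves nothing the paper's citation does not already cover.
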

\begin{proof}
We will show that the $n$ polynomials $(1)$, $(2)$, \dots, $(n-1)$, and $(1,2)$, combined with $(*)_j$, $(\dagger)_j$, and $(\dagger^q)_j$ (for $4\le q\le j$) linearly generate the entire subspace of homogeneous degree $j$ polynomials.

First, we easily obtain all polynomials of the form $(i)$, since the only missing one is $(n)$ but the symmetric polynomial fills that in.

Next, we obtain all polynomials of the form $(i,j)$ from $(*)_j$, since each is of the form $(i)-(j)+(j,k)-(i,k)$. We can remove $(i)-(j)$ and we are left with $(j,k)-(i,k)$. Setting $k=1$ and $i=2$ allows us to obtain all of the form $(1,j)$ and next setting $i=1$ and $j,k$ to be anything allows us to obtain all $(j,k)$.

We can then obtain all $(i,j,k)$ from $(\dagger)_j$, since they are of the form $(i)-(i,j)+(i,j,k)-(i,k)$. Removing the necessary terms leaves us with $(i,j,k)$.

For $q>3$, any polynomial in $(\dagger^q)_j$ contains one term with $q$ distinct variables and other terms with less than $q$ distinct variables. Inductively we can remove all other terms to obtain all terms of the form $(i_1,i_2,\dots ,i_q)$. When we reach $q=j$, we are done.
\end{proof}
\begin{prop}
\label{p2.29}
For $2\le j\le p-1$, the dimension of $\mathcal{L}_{0,c}[j]$ is $n$.
\end{prop}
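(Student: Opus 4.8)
The plan is to match the upper bound with a lower bound obtained by induction on $j$. Proposition \ref{p2.28} already gives $\dim\mathcal{L}_{0,c}[j]\le n$, so it suffices to show that the $n$ elements $(1),\dots,(n-1),(1,2)$ stay linearly independent in $\mathcal{L}_{0,c}[j]$. I would induct on $j$, taking $j=2$ (Proposition \ref{p2.19}) as the base case and assuming as inductive hypothesis that $\mathcal{L}_{0,c}[j-1]$ has dimension $n$ with basis $(1),\dots,(n-1),(1,2)$.

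For the inductive step I would invoke the characterization, immediate from the defining properties of $\mathcal{B}$, that $f\in\ker\mathcal{B}[j]$ if and only if $D_{y_s-y_t}f\in\ker\mathcal{B}[j-1]$ for every pair $s,t$; since $\ker\mathcal{B}[j-1]$ is understood by induction, this turns the question into a finite linear computation inside $\mathcal{L}_{0,c}[j-1]$. Writing a hypothetical kernel element as $f=\sum_{i<n}a_i(i)+b(1,2)$, I would compute each $\overline{D_{y_s-y_t}f}$ in the known basis. The inputs are that positive-degree symmetric polynomials are singular for $t=0$ and hence vanish in $\mathcal{L}_{0,c}$ (so $\sum_i(i)\equiv 0$ and, more generally, $x^{a}P_m\equiv 0$), together with the two-variable reduction $(a,b)\equiv(a)+(b)+(1,2)-(1)-(2)$ coming from the degree-two singular polynomials of Proposition \ref{p2.15}. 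These yield clean formulas such as $D_{y_1-y_2}(i)\equiv(j-1)[(1)-(2)]$ for $i\neq 1,2$, and one then sets every $\overline{D_{y_s-y_t}f}$ equal to zero, repeatedly simplifying the recurring factor $n-2\equiv -1\pmod p$.

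The delicate point, and the step I expect to be the main obstacle, is that one must impose vanishing of $D_{y_s-y_t}f$ for \emph{all} pairs, in particular those involving the index $n$, rather than only the operators $D_{y_1-y_k}$ that survive after substituting $x_n=-\sum_{i<n}x_i$. The operators internal to $\{1,\dots,n-1\}$ alone leave a spurious one-parameter family (essentially a multiple of $\sum_{i\ge 3}(i)+(1,2)$) whose image vanishes because a coefficient collapses to a multiple of $p$; it is precisely the operators that move the index $n$ which eliminate this family and force $a_i=b=0$. Tracking these extra constraints and checking that the full system is nondegenerate is where $2\le j\le p-1$ is used: the governing coefficients, built from $j-1$, $j-2$, and $n-2\equiv -1$, are all nonzero modulo $p$ in this range, whereas at $j=p$ one of them vanishes and yields exactly the new singular polynomial $(i)-(i,j)+(j)$ of Proposition \ref{p2.27}, dropping the dimension. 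Equivalently, the same computation shows that no element enters $\ker\mathcal{B}[j]$ beyond $\mathcal{I}[j]$ for $j<p$, so $\ker\mathcal{B}[j]=\mathcal{I}[j]$ and the $n$ generators of Proposition \ref{p2.28} are genuinely independent, giving $\dim\mathcal{L}_{0,c}[j]=n$.
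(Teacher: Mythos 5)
Your proposal follows the same skeleton as the paper's proof: induction on $j$ with Proposition \ref{p2.19} as base case, the upper bound and spanning set from Proposition \ref{p2.28}, the criterion that $f\in\ker\mathcal{B}[j]$ iff every $D_{y_s-y_t}f\in\ker\mathcal{B}[j-1]$, and then a finite linear computation in the known basis of $\mathcal{L}_{0,c}[j-1]$ (your formula $D_{y_1-y_2}(i)\equiv(j-1)\left[(1)-(2)\right]$ for $i\ne1,2$ is also the paper's). The genuine difference is your ``delicate point,'' and you are right to insist on it --- indeed it is a \emph{correction} to the paper. The paper's proof invokes only $D_{y_1-y_2}$, an appeal to symmetry, and a final check with $D_{y_1-y_3}$, i.e.\ only operators with both indices in $\{1,\dots,n-1\}$; its claim that symmetry forces $a_1=a_2=\dots=a_{n-1}$ is not correct, because the two-variable basis element (the paper's $x_3x_4^{j-1}$) itself contributes to the $(1,3)$-type coefficient of $D_{y_1-y_3}f$. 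Carrying out the full system for the internal operators one finds only the relations $a_s=a_t$ for $s,t\notin\{3,4\}$, $a_3=a_4=a_1-b$, and the single extra equation $(j-1)a_1+jb=0$: a one-parameter family survives, exactly as you predict.

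A concrete witness: take $p=5$, $n=6$, $j=3$ and $f=x_1^3+x_2^3+x_5^3+x_3x_4^2$, which modulo $\mathcal{I}[3]$ is your $\sum_{i\ge 3}(i)+(1,2)$ (your description of the family is exact precisely when $2j\equiv 1\pmod p$, as here; for general $j$ it is proportional to $(2j-1)\left[(1)+(2)\right]+j\sum_{i\ge3}(i)-(j-1)(1,2)$). Direct computation gives $D_{y_1-y_2}f=(x_1-x_2)(x_3-x_1-x_2)+(x_1-x_2)(x_5-x_1-x_2)$, a sum of two Proposition \ref{p2.15} singular polynomials, and $D_{y_1-y_3}f$ equals such a sum minus the symmetric polynomial $\sum_k x_k^2$; the same happens for $D_{y_1-y_4}$ and $D_{y_1-y_5}$, so \emph{every} $D_{y_s-y_t}f$ with $s,t\le 5$ lies in $\ker\mathcal{B}[2]$. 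Yet $D_{y_1-y_6}f\equiv x_1^2-x_6^2-x_1x_6\equiv 3x_1^2+3x_2^2+2x_3^2+2x_4^2+2x_5^2-x_1x_2$, which is nonzero in $\mathcal{L}_{0,c}[2]$ by Proposition \ref{p2.19}; so $f\notin\ker\mathcal{B}[3]$, but only an operator moving the index $n$ detects this. Thus your plan, not the paper's shortcut, is the sound one, and it also explains the $j=p$ degeneration into Proposition \ref{p2.27} exactly as you say. What your proposal still owes is the actual computation of $D_{y_s-y_n}f$ modulo $\ker\mathcal{B}[j-1]$ --- the messiest step, since $\sigma_{sn}$ forces the expansion $x_n=-\sum_{i<n}x_i$ --- together with the verification that the enlarged system is nondegenerate for all $2\le j\le p-1$; until that is written out the death of the spurious family is asserted rather than proved, though the witness above confirms it in the smallest case.
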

\begin{proof}
We showed that $\dim\mathcal{L}_{0,c}[2]=n$ in Proposition~\ref{p2.19}, so assume that $j>2$. It suffices to show that no linear combination of $(1)$, $(2)$, \dots, $(n-1)$, and $(3,4)$ is in $\ker \mathcal{B}$.

Let us examine the operator $D_{y_1-y_2}$. We perform computations strictly in $\mathcal{L}_{0,c}[j-1]$ (which has been found by the prior inductive step), adding and subtracting polynomials from $\ker \mathcal{B}$ freely. We have that
\begin{align*}
D_{y_1-y_2}x_1^j&=-\sum_{k\ne 1}\frac{1-\sigma_{1k}}{x_1-x_k}x_1^j+\sum_{k\ne 2}\frac{1-\sigma_{2k}}{x_2-x_k}x_1^j,\\
&= (j-2)\cdot (1)-(2)+(j-2)\cdot (1,2).\\
\end{align*}
We also have that
\begin{align*}
D_{y_1-y_2}x_2^j&=-\sum_{k\ne 1}\frac{1-\sigma_{1k}}{x_1-x_k}x_2^j+\sum_{k\ne 2}\frac{1-\sigma_{2k}}{x_2-x_k}x_2^j,\\
&= -(j-2)\cdot (2)+(1)-(j-2)\cdot (1,2).\\
\end{align*}
For $r\ne 1,2$, then
\begin{align*}
D_{y_1-y_2}x_r^j&=-\sum_{k\ne 1}\frac{1-\sigma_{1k}}{x_1-x_k}x_r^j+\sum_{k\ne 2}\frac{1-\sigma_{2k}}{x_2-x_k}x_r^j,\\
&=(1)-(2)+(j-2)\cdot (1,r)-(j-2)\cdot (2,r)+(j-2)\left[(1)-(2)+(2,t)-(1,r)\right],\\
&=(j-1)\cdot \left[(1)-(2)\right].\\
\end{align*}
Finally,
\begin{align*}
D_{y_1-y_2}x_3x_4^{j-1}=&-\sum_{k\ne 1}\frac{1-\sigma_{1k}}{x_1-x_k}x_3x_4^{j-1}+\sum_{k\ne 2}\frac{1-\sigma_{2k}}{x_2-x_k}x_3x_4^{j-1},\\
=&(1,3)-(2,3)+(j-3)\cdot (1,3,4)-(j-3)\cdot (2,3,4)-(j-3)\left[(1)-(1,3)+(1,3,4)-(1,4)\right],\\
=&(j-2)\cdot (1,3)-(2,3)-(j-3)\cdot (1)+(j-3)\cdot (1,4)-(j-3)\cdot (2,3,4)\\
&+(j-3)\left[(2)-(2,3)+(2,3,4)-(2,4)\right],\\
=&(j-2)\cdot (1,3)-(j-2)\cdot (2,3)-(j-3)\left[ (1)-(2)+(2,4)-(1,4)\right]\\
&+(j-3)\left[(1)-(2)+(2,4)-(1,4)\right],\\
=&(j-2)\left[(1,3)-(2,3)+(1)-(2)+(2,3)-(1,3)\right],\\
=&(j-2)\left[(1)-(2)\right].\\
\end{align*}

Suppose that we have such a polynomial, $f=a_1x_1^j+a_2x_2^j+\dots +a_{n-1}x_{n-1}^j+bx_3x_4^{j-1}$. Then we note that $a_1=a_2$ to remove the $(1,2)$ terms. By symmetry (using Dunkl operators), we have $a_1=a_2=a_3=\dots =a_{n-1}$. Obviously $a_i\ne 0$, since $x_3x_4^{j-1}$ is not singular. Now we assume without loss of generality that $a_i=\frac{1}{j-1}$, to obtain that $$D_{y_1-y_2}\left(\sum a_ix_i^j\right)=(n-2)\left[(1)-(2)\right]=(2)-(1).$$ We thus conclude that $b=-\frac{1}{j-2}$. But a quick check using the operator $D_{y_1-y_3}$ shows that $f$ is not singular after all, and hence no linear combination exists.
\end{proof}
\begin{prop}
\label{p2.30}
The dimension of $\mathcal{L}_{0,c}[p]$ is $n-1$.
\end{prop}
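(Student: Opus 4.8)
The plan is to follow the two-sided template of Propositions~\ref{p2.28} and~\ref{p2.29}: bound $\dim\mathcal{L}_{0,c}[p]$ from above by producing a spanning set, and then show from below that this set is linearly independent in $\mathcal{L}_{0,c}[p]$. The single new ingredient is Proposition~\ref{p2.27}, which says the polynomials $(i)-(i,j)+(j)$ are singular in degree $p$ and hence lie in $\ker\mathcal{B}[p]$ although they are not in $\mathcal{I}[p]$. It is exactly this extra relation, available only in degree $p$, that forces the dimension to drop from the value $n$ found in degrees $2\le j\le p-1$ down to $n-1$.

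For the upper bound I would work in $\mathcal{L}_{0,c}[p]$ and use Proposition~\ref{p2.27} to write $(i,j)=(i)+(j)$. Substituting this into $(\dagger)$ (which lies in $\mathcal{I}[p]\subseteq\ker\mathcal{B}[p]$) yields $(i,j,k)=(i)+(j)+(k)$, and an induction on the number of distinct variables, using the families $(\dagger^q)$ for $4\le q\le p$, collapses every class $(i_1,\dots,i_s)$ to $(i_1)+\dots+(i_s)$. By Proposition~\ref{p2.28} the classes $(1),\dots,(n-1),(1,2)$ span $\mathcal{L}_{0,c}[p]$; now $(1,2)=(1)+(2)$ is redundant, and the symmetric power sum $(1)+\dots+(n)\in\mathcal{I}$ lets us discard $(n)$, so $(1),\dots,(n-1)$ already span and $\dim\mathcal{L}_{0,c}[p]\le n-1$.

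The lower bound is the heart of the argument and the step I expect to be the main obstacle: I must show $(1),\dots,(n-1)$ are independent, i.e. that $f=\sum_{i=1}^{n-1}a_i(i)\in\ker\mathcal{B}[p]$ forces $f=0$. I would use the standard reduction that $f\in\ker\mathcal{B}[p]$ iff $D_{y_k-y_l}f\in\ker\mathcal{B}[p-1]$ for all $k,l$, computing in $\mathcal{L}_{0,c}[p-1]$, whose structure is fully known from Proposition~\ref{p2.29}: it has dimension $n$, and crucially the two-variable class $(k,l)$ is nonzero and independent of the single-variable classes. The key point is that the collapsing relation $(k,l)=(k)+(l)$ lives only in degree $p$, so once a Dunkl operator pushes $f$ down into degree $p-1$ the two-variable monomials it creates cannot be absorbed and act as detectors. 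Since $\ker\mathcal{B}$ is $S_n$-stable, for any transposition $\tau=(k\,l)$ with $k,l<n$ we get $(a_k-a_l)\bigl((k)-(l)\bigr)=f-\tau f\in\ker\mathcal{B}[p]$. Applying $D_{y_k-y_l}$ via the formulas of Proposition~\ref{p2.29} (with indices $k,l$ and degree $p$) gives $D_{y_k-y_l}\bigl((k)-(l)\bigr)=(p-3)\bigl((k)+(l)\bigr)+2(p-2)(k,l)$, whose $(k,l)$-coordinate $2(p-2)\equiv-4$ is nonzero mod $p$ because $p$ is odd; hence $(k)-(l)\notin\ker\mathcal{B}[p]$ and so $a_k=a_l$. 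Therefore all coefficients coincide, say $a_i=a$.

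It then remains to rule out $f=a\sum_{i<n}(i)$. Applying $D_{y_1-y_2}$ term by term with the Proposition~\ref{p2.29} formulas in degree $p$, the $(1,2)$-contributions cancel and the single-variable part collects to $a(p-1)(n-2)\bigl((1)-(2)\bigr)$; reducing modulo $p$ and using $p\mid n-1$, i.e. $n\equiv1\pmod p$, turns the scalar into $a(p-1)(n-2)\equiv a(2-n)\equiv a$, so $D_{y_1-y_2}f\equiv a\bigl((1)-(2)\bigr)$. Since $(1)\ne(2)$ in $\mathcal{L}_{0,c}[p-1]$, this is nonzero unless $a=0$, forcing $f=0$. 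Together with the upper bound this gives $\dim\mathcal{L}_{0,c}[p]=n-1$, matching the coefficient of $z^p$ in the conjectured Hilbert series. The only real hazard is bookkeeping: one must keep scrupulous track of which relations are legal in degree $p$ versus degree $p-1$, since it is precisely the congruence $n-2\equiv-1\pmod p$ that keeps the final scalar from vanishing.
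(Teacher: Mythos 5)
Your proof is correct, and its skeleton is the paper's: the upper bound is obtained exactly as there (the spanning set of Proposition~\ref{p2.28} together with the degree-$p$ relation $(i)-(i,j)+(j)\in\ker\mathcal{B}$ of Proposition~\ref{p2.27}, which makes $(1,2)$ redundant), and the lower bound is a two-stage Dunkl computation reduced into $\mathcal{L}_{0,c}[p-1]$: first force all coefficients of $\sum_{i<n}a_ix_i^p$ to coincide, then rule out the equal-coefficient sum via the scalar $(p-1)(n-2)\equiv 1\pmod p$, which is where $p\mid n-1$ enters. Where you genuinely differ is the first stage. The paper computes $D_{y_i-y_j}$ on each of $x_i^p$, $x_j^p$, $x_r^p$, displays answers involving only one-variable classes ($-(j)$, $(i)$, $(i)-(j)$), and compares coefficients of $(i)$ and $(j)$ across the whole linear combination; you instead invoke $S_n$-stability of $\ker\mathcal{B}$ to reduce the whole question to the single polynomial $(k)-(l)$, and detect its non-membership through the coefficient of the two-variable class $(k,l)$ in its Dunkl image. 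This buys a cleaner reduction and a more robust detector: it needs only that $(k,l)$ is independent of the one-variable classes in $\mathcal{L}_{0,c}[p-1]$, which Propositions~\ref{p2.28} and~\ref{p2.29} guarantee, and the coefficient $\pm 4$ is nonzero for every odd $p$. Two caveats, neither fatal. First, you apply the formulas of Proposition~\ref{p2.29} at degree $j=p$, outside their stated range $2\le j\le p-1$; this is legitimate but deserves a sentence of justification, namely that their derivation only uses relations lying in $\mathcal{I}[p-1]=\ker\mathcal{B}[p-1]$, an equality which follows from Proposition~\ref{p2.29} at degree $p-1$ (the paper's own proof of Proposition~\ref{p2.30} makes the same implicit extension). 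Second, your intermediate formula $D_{y_k-y_l}\bigl((k)-(l)\bigr)=(p-3)\bigl((k)+(l)\bigr)+2(p-2)(k,l)$ and the paper's displayed value $-\bigl((k)+(l)\bigr)$ cannot both be exact, since they differ by a nonzero multiple of $(k,l)$ modulo one-variable classes; at least one of the two reductions contains a slip (a direct expansion suggests the true $(k,l)$-coefficient is $+4$ rather than $2(p-2)\equiv-4$). But because each claimed image is visibly nonzero in $\mathcal{L}_{0,c}[p-1]$, both treatments reach the same correct conclusion $a_k=a_l$, and your final step coincides with the paper's, so the proof stands.
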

\begin{proof}
From Proposition~\ref{p2.28}, $(1)$, $(2)$, \dots, $(n-1)$, and $(1,2)$ generate all of $\mathcal{L}_{0,c}[p]$. But by Proposition~\ref{p2.27}, $(1)-(1,2)+(2)\in\ker\mathcal{B}$. Thus $(1,2)$ is not needed and $\dim\mathcal{L}_{0,c}[p]\le n-1$.

To show that the dimension is exactly $n-1$, we consider a linear combination $f=a_1x_1^p+a_2x_2^p+\dots +a_{n-1}x_{n-1}^p$. Using the Dunkl operator $D_{y_1-y_2}$, we compute that (for $r\ne i,j$)
\begin{align*}
D_{y_i-y_j}x_i^p&=-(j),\\
D_{y_i-y_j}x_2^p&=(i),\\
D_{y_i-y_j}x_r^p&=(i)-(j).\\
\end{align*}

This implies that $a_i=a_j$ for all $i,j$. But then $$D_{y_1-y_2}\left(x_1^p+x_2^p+\dots +x_{n-1}^p\right)=(n-2) \left[(1)-(2)\right]\ne 0.$$ Thus there does not exist such a linear combination and the dimension is exactly $n-1$.
\end{proof}
\begin{prop}
\label{p2.31}
The dimension of $\mathcal{L}_{0,c}[p+1]$ is $1$.
\end{prop}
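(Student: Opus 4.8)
The plan is to prove $\dim\mathcal{L}_{0,c}[p+1]=1$ by squeezing this dimension between the bounds $1$ and $1$. By Proposition~\ref{p2.28} the classes $(1),(2),\dots,(n-1),(1,2)$ span $\mathcal{L}_{0,c}[p+1]$, so the upper bound $\dim\le 1$ will follow once I show that every pure power $(i)=x_i^{p+1}$ already lies in $\ker\mathcal{B}$, leaving only $(1,2)$. For this I would multiply the degree-$p$ singular polynomial of Proposition~\ref{p2.27}, namely $x_i^p-x_ix_j^{p-1}+x_j^p\in\ker\mathcal{B}[p]$, by $x_i$; since $\ker\mathcal{B}$ is an ideal, the product $x_i^{p+1}-x_i^2x_j^{p-1}+x_ix_j^p$ lies in $\ker\mathcal{B}[p+1]$. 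Working in $\mathcal{M}'_{0,c}$, where exponents may be shifted freely so that $x_i^2x_j^{p-1}$ and $x_ix_j^p$ both represent $(i,j)$, this product collapses to $(i)-(i,j)+(i,j)=(i)$. Hence $(i)\in\ker\mathcal{B}[p+1]$ for all $i$, and $\dim\mathcal{L}_{0,c}[p+1]\le1$.

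For the lower bound I would check that $(1,2)$ survives in the quotient by exhibiting a Dunkl operator that does not send it into $\ker\mathcal{B}$. Recall from Proposition~\ref{p2.30} that $\mathcal{L}_{0,c}[p]$ has dimension $n-1$ with basis $(1),\dots,(n-1)$, and that the relations $(i)-(i,j)+(j)$ together with their three-variable analogues coming from the ideal $\mathcal{I}\subset\ker\mathcal{B}$ make every monomial in $\mathcal{L}_{0,c}[p]$ equal to the sum $\sum_{s\in S}(s)$ over its set $S$ of variables. I would then fix the representative $f=x_1^ax_2^b$ of $(1,2)$ with $a+b=p+1$ and $a,b\ge1$, and compute $D_{y_3}f$ in $\mathcal{M}_{0,c}$; only the transpositions $\sigma_{31}$ and $\sigma_{32}$ contribute, so a direct expansion gives
\begin{equation*}
-D_{y_3}f=-x_2^b\big(x_1^{a-1}+x_1^{a-2}x_3+\dots+x_3^{a-1}\big)-x_1^a\big(x_2^{b-1}+\dots+x_3^{b-1}\big).
\end{equation*}
Reducing each monomial by the additivity rule above, and separating in each geometric-type sum the two extreme terms (two variables) from the interior terms (three variables), I expect to obtain
\begin{equation*}
-D_{y_3}f\equiv -(a+b-1)\big[(1)+(2)\big]-(a+b-2)\,(3)\quad\text{in }\mathcal{L}_{0,c}[p].
\end{equation*}
Since $a+b=p+1$, the coefficient of $(1)$ and of $(2)$ is $-p\equiv0$, while that of $(3)$ is $-(p-1)\equiv1$, so $-D_{y_3}f\equiv(3)=x_3^p$.

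As $n=kp+1\ge p+1\ge4$, the class $x_3^p$ is a genuine basis vector of $\mathcal{L}_{0,c}[p]$ and in particular is nonzero. Because $\ker\mathcal{B}$ is a submodule, the assumption $f\in\ker\mathcal{B}$ would force $D_{y_3}f\in\ker\mathcal{B}[p]$, contradicting $D_{y_3}f\equiv-x_3^p\ne0$. Therefore $(1,2)\notin\ker\mathcal{B}[p+1]$, its image spans a nonzero line, and combined with the upper bound this gives $\dim\mathcal{L}_{0,c}[p+1]=1$.

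I expect the only real obstacle to be the bookkeeping in the reduction of $-D_{y_3}f$: one must count correctly that each of the two sums contributes exactly two two-variable monomials and $a-2$ (respectively $b-2$) three-variable monomials, since it is precisely this count that forces the coefficients of $(1)$ and $(2)$ to equal $-(a+b-1)=-p$ and hence vanish modulo $p$. The edge cases $a=1$ or $b=1$ should be checked separately but collapse to the same conclusion. Everything else follows immediately from the singular polynomial of Proposition~\ref{p2.27}, the generating set of Proposition~\ref{p2.28}, and the description of $\mathcal{L}_{0,c}[p]$ in Proposition~\ref{p2.30}.
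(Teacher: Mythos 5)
Your proposal is correct and follows essentially the same route as the paper: the upper bound is obtained exactly as in the paper's proof, by multiplying the degree-$p$ singular polynomial $(i)-(i,j)+(j)$ of Proposition~\ref{p2.27} by $x_i$ and collapsing in $\mathcal{M}'_{0,c}$ to get $(i)\in\ker\mathcal{B}[p+1]$, and the lower bound is the same one-step Dunkl computation on the surviving generator $(1,2)$ reduced modulo the degree-$p$ relations. The only cosmetic difference is that you apply $D_{y_3}$ to a general representative $x_1^ax_2^b$ (legitimate, since for $t=0$ and $n\equiv 1\pmod{p}$ one has $D_{y_3}=\sum_{j\ne 3}D_{y_3-y_j}$, so it preserves $\ker\mathcal{B}$), whereas the paper checks $D_{y_1-y_2}(x_1x_2^p)\ne 0$ in $\mathcal{L}_{0,c}[p]$.
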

\begin{proof}
From Proposition~\ref{p2.28}, the set $\{x_1^{p+1},x_2^{p+1},\dots, x_{n-1}^{p+1},x_1x_2^p\}$ generates $\mathcal{L}_{0,c}[p+1]$. However, note that $(i)-(i,j)+(j)\in\ker\mathcal{B}[p]$. Multiplying by $(i)$ yields $(i)-(i,j)+(i,j)=(i)\in\ker\mathcal{B}[p+1]$. Hence $\dim\mathcal{L}_{0,c}[p+1]\le 1$. To prove equality, it suffices to check that in $\mathcal{L}_{0,c}$, $D_{y_1-y_2}x_1x_2^p=(1,2)+(2)=(1,2)\ne 0$.
\end{proof}
\begin{prop}
\label{p2.32}
The dimension of $\mathcal{L}_{0,c}[p+2]$ is $0$.
\end{prop}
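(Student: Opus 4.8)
The plan is to show that $\ker\mathcal{B}[p+2]=\mathcal{M}'_{0,c}[p+2]$, i.e.\ that every degree-$(p+2)$ monomial class lies in the kernel, so that the quotient vanishes. As in Proposition \ref{p2.11} (the $p=2$ analogue in degree $4$), the engine is that $\ker\mathcal{B}$ is an ideal: I would take kernel elements of degree $p$, multiply them by degree-$2$ monomials, and combine the results with the ideal $\mathcal{I}$ to sweep out all of $\mathcal{M}'_{0,c}[p+2]$. Throughout I work in $\mathcal{M}'_{0,c}$, so a tuple $(s_1,\dots ,s_b)$ denotes the common class of all degree-$(p+2)$ monomials supported on $\{x_{s_1},\dots ,x_{s_b}\}$; the collapse of exponents in this notation is exactly what makes the multiplications below clean.

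The basic input is the degree-$p$ singular polynomial $g_{ij}=(i)-(i,j)+(j)\in\ker\mathcal{B}[p]$ from Proposition \ref{p2.27}. I would first multiply $g_{ij}$ by the three degree-$2$ monomials $x_i^2$, $x_ix_j$, and $x_k^2$ (with $k\ne i,j$). In $\mathcal{M}'_{0,c}[p+2]$ the products collapse to $(i)$, to $(i,j)$, and to $(i,k)-(i,j,k)+(j,k)$ respectively, since in each product two of the three tuples acquire the same support. The first two computations show directly that all one-variable classes $(i)$ and all two-variable classes $(i,j)$ lie in $\ker\mathcal{B}[p+2]$; feeding these into the third relation then forces $(i,j,k)\in\ker\mathcal{B}[p+2]$, so all three-variable classes are in the kernel as well. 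This settles the base cases $b\le 3$.

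For $b\ge 4$ distinct variables I would induct on $b$ using the families $(\dagger^q)_{p+2}\subset\mathcal{I}[p+2]\subseteq\ker\mathcal{B}[p+2]$ furnished by Propositions \ref{p2.25} and \ref{p2.26}. The relation $(i,\mathbf{l})-(i,j,\mathbf{l})+(i,j,k,\mathbf{l})-(i,k,\mathbf{l})=0$ in $\ker\mathcal{B}$, with $\mathbf{l}=(l_1,\dots ,l_{q-3})$, expresses the single $q$-variable term $(i,j,k,\mathbf{l})$ as a combination of tuples each supported on at most $q-1$ variables. By the reshuffling freedom established in Proposition \ref{p2.25}, every $q$-variable class arises this way, so the inductive hypothesis (all classes on $\le q-1$ variables already lie in $\ker\mathcal{B}[p+2]$) forces every $q$-variable class into the kernel. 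Running $q$ from $4$ up to the maximal number of variables available (at most $n-1$ after the substitution for $x_n$) exhausts all monomial classes, giving $\ker\mathcal{B}[p+2]=\mathcal{M}'_{0,c}[p+2]$ and hence $\dim\mathcal{L}_{0,c}[p+2]=0$. Finally, since $\ker\mathcal{B}$ is an ideal now containing all of degree $p+2$, it contains $\mathcal{M}_{0,c}[j]=\mathcal{M}_{0,c}[j-p-2]\cdot\mathcal{M}_{0,c}[p+2]$ for every $j>p+2$, so $\dim\mathcal{L}_{0,c}[j]=0$ for all $j\ge p+2$.

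I expect the only genuine obstacle to be bookkeeping rather than ideas: one must verify that the exponent-collapse in the degree-$2$ multiplications really produces the stated tuples (so that, for instance, $x_i^2\cdot g_{ij}$ reduces to the single class $(i)$ rather than a longer combination), and that the $(\dagger^q)$ reduction strictly lowers the variable count at each step so that the induction is well-founded. Both are routine in the tuple notation, so the argument amounts to organizing the three multiplications and the induction carefully.
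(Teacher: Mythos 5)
Your proof is correct and takes essentially the same approach as the paper: the paper puts $(i)$ and $(i,j)$ into $\ker\mathcal{B}[p+2]$ by multiplying $x_i^{p+1}\in\ker\mathcal{B}[p+1]$ (Proposition \ref{p2.31}, itself obtained from the degree-$p$ singular element of Proposition \ref{p2.27}) by degree-one monomials, and then cites Proposition \ref{p2.28} for the fact that these classes together with $\mathcal{I}[p+2]$ span all of $\mathcal{M}_{0,c}[p+2]$. Your deviations --- multiplying $(i)-(i,j)+(j)$ by degree-two monomials directly, and re-running the $(\dagger^q)$ variable-count induction inline instead of citing Proposition \ref{p2.28} --- are organizational rather than conceptual.
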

\begin{proof}
From Proposition~\ref{p2.31}, $x_i^{p+2}=(i)\in\ker \mathcal{B}[p+2]$, and $x_ix_j^{p+1}=(i,j)\in\ker\mathcal{B}[p+2]$. This covers all polynomials described in Proposition~\ref{p2.28}, implying that $\ker\mathcal{B}[p+2]$ contains all of $\mathcal{M}_{0,c}[p+2]$.
\end{proof}
\begin{prop}
\label{p2.33}
For all $m>p+2$, the dimension of $\mathcal{L}_{0,c}[v]$ is $0$.
\end{prop}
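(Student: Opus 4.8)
The plan is to leverage the single structural fact that $\ker\mathcal{B}$ is an ideal in the polynomial ring $S\mathfrak{h}^*$, a property already recorded immediately after Definition \ref{df1.5}. Given this, the statement should follow almost formally from the base case established in Proposition \ref{p2.32}, which asserts that $\mathcal{L}_{0,c}[p+2]=0$; equivalently, $\ker\mathcal{B}[p+2]=\mathcal{M}_{0,c}[p+2]$, i.e.\ \emph{every} homogeneous polynomial of degree $p+2$ already lies in $\ker\mathcal{B}$.

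First I would fix an arbitrary degree $m>p+2$ and a homogeneous polynomial $f\in\mathcal{M}_{0,c}[m]$, and reduce to the case where $f$ is a single monomial by linearity. Writing $f$ in terms of the variables $x_1,\dots,x_{n-1}$ (after the usual substitution for $x_n$), each monomial has total degree $m\ge p+3$, so it factors as a product of a monomial of degree exactly $p+2$ and a monomial of degree $m-(p+2)\ge 1$. The degree $p+2$ factor lies in $\ker\mathcal{B}$ by Proposition \ref{p2.32}.

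Next I would invoke the ideal property: since $\ker\mathcal{B}$ is an ideal and one factor of each monomial lies in $\ker\mathcal{B}$, the whole monomial lies in $\ker\mathcal{B}$, and hence so does $f$. This gives the containment $\mathcal{M}_{0,c}[m]\subseteq\ker\mathcal{B}[m]$, and therefore $\mathcal{L}_{0,c}[m]=\mathcal{M}_{0,c}[m]/\ker\mathcal{B}[m]=0$. Equivalently, one may phrase the argument as an induction on $m$: multiplying any degree $m-1$ element of $\ker\mathcal{B}$ by a linear form keeps it in the ideal, so once the ideal fills out degree $p+2$ it fills out every higher degree.

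There is essentially no hard step here; the only point requiring a moment's care is ensuring the factorization into a degree $p+2$ piece is available, which holds precisely because $m\ge p+3$. (The apparent index mismatch between $m$ and $v$ in the statement is harmless: both denote the degree, and the conclusion $\dim\mathcal{L}_{0,c}[m]=0$ holds uniformly for all $m>p+2$.)
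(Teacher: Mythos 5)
Your proposal is correct and uses exactly the same idea as the paper: the paper's proof is simply the one-line observation that the result ``follows from the fact that $\ker\mathcal{B}$ is an ideal,'' relying on Proposition \ref{p2.32} having filled degree $p+2$. Your write-up just spells out the monomial factorization and the induction that the paper leaves implicit.
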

\begin{proof}
This follows from the fact that $\ker\mathcal{B}$ is an ideal.
\end{proof}
\begin{theorem}
\label{th2.34}
The Hilbert series for $\mathcal{L}_{0,c}$ over a field with prime characteristic $p$ is $$h_{\mathcal{L}_{0,c}}(z)=\left(\frac{1-z^p}{1-z}\right)\left(1+(n-2)z+z^2\right).$$
\end{theorem}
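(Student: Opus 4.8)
The plan is to observe that every graded dimension $\dim \mathcal{L}_{0,c}[j]$ has already been determined in the preceding propositions, so the only remaining work is bookkeeping: expand the proposed product into a power series and check, degree by degree, that its coefficients agree with the computed dimensions. I would begin by rewriting $\frac{1-z^p}{1-z}=1+z+\cdots+z^{p-1}$, so that the claimed right-hand side becomes $(1+z+\cdots+z^{p-1})(1+(n-2)z+z^2)$, a finite polynomial whose coefficient of $z^j$ I can read off (using Iverson-bracket indicators) as $[0\le j\le p-1]+(n-2)\,[1\le j\le p]+[2\le j\le p+1]$.

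Next I would assemble the dimension data into a single list. For odd $p$ these come from Proposition \ref{p2.13} ($j=0$ gives $1$), Proposition \ref{p2.14} ($j=1$ gives $n-1$), Propositions \ref{p2.19} and \ref{p2.29} ($2\le j\le p-1$ gives $n$), Proposition \ref{p2.30} ($j=p$ gives $n-1$), Proposition \ref{p2.31} ($j=p+1$ gives $1$), and Propositions \ref{p2.32} and \ref{p2.33} ($j\ge p+2$ gives $0$). For $p=2$ the odd-characteristic analysis does not apply, so I would invoke Theorem \ref{th2.12} directly, noting that its series $(1+z)(1+(n-2)z+z^2)$ is precisely the $p=2$ specialization of the claimed formula.

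Then I would compare the two expressions slot by slot: at $j=0$ both give $1$; at $j=1$ both give $1+(n-2)=n-1$; for $2\le j\le p-1$ both give $1+(n-2)+1=n$; at $j=p$ the first indicator drops out and both give $(n-2)+1=n-1$; at $j=p+1$ only the last indicator survives and both give $1$; and for $j\ge p+2$ all indicators vanish and the dimension is $0$. Since these exhaust all degrees, the Hilbert series and the claimed product coincide.

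The only point requiring a little care, rather than any genuine obstacle, is the small-$p$ degeneracies where the interior range $2\le j\le p-1$ collapses: for $p=3$ this range is the single value $j=2$, and one must ensure the slot $j=3=p$ is governed by Proposition \ref{p2.30}, consistent with the remark that $\dim\mathcal{L}_{0,c}[3]=n-1$ when $p=3$; for $p=2$ the range is empty and the entire count rests on Theorem \ref{th2.12}. Once these boundary cases are verified to fit the same coefficient formula, the check is complete, since all of the representation-theoretic content was already carried out in the earlier singular-polynomial and Dunkl-operator computations.
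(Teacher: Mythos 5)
Your proposal is correct and is essentially the paper's own proof: the paper likewise disposes of the theorem by expanding $\left(\frac{1-z^p}{1-z}\right)\left(1+(n-2)z+z^2\right)$ and matching coefficients against the dimensions computed in Propositions \ref{p2.13}, \ref{p2.14}, \ref{p2.19}, \ref{p2.29}, \ref{p2.30}, \ref{p2.31}, \ref{p2.32}, and \ref{p2.33}, with the $p=2$ case handled by Theorem \ref{th2.12}. Your explicit treatment of the boundary cases ($p=2$, and $j=3=p$ when $p=3$ via the remark after Proposition \ref{p2.22}) is a careful spelling-out of details the paper leaves implicit, but the route is the same.
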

\begin{proof}
We simply expand and match coefficients from the previous Propositions. Note that this case also covers $p=2$ from Theorem~\ref{th2.12}.
\end{proof}

\section{The case $t=1$}

In this case, the Dunkl operator is $$D_{y_i-y_j}=\partial_{x_i}-\partial_{x_j}-c\sum_{k\ne i}\frac{1-\sigma_{ik}}{x_i-x_k}+c\sum_{\ell\ne j}\frac{1-\sigma_{j\ell}}{x_j-x_{\ell}}.$$ We will again study each gradation (by degree) explicitly from $\deg =0$ upwards and find a collection of polynomials which belong in $\ker\mathcal{B}$. We will prove that any polynomial not in the span of that collection is not in $\ker\mathcal{B}$ using explicit Dunkl operator actions. We will also utilize a result from \cite{balagovic2013representations} which constricts the form of the Hilbert series of $\mathcal{L}_{1,c}$.

\subsection{Characteristic $p=2$}
From Proposition~\ref{p1.6} (\cite{balagovic2013representations}), we know that the Hilbert series is of the form $h_{\mathcal{L}_{1,c}}(z)=(1+z)^{n-1}Q\left(z^2\right)$ for some integer polynomial $Q$. Assume $c$ is transcendental over $\mathbb{F}_2$. We will let $Q\left(z^2\right)=Q_0+Q_2z^2+Q_4z^4+\dots $ and compute term by term: each time we compute $\dim\mathcal{L}_{1,c}[d]$ for some even $d$, we can expand $h_{\mathcal{L}_{1,c}}(z)\left[z^d\right]$ to find $Q_d$.

There is a basis of $\mathcal{M}_{t,c}$ consisting of elements of $\mathbb{F}_2[c,x_1,x_2,\dots ,x_n]$, so we assume that all coefficients are from $\mathbb{F}_2[c]$.

Any polynomial $f$ can be graded by powers of $c$. We divide out by a power of $c$ so that $f\not\equiv 0\pmod{c}$. Since the Dunkl operator acts with two separate gradations, we may consider them separately.
\begin{definition}
For a given polynomial $f\in\mathbb{F}_2[c,x_1,\dots ,x_{n-1}]$, let $f=\sum_{k\ge 0}c^kf^{(k)}$ where each $f^{(k)}\in\mathbb{F}_2[x_1,\dots ,x_{n-1}]$ and $f^{(0)}\ne 0$.
\end{definition}
\begin{definition}
Denote $\alpha_{ij}=\partial_{x_i}-\partial_{x_j}$ and $\beta_{ij}=-\sum_{k\ne i}\frac{1-\sigma_{ik}}{x_i-x_k}+\sum_{\ell\ne j}\frac{1-\sigma_{j\ell}}{x_j-x_{\ell}}$, so that $D_{y_i-y_j}=\alpha_{ij}+c\beta_{ij}$.
\end{definition} We will use this notation throughout the rest of the paper. We will again use the substitution $x_n=x_1+x_2+\dots +x_{n-1}$.

\begin{prop}
\label{p3.1}
The dimension of $\mathcal{L}_{1,c}[0]$ is $1$.
\end{prop}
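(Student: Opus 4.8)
The plan is to observe that the statement is really about the bottom graded component, where everything collapses to scalars. Since $\mathcal{M}_{1,c}\cong S\mathfrak{h}^*$ as graded vector spaces (Proposition \ref{p1.2}), the degree-zero piece $\mathcal{M}_{1,c}[0]$ consists precisely of the constants, i.e. the one-dimensional space $\Bbbk\cdot 1$. Thus the only work is to check that passing to the quotient $\mathcal{L}_{1,c}=\mathcal{M}_{1,c}/\ker\mathcal{B}$ does not kill this constant, which amounts to showing $\ker\mathcal{B}[0]=0$.

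To see this, I would invoke the defining properties of the contravariant form $\mathcal{B}$ from Definition \ref{df1.4}. A degree-zero element of $\ker\mathcal{B}$ is a scalar $a\in\Bbbk$ with $\mathcal{B}(f,a)=0$ for every $f\in S\mathfrak{h}$. Because $\mathcal{B}$ vanishes between components of different degree, the only test that matters is against $f=1\in(S\mathfrak{h})_0$, and the last property of $\mathcal{B}$ gives $\mathcal{B}(1,a)=1\cdot a=a$. Hence $a=0$, so $\ker\mathcal{B}[0]=0$ and the natural map $\mathcal{M}_{1,c}[0]\to\mathcal{L}_{1,c}[0]$ is an isomorphism onto a one-dimensional space.

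This mirrors exactly the reasoning behind Propositions \ref{p2.2} and \ref{p2.13} in the $t=0$ setting, where the same observation (``all elements are constants'') suffices; the value of $t$ plays no role at the bottom of the grading since the Dunkl operators lower degree and therefore act as zero on constants. I do not anticipate any genuine obstacle here: the computation is immediate and the normalization $\mathcal{B}(1,1)=1$ is exactly what guarantees the constant survives. The only point requiring a word of care is to confirm, via the grading property of $\mathcal{B}$, that no higher-degree pairing can contribute to the degree-zero kernel, which is automatic from the definition.
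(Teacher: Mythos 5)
Your proposal is correct and follows essentially the same reasoning as the paper, whose entire proof is the one-line observation ``All elements are constants''; you have simply made explicit the two implicit facts (that $\mathcal{M}_{1,c}[0]=\Bbbk\cdot 1$ and that $\mathcal{B}(1,1)=1$ forces $\ker\mathcal{B}[0]=0$) that the paper takes for granted.
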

\begin{proof}
All elements are constants.
\end{proof}
\begin{corollary}
\label{c3.1.1}
We have that $Q_0=1$.
\end{corollary}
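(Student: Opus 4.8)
The plan is to extract the constant coefficient $[z^0]$ from the two available expressions for the Hilbert series $h_{\mathcal{L}_{1,c}}(z)$ and equate them. On one hand, by the very definition of the Hilbert series (Definition \ref{d1.11}), the coefficient of $z^0$ in $h_{\mathcal{L}_{1,c}}(z) = \sum_{i\ge 0}\dim\mathcal{L}_{1,c}[i]\,z^i$ is exactly $\dim\mathcal{L}_{1,c}[0]$, which is $1$ by Proposition \ref{p3.1}. On the other hand, by Proposition \ref{p1.6} we have the factored form $h_{\mathcal{L}_{1,c}}(z) = (1+z)^{n-1}Q(z^2)$, and with the expansion $Q(z^2) = Q_0 + Q_2 z^2 + Q_4 z^4 + \cdots$ adopted above, I would compute the constant term of this product.

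The key observation is that each factor contributes its own constant term: $(1+z)^{n-1}$ has constant term $1$, and $Q(z^2)$ has constant term $Q_0$ (since the lowest-degree term of $Q(z^2)$ is $Q_0$, every other term being divisible by $z^2$). Hence the product has constant term $1\cdot Q_0 = Q_0$. Equating the two computations of $[z^0]h_{\mathcal{L}_{1,c}}(z)$ then gives $Q_0 = 1$, which is the claim.

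There is essentially no obstacle here: this is a direct coefficient comparison, and the only thing to be careful about is confirming that no higher-degree term of either factor can contribute to the $z^0$ coefficient, which is immediate since both $(1+z)^{n-1}$ and $Q(z^2)$ have nonnegative powers of $z$ only. This corollary simply records the first term of the term-by-term strategy announced before Proposition \ref{p3.1}, namely that computing $\dim\mathcal{L}_{1,c}[d]$ for even $d$ pins down $Q_d$; here $d=0$ yields $Q_0$.
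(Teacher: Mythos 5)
Your proof is correct and matches the paper's intent exactly: the paper states this corollary without proof, since it is precisely the coefficient comparison you spell out, namely that $[z^0]\bigl((1+z)^{n-1}Q(z^2)\bigr)=Q_0$ must equal $\dim\mathcal{L}_{1,c}[0]=1$ from Proposition \ref{p3.1}. Nothing is missing; this is just the first instance of the term-by-term strategy announced at the start of the section.
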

\begin{prop}
\label{p3.2}
The dimension of $\mathcal{L}_{1,c}[1]$ is $n-1$.
\end{prop}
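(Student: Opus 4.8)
The plan is to run exactly the degree-one argument used in the $t=0$ case (Propositions \ref{p2.3} and \ref{p2.14}), adapted to the $t=1$ Dunkl operator. First I would record the cheap inequality: after the substitution $x_n=x_1+\dots+x_{n-1}$ the space $\mathcal{M}_{1,c}[1]\cong\mathfrak{h}^*$ is spanned by $x_1,\dots,x_{n-1}$ and so has dimension $n-1$, giving $\dim\mathcal{L}_{1,c}[1]\le n-1$ for free. All the content is in the reverse inequality, i.e. in showing $\ker\mathcal{B}[1]=0$. Because $\mathcal{B}$ vanishes between graded pieces of different degree and restricts to the identity pairing in degree $0$, a degree-$1$ element $q$ lies in $\ker\mathcal{B}$ precisely when it is singular: for $y\in\mathfrak{h}$ one has $\mathcal{B}(y,q)=[x^0]D_y(q)=D_y(q)$, since $D_y(q)$ is already a constant. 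Thus the proposition reduces to the statement that there is no nonzero degree-$1$ singular polynomial.

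The key step is to evaluate $D_{y_i-y_j}=\alpha_{ij}+c\beta_{ij}$ on a single variable $x_m$. Here the computation is short because the difference quotients collapse: $(1-\sigma_{ik})x_m$ is nonzero only when $m\in\{i,k\}$, in which case $\tfrac{(1-\sigma_{ik})x_m}{x_i-x_k}=\pm1$. Carrying this out for both sums in $\beta_{ij}$ and adding the derivative term $\alpha_{ij}x_m=\delta_{im}-\delta_{jm}$, I expect to obtain the clean formula $D_{y_i-y_j}x_m=(1-cn)(\delta_{im}-\delta_{jm})$; that is, the operator acts by $1-cn$ on $x_i$, by $-(1-cn)$ on $x_j$, and by $0$ on every other variable. (As a sanity check, summing over $m$ gives $0$, consistent with the relation $x_1+\dots+x_n=0$ in $\mathfrak{h}^*$.) Writing a candidate singular vector as $f=\sum_{m<n}a_m x_m$ with $a_m\in\mathbb{F}_2[c]$ and applying $D_{y_i-y_n}$ for each $i<n$ then yields $(1-cn)\,a_i=0$.

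The only point requiring care is that the scalar $1-cn$ is genuinely nonzero in $\Bbbk$, which is exactly where the hypotheses enter. Since $p\mid n-1$ we have $n\equiv1\pmod p$, so $n$ is a unit and $1-cn$ is a degree-one polynomial in $c$; in the present characteristic-$2$ setting this is $1-cn=1+c$, which does not vanish because $c$ is transcendental over $\mathbb{F}_2$. Hence each $a_i=0$, so $f=0$, giving $\ker\mathcal{B}[1]=0$ and therefore $\dim\mathcal{L}_{1,c}[1]=n-1$. The ``hard part'' is really only the bookkeeping in the Dunkl computation: one must track the $\pm1$ contributions so that the coefficient comes out to precisely $1-cn$, and then recognize that the transcendence of $c$ is exactly what forbids an accidental degree-$1$ singular vector.
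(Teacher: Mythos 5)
Your proof is correct, and it finishes the argument by a different mechanism than the paper does. Your key formula $D_{y_i-y_j}x_m=(1-cn)(\delta_{im}-\delta_{jm})$ checks out (the two difference-quotient sums contribute $-c(n-1)$ and $-c$ on $x_i$, giving $1-cn$ there, $-(1-cn)$ on $x_j$, and $0$ elsewhere), and the reduction of $\ker\mathcal{B}[1]$ to degree-one singular vectors is exactly right. The paper's proof of Proposition \ref{p3.2} never computes this scalar: it normalizes $f$ so that its $c^0$-component $f^{(0)}$ is nonzero (possible because coefficients lie in $\mathbb{F}_2[c]$ with $c$ transcendental), and then observes that the $c^0$-component of $D_{y_i-y_n}f$ is just $\alpha_{in}f^{(0)}=a_i$, which must vanish, contradicting $f^{(0)}\neq 0$. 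So the paper only uses the derivative part $\alpha_{ij}$ of the Dunkl operator and needs neither $p\mid n-1$ nor the precise value of the $\beta$-contribution; this leading-term-in-$c$ technique is also the template reused in higher degrees (Propositions \ref{p3.3} and \ref{p3.7}), where full Dunkl computations would be unwieldy. What your route buys instead is a sharper statement about genericity: the exact scalar $1-cn$ shows that a degree-one singular vector can exist only when $cn=1$, so for this gradation ``generic'' means precisely $c\neq n^{-1}$ (in characteristic $2$ with $n$ odd, $c\neq 1$), whereas the paper's normalization trick silently consumes the full transcendence hypothesis. Both arguments are sound; yours is more explicit, the paper's scales better.
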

\begin{proof}
Under the substitution for $x_n$, we have $\dim\mathcal{L}_{1,c}[1]\le n-1$. To show equality, assume that $f=a_1x_1+a_2x_2+\dots +a_{n-1}x_{n-1}\in\ker\mathcal{B}$ where $a_i\in\mathbb{F}_2[c]$. Then $\alpha_{in}f^{(0)}=(\partial_{x_i}-\partial_{x_n})f^{(0)}=a_i=0$ ensures that $f^{(0)}=0$, which contradicts our assumption of a nonzero constant term.
\end{proof}
\begin{remark}
We actually get this for free by the form of the Hilbert series given in \cite{balagovic2013representations}.
\end{remark}
\begin{prop}
\label{p3.3}
The dimension of $\mathcal{L}_{1,c}[2]$ is $\binom{n}{2}$.
\end{prop}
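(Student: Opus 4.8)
The plan is to show $\ker\mathcal{B}[2]=0$, from which $\dim\mathcal{L}_{1,c}[2]=\dim\mathcal{M}_{1,c}[2]=\binom{n}{2}$ follows at once: after the substitution $x_n=x_1+\dots+x_{n-1}$ the space $\mathcal{M}_{1,c}[2]$ has basis $\{x_i^2\}_{i<n}\cup\{x_ix_j\}_{i<j<n}$, of size $(n-1)+\binom{n-1}{2}=\binom{n}{2}$. By Proposition \ref{p3.2} we have $\ker\mathcal{B}[1]=0$, and since $\ker\mathcal{B}$ is a submodule, any $f\in\ker\mathcal{B}[2]$ satisfies $D_{y_i-y_j}f\in\ker\mathcal{B}[1]=0$ for all $i,j$. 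Hence $\ker\mathcal{B}[2]$ consists exactly of the degree-$2$ singular polynomials, and it suffices to prove there are none other than $0$.

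Suppose then that $f=\sum_{k\ge 0}c^k f^{(k)}$ is singular with $f^{(0)}\neq 0$. Expanding $D_{y_i-y_j}f=\alpha_{ij}f+c\beta_{ij}f=0$ by powers of $c$ (legitimate since $c$ is transcendental) yields, in characteristic $2$, the hierarchy $\alpha_{ij}f^{(0)}=0$ together with $\alpha_{ij}f^{(k)}=\beta_{ij}f^{(k-1)}$ for $k\ge 1$. First I would extract $f^{(0)}$ from the bottom equation: taking $j=n$ and using the canonical representative (with no $x_n$), $\alpha_{in}$ acts simply as $\partial_{x_i}$, so $\partial_{x_i}f^{(0)}=0$ for every $i<n$. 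Because the derivative of a square vanishes in characteristic $2$, this forces $f^{(0)}=\sum_{i<n}a_i x_i^2$; equivalently, retaining all $n$ variables, $f^{(0)}=\sum_{m=1}^{n}d_m x_m^2$ modulo $s:=x_1+\dots+x_n$, a pure sum of squares. Note that $f^{(0)}\neq 0$ in $S\mathfrak{h}^*$ precisely when the $d_m$ are \emph{not} all equal, since $\sum_m x_m^2=(x_1+\dots+x_n)^2$ lies in the defining ideal.

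The decisive step is to read the $c^1$ equation $\alpha_{ij}f^{(1)}=\beta_{ij}f^{(0)}$ as a solvability (integrability) obstruction. A direct computation of $\beta_{ij}$ on a sum of squares gives, modulo $s$, the linear form $(d_i+D)x_i+(d_j+D)x_j$, where $D=\sum_m d_m$; the hypothesis that $n$ is odd (which is exactly $p\mid n-1$ for $p=2$) is essential here, because it forces the coefficient $(n-2)d_i$ to collapse to $d_i$ rather than to $0$. On the other hand, for \emph{any} degree-$2$ polynomial $f^{(1)}$ the linear form $\alpha_{ij}f^{(1)}=(\partial_{x_i}-\partial_{x_j})f^{(1)}$ has equal coefficients on $x_i$ and on $x_j$. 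Matching the two sides modulo $s$ (so that corresponding $x_m$-coefficients agree up to a single common constant $\mu_{ij}$) and comparing the $x_i$- and $x_j$-coefficients forces $d_i+D=d_j+D$, i.e. $d_i=d_j$, for every pair $i\neq j$. Thus all $d_m$ are equal and $f^{(0)}=0$, contradicting $f^{(0)}\neq 0$. This rules out any nonzero degree-$2$ singular polynomial and completes the proof.

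I expect the main obstacle to be the bookkeeping in computing $\beta_{ij}f^{(0)}$ correctly modulo the ideal $(x_1+\dots+x_n)$: one must track the contributions of the index $n$ (or, equivalently, check that replacing the lift $\tilde f^{(0)}$ by $\tilde f^{(0)}+sL$ leaves the right-hand side unchanged modulo $s$, which holds because $\beta_{ij}(sL)\equiv 0 \pmod s$), and it is exactly at this point that the parity of $n$ enters the argument. By contrast, the reduction of $f^{(0)}$ to a sum of squares and the final coefficient comparison are routine linear algebra over $\mathbb{F}_2$.
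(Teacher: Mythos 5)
Your proof is correct and takes essentially the same route as the paper's: both reduce to showing there are no degree-$2$ singular polynomials (using $\ker\mathcal{B}[1]=0$), expand $f=\sum_k c^k f^{(k)}$ in powers of the transcendental $c$ so that the $c^0$ equation forces $f^{(0)}$ to be a sum of squares, and then use the $c^1$-level equation $\alpha_{ij}f^{(1)}=\beta_{ij}f^{(0)}$ together with the characteristic-$2$ fact that $\partial_{x_i}f^{(1)}$ has no $x_i$-term to reach a contradiction. The only difference is bookkeeping: the paper normalizes $f^{(0)}=x_1^2+\dots+x_C^2$ and applies the single pair of operators $\alpha_{1n},\beta_{1n}$ (noting the right-hand side always contains an $x_1$-term), whereas you run the coefficient comparison symmetrically over all pairs $(i,j)$ to force $d_i=d_j$ and hence $f^{(0)}\equiv 0$.
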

\begin{proof}
Under the substitution for $x_n$, we can concern ourselves only with polynomials in $\{x_1,x_2,\dots ,x_{n-1}\}$. We show that no singular polynomials exist, which implies that $\dim \mathcal{L}_{1,c}[2]=\binom{n}{2}$. In these $n-1$ variables, $\ker\mathcal{B}[1]=\{0\}$. Consider some singular polynomial $f$. Then $\alpha_{in}f^{(0)}=(\partial_{x_i}-\partial_{x_n})f^{(0)}=\partial_{x_i}f^{(0)}=0$ implies that it cannot contain any term of the form $x_ix_j$. Let the remaining terms be $f^{(0)}=x_1^2+x_2^2+\dots +x_C^2$ for some $C<n$. Obviously $\alpha_{ij}f^{(0)}=0$, but $\beta_{1n}f^{(0)}=Cx_n+(1-C)x_1$ which is $x_1$ or $x_1+x_2+\dots +x_{n-1}$. Since $\alpha_{1n}f^{(1)}+\beta_{1n}f^{(0)}=0$, this implies that $\alpha_{1n}f^{(1)}=x_1$ or $x_1+\dots +x_{n-1}$, which is impossible.
\end{proof}
\begin{corollary}
\label{c3.3.1}
We have $Q_2=n-1$.
\end{corollary}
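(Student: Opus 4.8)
The plan is to read off $Q_2$ directly from the requirement that the two available expressions for $\dim\mathcal{L}_{1,c}[2]$ must agree. By Proposition \ref{p1.6} the Hilbert series factors as $h_{\mathcal{L}_{1,c}}(z)=(1+z)^{n-1}Q(z^2)$ with $Q(z^2)=Q_0+Q_2z^2+Q_4z^4+\dotsm$, so the generating-function side pins down each $\dim\mathcal{L}_{1,c}[d]$ as a convolution of the binomial coefficients of $(1+z)^{n-1}$ against the coefficients $Q_{2i}$. The corollary is then obtained by comparing this to the dimension already computed in Proposition \ref{p3.3}.

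Concretely, I would extract the coefficient of $z^2$. Writing $(1+z)^{n-1}=\sum_{i=0}^{n-1}\binom{n-1}{i}z^i$, the only contributions to $z^2$ come from $\binom{n-1}{2}z^2\cdot Q_0$ and $\binom{n-1}{0}\cdot Q_2z^2$, so that $\dim\mathcal{L}_{1,c}[2]=\binom{n-1}{2}Q_0+Q_2$. Now I would substitute the two facts already in hand: $Q_0=1$ by Corollary \ref{c3.1.1}, and $\dim\mathcal{L}_{1,c}[2]=\binom{n}{2}$ by Proposition \ref{p3.3}. Solving for $Q_2$ yields $Q_2=\binom{n}{2}-\binom{n-1}{2}$, and the elementary identity $\binom{n}{2}-\binom{n-1}{2}=n-1$ gives the claim.

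There is no genuine obstacle here: all of the substantive work has already been carried out, both in the structural factorization of the Hilbert series imported from \cite{balagovic2013representations} and, more importantly, in the explicit dimension count of Proposition \ref{p3.3}. This corollary is purely the bookkeeping step that converts that dimension into the next unknown coefficient of the polynomial $Q$, and I expect it to reduce to a one-line computation once those two inputs are invoked.
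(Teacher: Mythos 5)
Your proposal is correct and matches the paper's (implicit) argument exactly: the paper states this corollary without proof precisely because it is the coefficient-extraction step described at the start of Section 3.1, namely equating $[z^2]\,(1+z)^{n-1}Q(z^2)=\binom{n-1}{2}Q_0+Q_2$ with $\dim\mathcal{L}_{1,c}[2]=\binom{n}{2}$ from Proposition \ref{p3.3} and using $Q_0=1$. Your computation $Q_2=\binom{n}{2}-\binom{n-1}{2}=n-1$ is exactly the intended one-line bookkeeping.
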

\begin{prop}
\label{p3.4}
The dimension of $\mathcal{L}_{1,c}[3]$ is $\binom{n+1}{3}$.
\end{prop}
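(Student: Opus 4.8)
The plan is to reduce the entire statement to the vanishing of $\ker\mathcal{B}$ in degree $3$. Since $\mathcal{M}_{1,c}\cong S\mathfrak{h}^*$ and $\dim\mathfrak{h}^*=n-1$, the degree-$3$ component has dimension $\binom{(n-1)+2}{3}=\binom{n+1}{3}$, so it suffices to show that no nonzero homogeneous polynomial of degree $3$ lies in $\ker\mathcal{B}$. The crucial reduction is that every element of $\ker\mathcal{B}[3]$ is in fact \emph{singular}: because $\ker\mathcal{B}$ is a submodule, each Dunkl operator $D_{y_i-y_j}$ maps $\ker\mathcal{B}[3]$ into $\ker\mathcal{B}[2]$, and by Proposition \ref{p3.3} we have $\ker\mathcal{B}[2]=0$. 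Hence any $f\in\ker\mathcal{B}[3]$ satisfies $D_{y_i-y_j}f=0$ for all $i,j$, and I only need to rule out degree-$3$ singular polynomials.

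Next I would run the $c$-adic analysis already used in Propositions \ref{p3.2} and \ref{p3.3}. Normalizing so that $f=\sum_{k\ge0}c^kf^{(k)}$ with $f^{(0)}\ne0$, and writing $D_{y_i-y_j}=\alpha_{ij}+c\beta_{ij}$, the vanishing of $D_{y_i-y_j}f$ forces its lowest-order ($c^0$) part to vanish, i.e. $\alpha_{ij}f^{(0)}=0$ for every pair $i,j$. Taking $j=n$ and working with the reduced representative $f^{(0)}\in\mathbb{F}_2[x_1,\dots,x_{n-1}]$ coming from the substitution $x_n=x_1+\dots+x_{n-1}$, the operator $\alpha_{in}=\partial_{x_i}-\partial_{x_n}$ acts simply as $\partial_{x_i}$, so we obtain $\partial_{x_i}f^{(0)}=0$ for all $i=1,\dots,n-1$.

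Finally I would invoke the characteristic-$2$ parity obstruction. A monomial $x_1^{a_1}\cdots x_{n-1}^{a_{n-1}}$ satisfies $\partial_{x_i}(\text{monomial})=0$ for every $i$ precisely when all exponents $a_i$ are even, and in that case the total degree $\sum_i a_i$ is even. Since $f^{(0)}$ is homogeneous of odd degree $3$, no such monomial can occur, forcing $f^{(0)}=0$ and contradicting $f^{(0)}\ne0$. Thus $\ker\mathcal{B}[3]=0$ and $\dim\mathcal{L}_{1,c}[3]=\binom{n+1}{3}$. The point worth emphasizing—and where this argument is genuinely simpler than the degree-$2$ case of Proposition \ref{p3.3}, which had to pass to the $c^1$-order relation $\alpha_{1n}f^{(1)}+\beta_{1n}f^{(0)}=0$—is exactly the parity: because $3$ is odd, the leading $c^0$ equation already annihilates $f^{(0)}$, so no higher-order ($\beta_{ij}$) bookkeeping is required. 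The only step deserving care is checking that the $c^0$ part of $D_{y_i-y_j}f$ really is $\alpha_{ij}f^{(0)}$ after re-reducing modulo $x_1+\dots+x_n$, which holds because $\alpha_{ij}$ descends to the quotient and introduces no powers of $c$.
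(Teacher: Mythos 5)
Your argument is correct, but it takes a genuinely different route from the paper. The paper's own proof of Proposition \ref{p3.4} is a one-line appeal to the structural result of Proposition \ref{p1.6} (Balagovic--Chen): since $h_{\mathcal{L}_{1,c}}(z)=(1+z)^{n-1}Q\left(z^2\right)$ with $Q_0=1$ (Corollary \ref{c3.1.1}) and $Q_2=n-1$ (Corollary \ref{c3.3.1}), the coefficient of $z^3$ is already forced: it equals $\binom{n-1}{3}+(n-1)^2=\binom{n+1}{3}$, with no Dunkl-operator computation at all; the paper then records the absence of degree-$3$ singular polynomials as a \emph{consequence}, in the remark following the proposition. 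You argue in the opposite direction: you prove $\ker\mathcal{B}[3]=0$ directly --- first the reduction that $\ker\mathcal{B}[2]=0$ (which does follow from Proposition \ref{p3.3}, as $\binom{n}{2}=\dim\mathcal{M}_{1,c}[2]$ after the substitution) forces every element of $\ker\mathcal{B}[3]$ to be singular, then the $c^0$-order equations $\partial_{x_i}f^{(0)}=0$, then the characteristic-$2$ parity obstruction that a nonzero polynomial of odd degree cannot have all exponents even --- and deduce the dimension from $\dim\mathcal{M}_{1,c}[3]=\binom{n+1}{3}$. Each step checks out, including the observation that the odd degree makes this \emph{easier} than Proposition \ref{p3.3}, since $f^{(0)}$ dies at order $c^0$ and the $\beta_{ij}$, $f^{(1)}$ bookkeeping never enters. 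What your approach buys is independence from the imported result of \cite{balagovic2013representations} and a self-contained argument parallel to the degree-$2$ case; what the paper's approach buys is brevity and a template it reuses for all odd degrees (e.g.\ Proposition \ref{p3.8}), namely that the form $(1+z)^{n-1}Q\left(z^2\right)$ determines every odd-degree coefficient from previously computed even-degree data.
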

\begin{proof}
Expand and look at the coefficient of $z^3$ in $h_{\mathcal{L}_{1,c}}(z)$ via the form from \cite{balagovic2013representations}.
\end{proof}
\begin{remark}
This implies that under the substitution $x_n=x_1+x_2+\dots +x_{n-1}$, there are no singular polynomials in the space of homogeneous degree $3$ polynomials in $\{x_1,x_2,\dots ,x_{n-1}\}$.
\end{remark}
\begin{prop}
\label{p3.5}
The polynomials $R_{ij}=\frac{c+1}{c}\left(x_i^4+x_i^2x_j^2+x_j^4\right)+\left(x_i^3+x_i^2x_j+x_ix_j^2+x_j^3\right)\left(\sum_{k\ne i,j,n}x_k\right)+\left(x_i^2+x_j^2\right)\left(\sum_{l\ne i,j,n}x_l^2\right)+(x_i+x_j)\left(\sum_{a,b\ne i,j,n; a\ne b}x_a^2x_b\right)$ are singular for all $i,j\in[n-1]$.
\end{prop}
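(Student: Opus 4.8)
The plan is to verify the singularity condition $D_{y_a-y_b}R_{ij}=0$ for every pair $a,b$. Since $y_1-y_2,\dots,y_1-y_n$ span $\mathfrak h$ and $y\mapsto D_y$ is an algebra homomorphism (in particular linear on $\mathfrak h$), it suffices to check $D_{y_1-y_r}R_{ij}=0$ for $r=2,\dots,n$. By the $S_n$-equivariance of the whole family $\{R_{ij}\}$ I may take $i=1$, $j=2$ and write $R=R_{12}$; moreover, using $x_1^3+x_1^2x_2+x_1x_2^2+x_2^3=(x_1+x_2)(x_1^2+x_2^2)$ in characteristic $2$, one checks that $R$ is invariant under $\sigma_{12}$ and under all permutations of $\{3,\dots,n-1\}$, so the only genuinely distinct checks are $r=2$, a single $r\in\{3,\dots,n-1\}$, and $r=n$.

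Since $c$ is a unit I clear the denominator and study $\tilde R=cR$ instead, which is singular if and only if $R$ is. Writing out the product gives $\tilde R=(c+1)P+c(A+B+C)=P+cG$, where $P=x_1^4+x_1^2x_2^2+x_2^4$, the polynomials $A,B,C$ are respectively the second, third, and fourth summands of $R_{12}$ (those not carrying the factor $\tfrac{c+1}{c}$), and $G=P+A+B+C$. Now $D_{y_1-y_r}=\alpha_{1r}+c\beta_{1r}$, and since $P,G$ have $c$-free coefficients while $\alpha_{1r},\beta_{1r}$ involve no $c$, expanding yields
\[
D_{y_1-y_r}\tilde R=\alpha_{1r}P+c\bigl(\alpha_{1r}G+\beta_{1r}P\bigr)+c^2\,\beta_{1r}G .
\]
Because $c$ is transcendental over $\mathbb{F}_2$, this vanishes exactly when its three $c$-homogeneous components do, i.e. when $\alpha_{1r}P=0$, $\alpha_{1r}G+\beta_{1r}P=0$, and $\beta_{1r}G=0$.

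The first component is free: every exponent in $P$ is even, so $\partial_{x_m}P=0$ for all $m$ and thus $\alpha_{1r}P=0$. This reflects the Frobenius identity $P=(x_1^2+x_1x_2+x_2^2)^2$; the quadratic $x_1^2+x_1x_2+x_2^2$ is annihilated by every $\beta_{ij}$ by Proposition~\ref{p2.1}, but its square is not, and the terms $A,B,C$ are engineered to absorb exactly that discrepancy. Indeed, using $\alpha_{1r}P=0$ together with $\beta_{1r}G=\beta_{1r}P+\beta_{1r}(A+B+C)$, the last two components collapse to the twin identities
\[
\alpha_{1r}(A+B+C)=\beta_{1r}P,\qquad \beta_{1r}(A+B+C)=\beta_{1r}P ,
\]
so the whole proposition amounts to showing that both the derivation $\alpha_{1r}$ and the divided-difference operator $\beta_{1r}$ send $A+B+C$ to the single degree-$3$ polynomial $\beta_{1r}P$.

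I would close by direct computation of the three pieces $\beta_{1r}P$, $\alpha_{1r}(A+B+C)$ and $\beta_{1r}(A+B+C)$ in the cases above, expanding each divided difference through $\frac{x_1^m-x_k^m}{x_1-x_k}=\sum_{a+b=m-1}x_1^ax_k^b$ and reducing the output modulo $x_1+\dots+x_n$. The main obstacle is purely combinatorial: tracking how $\beta_{1r}$ scatters the large sums in $A$, $B$, and especially the triple sum $C=(x_1+x_2)\sum_{a\ne b}x_a^2x_b$, keeping straight the exclusion of the index $n$ and its reappearance through the relation $x_1+\dots+x_n=0$, and then confirming that all the resulting monomials recombine precisely into $\beta_{1r}P$. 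The $r=2$ check is the lightest, since only the block $\{1,2\}$ meets the quartic $P$; for $r\ge 3$ it is the $x_r$-containing terms of $A,B,C$ that must supply the matching contributions, and verifying these cancellations is where essentially all the labor sits.
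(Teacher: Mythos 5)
Your reduction is sound, and it is in fact the same strategy the paper itself uses: clear the denominator, grade $cR_{12}=P+cG$ by powers of $c$, note that transcendence of $c$ over $\mathbb{F}_2$ forces each $c$-homogeneous component of $D_{y_1-y_r}(cR_{12})$ to vanish separately, and observe that $\alpha_{1r}P=0$ because every exponent in $P$ is even. Your twin identities $\alpha_{1r}(A+B+C)=\beta_{1r}P$ and $\beta_{1r}(A+B+C)=\beta_{1r}P$ are a correct restatement of the two remaining components. The genuine gap is that you never verify them: everything that makes the proposition true --- the choice of the coefficient $\frac{c+1}{c}$ and the precise lower-order terms $A,B,C$ --- lives inside exactly the computation you defer (``I would close by direct computation \dots where essentially all the labor sits''). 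As written, the argument establishes only that the proposition is \emph{equivalent} to these identities, not that they hold; since the whole point of the proposition is that these particular polynomials were engineered to make the cancellations work, nothing short of carrying out the computation (or replacing it by some structural argument) completes the proof.

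For comparison, the paper does carry it out, and organizes the labor so that it stays manageable. First, it uses the relation $x_1+\dots+x_n=0$ to collapse $G=P+A+B+C$ into the compact form $x_1^2x_2^2+(x_1+x_2)\sum_{k\ne 1,2}x_k^3$ (the Remark following the Proposition), so the scattering of $\beta_{1r}$ over the triple sum $C$ that you identify as the main obstacle never arises. Second, instead of working with differences, it computes the full operators $D_{y_j}$ (only $j=1$ and $j=3$ are needed, by the symmetries you noted) and finds that each sends $cR_{12}$ to the \emph{same nonzero} polynomial $c\left(x_1^3+x_1^2x_2+x_1x_2^2+x_2^3\right)$; singularity then follows at once because $D_{y_i-y_j}=D_{y_i}-D_{y_j}$. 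Adopting these two devices would turn your plan into a short, complete proof.
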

\begin{remark}
Note that $R_{ij}$ can be rewritten as $$R_{ij}=\frac{1}{c}\left(x_i^4+x_i^2x_j^2+x_j^4\right)+x_i^2x_j^2+(x_i+x_j)\sum_{k\ne i,j}x_k^3.$$
\end{remark}
\begin{proof}
We will prove that $f=cR_{12}$ is singular (the rest are the same by symmetry). Let $f^{(0)}=x_1^4+x_1^2x_2^2+x_2^4$ and $f^{(1)}=x_1^2x_2^2+(x_1+x_2)\sum_{k>2}x_k^3=R_{ij}-\frac{1}{c}f^{(0)}$.

We compute the action of $D_{y_j}=\partial_{x_j}-cD_j$ where $D_j=\sum_{k\ne j}\frac{1-\sigma_{1j}}{x_j-x_k}$. We have that $$D_{y_j}cR_{12}=\left(\partial_{x_j}f^{(0)}\right)+c\left(D_j f^{(0)}+\partial_{x_j}f^{(1)}\right)+c^2\left(D_j f^{(1)}\right),$$ so we are interested in the action upon each degree (when it is viewed as a polynomial in $c$ with coefficients in the ring $\Bbbk[x_1,x_2,\dots ,x_n]$). Computing the action of $D_{y_1}$, we have
\begin{align*}
\partial_{x_1}f^{(0)} &=0,\\
\sum_{k\ne 1}\frac{1-\sigma_{1k}}{x_1-x_k}f^{(0)} &=x_1^2x_2+x_1x_2^2+\sum_{k=1}^{n}x_k^3,\\
\partial_{x_1}f^{(1)} &=\sum_{k>2}x_k^3,\\
\sum_{k\ne 1}\frac{1-\sigma_{1k}}{x_1-x_k}f^{(1)} &=\sum_{k>2}\left[x_2^2(x_1+x_k)+x_1^3+x_1^2x_k+x_1x_k^2+x_k^3+x_2^3+x_2\left(x_1^2+x_1x_k+x_k^2\right)+\sum_{j\in [n]}x_j^3\right].\\
\end{align*}
From this, we note that (still viewing $D_{y_1}cR_{12}$ as a polynomial in $c$ with coefficients in $\Bbbk[x_1,\dots ,x_n]$) the constant term is $0$. The coefficient of $c$ is then $x_1^3+x_1^2x_2+x_1x_2^2+x_2^3$. As for the coefficient of $c^2$, we sum and using the fact that $x_n=x_1+\dots +x_{n-1}$, we ultimately obtain $0$. Hence $D_{y_1}R_{12}=x_1^3+x_1^2x_2+x_1x_2^2+x_2^3$.

Note that the action of $D_{y_2}$ is identical to the action of $D_{y_1}$ by symmetry, so it suffices to compute the action of $D_{y_3}$ (since all other $j>2$ are analogous). Then
\begin{align*}
\partial_{x_3}f^{(0)} &=0,\\
\sum_{j\ne 3}\frac{1-\sigma_{3j}}{x_3-x_j}f^{(0)} &=x_1^3+x_1^2x_2+x_1x_2^2+x_2^3+(x_1+x_2)x_3^2,\\
\partial_{x_3}f^{(1)} &=(x_1+x_2)x_3^2,\\
\sum_{j\ne 3}\frac{1-\sigma_{3j}}{x_3-x_j}f^{(1)} &=\sum_{j\ne 3}\frac{1-\sigma_{3j}}{x_3-x_j}\left(x_1^4+x_1^3x_2+x_1^2x_2^2+x_1x_2^3+x_2^4+(x_1+x_2)\sum_{j\in [n]}x_j^3\right),\\
&=\frac{1-\sigma_{13}}{x_3-x_1}\left(x_1^4+x_1^3x_2+x_1^2x_2^2+x_1x_2^3+x_2^4\right)+\frac{1-\sigma_{23}}{x_3-x_2}\left(x_1^4+x_1^3x_2+x_1^2x_2^2+x_1x_2^3+x_2^4\right)=0.\\
\end{align*}
Via the discussion above, we again see that the constant term in $D_{y_3}cR_{12}$ is $0$. The coefficient of $c$ is $x_1^3+x_1^2x_2+x_1x_2^2+x_2^3$, and the coefficient of $c^2$ is $0$. Thus $D_{y_k}=x_1^3+x_1^2x_2+x_1x_2^2+x_2^3$ for all $k\in [n]$, and hence all Dunkl operators $D_{y_i-y_j}$ send all $R_{kl}$ to $0$.
\end{proof}

\begin{prop}
\label{p3.6}
The singular polynomials $R_{ij}$ in degree $4$ are linearly independent for $i,j<n$.
\end{prop}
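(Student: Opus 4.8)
The plan is to prove that the only $\Bbbk$-linear relation $\sum_{i<j}\lambda_{ij}R_{ij}=0$ among the $\binom{n-1}{2}$ polynomials $R_{ij}$ (indexed by unordered pairs $i<j<n$) is the trivial one. Each $R_{ij}$ has coefficients in the subfield $\mathbb{F}_2(c)\subseteq\Bbbk$, and linear independence of a finite family of vectors whose coordinates lie in a subfield is unaffected by enlarging the field (it is detected by the non-vanishing of a minor, which may be computed inside $\mathbb{F}_2(c)$). So the first move is to reduce the claim to independence over $\mathbb{F}_2(c)$.

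The core idea is then to isolate the most singular part of $R_{ij}$ as $c\to 0$. Using the simplified expression from the Remark after Proposition~\ref{p3.5}, I would write
\[
R_{ij}=\tfrac{1}{c}\,P_{ij}+U_{ij},\qquad P_{ij}:=x_i^4+x_i^2x_j^2+x_j^4,
\]
where $U_{ij}=x_i^2x_j^2+(x_i+x_j)\sum_{k\ne i,j}x_k^3$ carries no $c$. Given a relation over $\mathbb{F}_2(c)$, clear denominators so that all $\lambda_{ij}\in\mathbb{F}_2[c]$ and divide through by the largest common power of $c$, so that some $\lambda_{ij}(0)\ne 0$. Multiplying by $c$ gives $\sum_{i<j}\lambda_{ij}\bigl(P_{ij}+c\,U_{ij}\bigr)=0$ in the free $\mathbb{F}_2[c]$-module $\mathbb{F}_2[c]\otimes S\mathfrak{h}^*$, and reducing modulo $c$ yields $\sum_{i<j}\lambda_{ij}(0)\,P_{ij}=0$, a nontrivial $\mathbb{F}_2$-linear relation among the residues $P_{ij}$. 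Thus the whole statement reduces to showing the $P_{ij}$ are linearly independent over $\mathbb{F}_2$.

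This last step is where the substitution $x_n=x_1+\dots+x_{n-1}$ must be handled, and it is the crucial simplification: because $i,j<n$, the residue $P_{ij}$ involves no $x_n$, so it is literally unchanged by the substitution and lives in $\mathbb{F}_2[x_1,\dots,x_{n-1}]$. (This is exactly what fails for the $c^0$-part $U_{ij}$, where expanding $x_n^3$ produces spurious $x_i^2x_j^2$ cross-terms; restricting to the $c^{-1}$-residue sidesteps that contamination.) Among the monomials of $P_{ab}=x_a^4+x_a^2x_b^2+x_b^4$, the only one involving two distinct variables is $x_a^2x_b^2$, so the coefficient of the fixed monomial $x_i^2x_j^2$ in $P_{ab}$ is $1$ if $\{a,b\}=\{i,j\}$ and $0$ otherwise. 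Reading off that coefficient in $\sum\lambda_{ab}(0)P_{ab}=0$ forces $\lambda_{ij}(0)=0$ for every pair, contradicting the normalization, so no nontrivial relation exists.

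I expect the only delicate point to be the bookkeeping around the transcendental parameter: making the descent to $\mathbb{F}_2(c)$ precise and correctly extracting the $c^{-1}$-residue. Once the residue $P_{ij}$ is seen to be substitution-invariant, the independence of the $P_{ij}$ is immediate from the signature monomial $x_i^2x_j^2$, and the rest is formal.
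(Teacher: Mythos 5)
Your proof is correct and is essentially the paper's own argument: both isolate the lowest grading in powers of $c$ (the $c^{-1}$ part $x_i^4+x_i^2x_j^2+x_j^4$) of a putative linear relation and observe that the monomial $x_i^2x_j^2$ appears in exactly one such term, forcing all coefficients to vanish. Your write-up merely makes explicit the scalar-field bookkeeping over $\mathbb{F}_2[c]$ and the observation that the $c^{-1}$ residue is unaffected by the substitution $x_n=x_1+\dots+x_{n-1}$, details the paper leaves implicit.
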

\begin{proof}
Consider some linear combination of the $R_{ij}$ and consider the lowest gradation by powers of $c$. This is comprised of the $x_i^4+x_i^2x_j^2+x_j^4$ terms. But the $x_i^2x_j^2$ part is unique to each $R_{ij}$, hence they are linearly independent.
\end{proof}
\begin{prop}
\label{p3.7}
The dimension of $\mathcal{L}_{1,c}[4]$ is $\binom{n+2}{4}-\binom{n-1}{2}$.
\end{prop}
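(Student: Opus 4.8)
The plan is to show that $\ker\mathcal B[4]$ is exactly $\binom{n-1}{2}$-dimensional; then $\dim\mathcal L_{1,c}[4]=\dim\mathcal M_{1,c}[4]-\dim\ker\mathcal B[4]=\binom{n+2}{4}-\binom{n-1}{2}$, using $\dim\mathcal M_{1,c}[4]=\binom{n+2}{4}$ (the number of degree-$4$ monomials in the $n-1$ variables $x_1,\dots,x_{n-1}$). First I would record that $\ker\mathcal B[d]=0$ for $d\le 3$: Propositions~\ref{p3.1}--\ref{p3.4} give $\dim\mathcal L_{1,c}[d]=\dim\mathcal M_{1,c}[d]$ for $d=0,1,2,3$. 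Consequently, for $f\in\mathcal M_{1,c}[4]$ one has $f\in\ker\mathcal B[4]$ if and only if $D_{y}f\in\ker\mathcal B[3]=0$ for every $y\in\mathfrak h$, i.e. $\ker\mathcal B[4]$ is precisely the space of degree-$4$ singular polynomials. The lower bound $\dim\ker\mathcal B[4]\ge\binom{n-1}{2}$ is then exactly Propositions~\ref{p3.5} and \ref{p3.6}: the $R_{ij}$ with $i<j\le n-1$ are singular and linearly independent.

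For the upper bound I would analyze the leading term in the $c$-grading. Writing a singular $f=\sum_{k\ge 0}c^kf^{(k)}$ with $f^{(0)}\ne 0$ and $D_{y_i-y_j}=\alpha_{ij}+c\beta_{ij}$, the coefficient of $c^0$ in $D_{y_i-y_j}f=0$ reads $\alpha_{ij}f^{(0)}=0$ for all $i,j$. In characteristic $2$ the only degree-$4$ monomials annihilated by every $\partial_{x_i}$ are $x_i^4$ and $x_i^2x_j^2$, so $f^{(0)}$ lies in $V_0:=\mathrm{span}\{x_i^4,\ x_i^2x_j^2:\ i<j\le n-1\}$, of dimension $\binom{n}{2}$. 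The coefficient of $c^1$ gives $\alpha_{ij}f^{(1)}=\beta_{ij}f^{(0)}$, and the requirement that a single $f^{(1)}$ solve these equations simultaneously is a solvability condition on $f^{(0)}$. Computing $\beta_{ij}f^{(0)}$ explicitly (the same kind of calculation already carried out for degree $2$ in Proposition~\ref{p3.3} and for the $R_{ij}$ in Proposition~\ref{p3.5}) and imposing solvability should produce exactly the $n-1$ relations $a_i=\sum_{j\ne i}b_{ij}$ on the coefficients of $f^{(0)}=\sum_i a_ix_i^4+\sum_{i<j}b_{ij}x_i^2x_j^2$. These relations cut $V_0$ down to $W_0:=\mathrm{span}\{x_i^4+x_i^2x_j^2+x_j^4:\ i<j\le n-1\}$, the span of the leading terms of the $cR_{ij}$, of dimension $\binom{n-1}{2}$.

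To turn this into a dimension count I would pass to $K=\mathbb F_2(c)$ and the saturated $\mathbb F_2[c]$-lattice $L=\ker\mathcal B[4]\cap\mathbb F_2[c][x_1,\dots,x_{n-1}]$; since dividing a kernel element by $c$ stays in the kernel, $L$ is saturated, so $L/cL$ injects into $\mathcal M_{1,c}[4]$ with image the $\mathbb F_2$-span $U$ of all leading terms $f^{(0)}$, and $\dim_{\mathbb F_2}U=\dim_{K}\ker\mathcal B[4]$. The previous paragraph gives $U\subseteq W_0$, while the $R_{ij}$ give $W_0\subseteq U$; hence $U=W_0$ and $\dim_K\ker\mathcal B[4]=\binom{n-1}{2}$, which equals the dimension over $\Bbbk$ since $c$ is transcendental over $\mathbb F_2$. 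This yields $\dim\mathcal L_{1,c}[4]=\binom{n+2}{4}-\binom{n-1}{2}$. The main obstacle is the middle step: carrying out the $\beta_{ij}f^{(0)}$ computation carefully enough to see that the $c^1$-solvability obstruction vanishes precisely in the $\binom{n-1}{2}$ directions spanned by the $R_{ij}$ (equivalently, that a nonzero pure fourth-power leading term $\sum_i a_ix_i^4$ can never be completed to a singular polynomial, exactly as $\sum_i x_i^2$ failed in degree $2$), and that it imposes no further relations.
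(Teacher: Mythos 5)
Your plan follows the same route as the paper's own proof: identify $\ker\mathcal{B}[4]$ with the space of degree-$4$ singular polynomials via Propositions \ref{p3.1}--\ref{p3.4}, take the lower bound from Propositions \ref{p3.5} and \ref{p3.6}, and obtain the upper bound by constraining the lowest $c$-coefficient $f^{(0)}$, which must lie in $V_0=\mathrm{span}\{x_i^4,\,x_i^2x_j^2\}$. Your bookkeeping is in places more careful than the paper's: the saturated-lattice argument identifying $\dim\ker\mathcal{B}[4]$ with the $\mathbb{F}_2$-dimension of the space $U$ of leading terms, and the decomposition $V_0=W_0\oplus\mathrm{span}\{x_i^4\}$, which shows that the only statement left to prove is $U\cap\mathrm{span}\{x_i^4\}=0$. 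In particular, your worry about the solvability condition imposing ``no further relations'' is automatic: $W_0\subseteq U$ already holds because the $R_{ij}$ are genuinely singular, so $U$ cannot be smaller than $W_0$.

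However, the one statement you do not prove, $U\cap\mathrm{span}\{x_i^4\}=0$, is precisely where all of the work in the paper's proof is concentrated, so as written the proposal has a genuine gap at its decisive step. The paper closes it as follows: after subtracting multiples of the $cR_{ij}$, one may assume $f^{(0)}=x_1^4+\cdots+x_C^4$ with $C\ge 1$ (after permuting indices); the coefficient of $c^1$ in $D_{y_1-y_n}f=0$ reads $\partial_{x_1}f^{(1)}+\beta_{1n}f^{(0)}=0$ (under the substitution $x_n=x_1+\cdots+x_{n-1}$ the operator $\alpha_{1n}$ acts as $\partial_{x_1}$), and expanding $\beta_{1n}f^{(0)}$ shows that the monomial $x_1^3$ occurs with coefficient $1$ regardless of the parity of $C$: it arises exactly five times, once each from $x_1^3+\cdots+x_{n-1}^3$, from $x_1^2(x_1+\cdots+x_C)$, from $x_1(x_1+\cdots+x_C)^2$, and from the two cross terms $(x_1+\cdots+x_{n-1})^2(x_1+\cdots+x_C)$ and $(x_1+\cdots+x_{n-1})(x_1+\cdots+x_C)^2$. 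Since in characteristic $2$ the image of $\partial_{x_1}$ contains no $x_1^3$ term (such a term could only come from $x_1^4$, which is killed by the factor $4$), the equation is unsolvable. This is exactly the degree-$4$ incarnation of your degree-$2$ remark, but it has to be verified by this computation; until you carry it out, you have a correct plan rather than a proof.
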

\begin{proof}
This is equivalent to the claim that no more singular polynomials exist in $\mathcal{M}_{1,c}[4]$. After the substitution for $x_n$, consider such a polynomial $f$. Then $\alpha_{ij}f^{(0)}=0$, so no terms of the form $x_i^3x_j$ exist in $f^{(0)}$. Now subtract copies of $cR_{ij}$ to remove the terms of the form $x_i^2x_j^2$, so that without loss of generality $f^{(0)}=x_1^4+x_2^4+\dots +x_C^4$. Then $\alpha_{1n}f^{(1)}+\beta_{1n}f^{(0)}=0$, or $$0=\partial_{x_1}f^{(1)}+(1+C)\left(x_2^3+\dots +x_{n-1}^3+\sum_{i,j<n}x_i^2x_j\right)+\left(x_1^3+\dots +x_{n-1}^3\right)$$$$+x_1^2(x_1+\dots +x_C)+x_1(x_1+\dots +x_C)^2+(x_1+\dots +x_{n-1})^2(x_1+\dots +x_C)+(x_1+\dots +x_{n-1})(x_1+\dots +x_C)^2.$$ However, regardless of the parity of $C$, there must remain an $x_1^3$ term, which is impossible to produce in $\partial_{x_1}f^{(1)}$. Since by assumption $f^{(0)}\ne 0$, we have the result.
\end{proof}
\begin{corollary}
\label{c3.7.1}
We have $Q_4=n-1$.
\end{corollary}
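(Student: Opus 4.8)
The plan is to read off $Q_4$ purely formally from the shape of the Hilbert series, using the dimension count already established in Proposition \ref{p3.7}. By Proposition \ref{p1.6} we may write $h_{\mathcal{L}_{1,c}}(z) = (1+z)^{n-1} Q(z^2)$ with $Q(z^2) = Q_0 + Q_2 z^2 + Q_4 z^4 + \dotsb$, and by Corollaries \ref{c3.1.1} and \ref{c3.3.1} we already know $Q_0 = 1$ and $Q_2 = n-1$. The only unknown appearing in the coefficient of $z^4$ is then $Q_4$, so it suffices to extract $[z^4] h_{\mathcal{L}_{1,c}}(z)$ in two independent ways and equate them.

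First I would expand the product. Since $(1+z)^{n-1} = \sum_k \binom{n-1}{k} z^k$, the coefficient of $z^4$ in $(1+z)^{n-1} Q(z^2)$ is the sum over $2j+k=4$ of $Q_{2j}\binom{n-1}{k}$, namely
$$[z^4] h_{\mathcal{L}_{1,c}}(z) = Q_0 \binom{n-1}{4} + Q_2 \binom{n-1}{2} + Q_4 = \binom{n-1}{4} + (n-1)\binom{n-1}{2} + Q_4.$$
On the other hand, Proposition \ref{p3.7} supplies $[z^4] h_{\mathcal{L}_{1,c}}(z) = \dim \mathcal{L}_{1,c}[4] = \binom{n+2}{4} - \binom{n-1}{2}$. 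Equating the two expressions isolates
$$Q_4 = \binom{n+2}{4} - \binom{n-1}{2} - \binom{n-1}{4} - (n-1)\binom{n-1}{2}.$$

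The remaining work is a routine binomial simplification. Grouping $\binom{n+2}{4} - \binom{n-1}{4} = \tfrac{1}{2}(n-1)(n^2 - 2n + 2)$ and $\binom{n-1}{2} + (n-1)\binom{n-1}{2} = n\binom{n-1}{2} = \tfrac{1}{2}n(n-1)(n-2)$, the difference collapses to $\tfrac{1}{2}(n-1)\bigl[(n^2 - 2n + 2) - n(n-2)\bigr] = \tfrac{1}{2}(n-1)\cdot 2 = n-1$, yielding $Q_4 = n-1$ as claimed. I do not expect any genuine obstacle here: the entire content of the corollary lives in the dimension formula of Proposition \ref{p3.7}, and this step merely performs the bookkeeping that converts that dimension into the corresponding coefficient of the auxiliary polynomial $Q$. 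Should I wish to bypass the explicit polynomial arithmetic, I could instead verify the identity by evaluating both sides at five values of $n$, since each side is a polynomial of degree at most $4$ in $n$.
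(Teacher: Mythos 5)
Your proof is correct and is exactly the argument the paper leaves implicit: extract $[z^4]$ from $h_{\mathcal{L}_{1,c}}(z)=(1+z)^{n-1}Q(z^2)$ using $Q_0=1$, $Q_2=n-1$, equate it with $\dim\mathcal{L}_{1,c}[4]=\binom{n+2}{4}-\binom{n-1}{2}$ from Proposition \ref{p3.7}, and simplify. The binomial bookkeeping checks out, so there is nothing to add.
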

\begin{prop}
\label{p3.8}
The dimension of $\mathcal{L}_{1,c}[5]$ is $\binom{n+3}{5}-(n-1)\binom{n-1}{2}$.
\end{prop}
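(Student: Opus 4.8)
The plan is to deduce this dimension purely from the constrained shape of the Hilbert series together with the low-order coefficients of $Q$ that have already been pinned down, exactly as was done for the odd degree $3$ in Proposition \ref{p3.4}. By Proposition \ref{p1.6} we may write
$$h_{\mathcal{L}_{1,c}}(z)=(1+z)^{n-1}Q(z^2),\qquad Q(z^2)=Q_0+Q_2z^2+Q_4z^4+Q_6z^6+\cdots,$$
and by Corollaries \ref{c3.1.1}, \ref{c3.3.1}, and \ref{c3.7.1} we already know $Q_0=1$, $Q_2=n-1$, and $Q_4=n-1$. The key observation is that $Q(z^2)$ contributes only even powers of $z$, so to form $z^5$ we must pair an even power $z^b$ from $Q(z^2)$ with the power $z^{5-b}$ from $(1+z)^{n-1}$; since $b$ is even with $0\le b\le 5$, only $b\in\{0,2,4\}$ occur. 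In particular $Q_6$ and every higher coefficient are irrelevant, so $\dim\mathcal{L}_{1,c}[5]$ is completely determined by the three known values.

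Concretely I would extract the coefficient. Using $[z^{5-b}](1+z)^{n-1}=\binom{n-1}{5-b}$ for $b=0,2,4$, this gives
$$\dim\mathcal{L}_{1,c}[5]=[z^5]h_{\mathcal{L}_{1,c}}(z)=\binom{n-1}{5}+(n-1)\binom{n-1}{3}+(n-1)\binom{n-1}{1}.$$
It then remains only to verify the binomial identity
$$\binom{n-1}{5}+(n-1)\binom{n-1}{3}+(n-1)^2=\binom{n+3}{5}-(n-1)\binom{n-1}{2},$$
which is precisely the asserted value. I would check this either by expanding both sides as polynomials in $n$ and comparing, or by repeatedly applying Pascal's rule to rewrite $\binom{n+3}{5}$ in the basis $\binom{n-1}{\ast}$; the latter produces exactly the correction terms on the left once $(n-1)\binom{n-1}{2}$ is moved across.

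The main (and essentially only) obstacle is bookkeeping: confirming that no coefficient beyond $Q_4$ can enter the $z^5$ term, and then carrying out the binomial algebra correctly. Unlike the even degrees — where one must exhibit the singular polynomials $R_{ij}$ and then rule out further ones (Propositions \ref{p3.5}--\ref{p3.7}) — the odd degree $5$ carries no new representation-theoretic content once $Q_4$ is known. As an independent sanity check on the answer I would confirm the bound $\dim\mathcal{L}_{1,c}[5]\le\binom{n+3}{5}-(n-1)\binom{n-1}{2}$ directly: since $\ker\mathcal{B}$ is an ideal it contains the $(n-1)\binom{n-1}{2}$ products $x_kR_{ij}$ with $k\in[n-1]$ and $1\le i<j\le n-1$, and these are linearly independent over $\Bbbk$ because the three-variable monomials $x_i^2x_j^2x_k$ (for $k\ne i,j$) and the $(4,1)$-type monomials $x_i^4x_k$ (covering $k\in\{i,j\}$) each appear in exactly one such product with nonzero coefficient. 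This matches the value computed above, confirming that $\ker\mathcal{B}[5]$ is exactly the degree-$5$ part of the ideal generated by the $R_{ij}$.
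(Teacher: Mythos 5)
Your proposal is correct and is essentially the paper's own proof: the paper likewise extracts the coefficient of $z^5$ from $h_{\mathcal{L}_{1,c}}(z)=(1+z)^{n-1}Q\left(z^2\right)$ using the known values $Q_0=1$, $Q_2=n-1$, $Q_4=n-1$, obtaining $\binom{n-1}{5}+(n-1)\binom{n-1}{3}+(n-1)^2$ and observing this equals $\binom{n+3}{5}-(n-1)\binom{n-1}{2}$. One small caveat on your (unnecessary) supplementary sanity check: the $(4,1)$-type monomials do not pin down the products uniquely (e.g.\ $x_ix_j^4$ occurs in the lowest $c$-grading of $x_iR_{jc}$ for every $c\ne j$), although linear independence of the $x_kR_{ij}$ does hold and can be seen instead from the monomials $x_i^2x_j^2x_k$ ($k\ne i,j$) and then $x_i^3x_j^2$ in the $c^{-1}$-grading.
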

\begin{proof}
Expand and look at the coefficient of $z^5$ in $h_{\mathcal{L}_{1,c}}(z)$ using the form from \cite{balagovic2013representations}. It turns out to be $\binom{n-1}{5}+(n-1)\binom{n-1}{3}+(n-1)^2$, which is equivalent to that expression.
\end{proof}
\begin{remark}
This means that the only polynomials in degree $\ker \mathcal{B}[5]$ are linear combinations of $x_{\ell}R_{ij}$ for $i,j,\ell<n$ and $i\ne j$.
\end{remark}

The key theorem we will now introduce allows us to answer the question of whether a specific polynomial is in $\ker \mathcal{B}$ for all odd $n$.
\begin{theorem}
\label{th3.9}
Let $f$ be a (homogeneous) polynomial in $k$ variables (for simplicity, say $x_1,x_2,\dots ,x_k$). Define $G=\deg f$ and $S$ to be the maximal exponent of any of the variables. Then $f\in\ker \mathcal{B}$ in any $\mathcal{M}_{1,c}(S_n,\mathfrak{h})$ iff $f\in\ker \mathcal{B}$ in any $\mathcal{M}_{1,c}(S_n,\mathfrak{h})$ for all $n \le S+k+G-2$.
\end{theorem}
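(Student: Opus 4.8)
The plan is to understand precisely which variables and which degrees can appear when we apply a Dunkl operator $D_{y_i - y_j} = \alpha_{ij} + c\beta_{ij}$ to the polynomial $f$, and then to argue that adding a "new" variable $x_{n+1}$ beyond the bound $n = S + k + G - 2$ cannot produce any genuinely new obstruction to membership in $\ker\mathcal{B}$. Recall from Definition \ref{df1.7} and the discussion after it that $f \in \ker\mathcal{B}$ is equivalent to $f$ being singular, i.e. $D_{y_i - y_j} f = 0$ for all $i,j$; so the task reduces to tracking the singularity condition as $n$ grows. The key observation I would exploit is that $f$ involves only $x_1, \dots, x_k$, so the operators $\alpha_{ij}$ and $\beta_{ij}$ behave in a highly structured way on $f$: differentiation lowers total degree by one and can only decrease the number of distinct variables, while the summands of $\beta_{ij}$ involving a "fresh" index $k' > k$ act through divided differences $\frac{1 - \sigma_{i k'}}{x_i - x_{k'}}$ that, when applied to a polynomial not containing $x_{k'}$, produce a polynomial whose dependence on $x_{k'}$ is completely determined by $f$ and the original variables.

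First I would set up the forward (easy) direction: if $f \in \ker\mathcal{B}$ in every $\mathcal{M}_{1,c}(S_n,\mathfrak{h})$, then in particular it holds for all small $n \le S + k + G - 2$, so that direction is vacuous. The content is the converse. So I would assume $f$ is singular for every $n \le S + k + G - 2$ and try to deduce singularity for arbitrary larger $n$. The main step is to show that the \emph{shape} of $D_{y_i - y_j} f$ — as a symbolic expression in the variables and in $c$ — stabilizes once $n$ is large enough. Concretely, I would argue that $D_{y_i - y_j} f$ is a sum of terms, each of which is supported on a bounded number of distinct variables: the differentiation part $\alpha_{ij} f$ touches only $x_1, \dots, x_k$, and each term of $\beta_{ij} f$ introduces at most one new index $x_{k'}$ at a time. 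Tracking the maximal degree $S$ and the degree $G$ shows that no monomial in $D_{y_i - y_j} f$ can involve more than $k + G - 2$ distinct \emph{variable slots} together with at most $S$ as the top exponent; this is exactly where the bound $S + k + G - 2$ comes from.

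The crucial lemma I would isolate is a \textbf{substitution/specialization principle}: if $D_{y_i - y_j} f$ vanishes as a polynomial whenever we use only $n \le S + k + G - 2$ variables, then because every individual monomial appearing in $D_{y_i - y_j} f$ is supported on at most $S + k + G - 2$ variable-positions, each such monomial's coefficient can already be detected (and forced to vanish) inside some instance with $n \le S + k + G - 2$ variables. In other words, a nonzero coefficient in the large-$n$ expression would, after renaming indices to a minimal supporting set, already have been visible — and hence nonzero — in one of the small-$n$ cases, contradicting singularity there. The main obstacle, which I expect to require the most care, is verifying that the combinatorial bound $S + k + G - 2$ is tight and correctly accounts for \emph{all} the index-interactions: in particular, the divided-difference terms of $\beta_{ij}$ can couple the $k$ original variables, the two distinguished indices $i,j$, and one fresh summation index $k'$ simultaneously, and I would need to check carefully that the worst-case number of distinct variables occurring in a single monomial of $D_{y_i - y_j} f$ is exactly $S + k + G - 2$ rather than off by one. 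Once that counting is pinned down, the specialization principle closes the argument: symmetry under $S_n$ (the $S_n$-invariance of $\mathcal{B}$ from Definition \ref{df1.4}) lets us relabel any bounded support set into the first few indices, reducing every large-$n$ obstruction to a small-$n$ one.
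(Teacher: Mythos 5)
Your reduction of membership in $\ker\mathcal{B}$ to singularity is a genuine error, and it undermines the whole approach. By the definition of the contravariant form, $f\in\ker\mathcal{B}$ means that \emph{every} composition $D_{y_{u_m}}\dotsm D_{y_{u_1}}f$ of Dunkl operators of total degree $\deg f$ annihilates $f$; this is strictly weaker than the single-application condition $D_{y_i-y_j}f=0$. Indeed, the theorem is applied in the paper (Proposition \ref{p3.11}) to polynomials such as $x_i^6$, which lie in $\ker\mathcal{B}$ but are not singular (one checks $D_{y_j}x_i^6=c\left(x_i^5+x_i^4x_j+\dots+x_j^5\right)\ne 0$), so an argument that only tracks the shape of $D_{y_i-y_j}f$ cannot prove the statement. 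The paper's proof accordingly works with arbitrary words in the Dunkl operators: its Lemma \ref{l3.10} is an induction on the length of such a word, and the reduction to words of length exactly $G-3$ uses the characteristic-$2$ fact that, after the substitution $x_n=x_1+\dots+x_{n-1}$, the degree-$3$ part of $\ker\mathcal{B}$ is zero, so membership in $\ker\mathcal{B}$ becomes literal vanishing after $G-3$ applications.

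Second, your ``specialization principle'' is asserted rather than proved, and it is exactly the hard part. The coefficient of a monomial in $D^{(n)}_{y_i}f$ genuinely depends on $n$: a term like $\sum_{j\ne i}\frac{1-\sigma_{ij}}{x_i-x_j}x_i^{\ell}$ contributes $(n-1)\,x_i^{\ell-1}$, with one contribution from \emph{every} index $j$, including indices outside the support of the monomial. These coefficients stabilize mod $p$ only because of the standing hypothesis $p\mid n-1$ (used in the paper as ``$n-k=1-k$ in $\Bbbk$''), which your proposal never invokes. Moreover, transferring vanishing from small $n$ up to large $n$ is precisely the direction obstructed by algebraic relations among symmetric functions in few variables: for instance $e_2^{(n)}e_4^{(n)}-\left(e_3^{(n)}\right)^2$ vanishes identically for $n=1$ but not for $n\ge 2$, so an expression can vanish at small $n$ for ``accidental'' reasons. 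The paper circumvents both problems at once by proving that the composed Dunkl action can be written as $\sum F_{s_1,\dots,s_r}(x_1,\dots,x_{k+r})\,e^{(n)}_{s_1}\dotsm e^{(n)}_{s_r}$ with coefficient polynomials $F$ \emph{independent of $n$} and power-sum indices $s_i<S$, and then invoking algebraic independence of those power sums once $n\ge S+k+G-2$: vanishing at one such large $n$ forces all the $F$'s to vanish, hence vanishing for every larger $n$, while the small $n$ are checked directly by hypothesis. Making your monomial-by-monomial transfer rigorous would require establishing this same $n$-independence, i.e.\ reconstructing the paper's key lemma; without it the proposal has no proof of its central claim. Relatedly, your counting misidentifies the source of the bound: it is (number of coefficient variables, $k$ plus one new variable per operator applied) plus (roughly $S$ fresh variables needed for the power sums to be algebraically independent), not a bound on the support of monomials of $D_{y_i-y_j}f$.
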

\begin{proof}
We work in the ring of polynomials in infinite number of variables $\Bbbk[x_1,x_2,\dots ]$ and consider the subring $\Bbbk[x_1,x_2,\dots ,x_n]$. Denote by $D_{y_i}^{(n)}$ the Dunkl operator associated with $\mathcal{H}_{1,c}(\mathfrak{h},S_n)$; i.e., $D_{y_i}^{(n)}=\partial_{x_i}-c\sum_{j\ne i,\hspace{2mm}j\le n}\frac{1-\sigma_{ij}}{x_i-x_j}$. We also denote by $e_s^{(n)}=\sum_{j=1}^{n}x_j^s$ and $e_0^{(n)} = 1$.

It is easy to see that if $i>k$ then $D_{y_i}^{(n)}f=\sigma_{i,k+1}D_{y_{k+1}}^{(k+1)}f$ hence for $i>k$, the value of $D_{y_i}f$ does not depend on $n$. But the result starts to depend on a new variable, namely $x_i$.

Now suppose $i\le k$ Computing, $$D_{y_i}^{(n)}f=\partial_{x_i}f-c\sum_{j\ne i,\hspace{2mm}j\le k}\frac{1-\sigma_{ij}}{x_i-x_j}f-c\sum_{k<j\le n}\frac{1-\sigma_{ij}}{x_i-x_j}f=F(x_1,\dots ,x_k)-c\sum_{k<j\le n}\frac{1-\sigma_{ij}}{x_i-x_j}f,$$ where $F(x_1,\dots ,x_k)$ is a polynomial which depends only on $x_1,x_2,\dots ,x_k$. Now write $f=\sum_{\ell}f_{\ell}x_i^{\ell}$, grading $f$ by degree in $x_i$. Crucially, each $f_{\ell}\in \Bbbk[x_1,x_2,\dots ,x_k]$. Then $$D_{y_i}^{(n)}f=F-c\sum_{k<j\le n}\sum_{\ell}f_{\ell}\left(x_i^{\ell -1}+x_i^{\ell -2}x_j+\dots +x_j^{\ell -1}\right)$$$$=F-c\sum_{\ell}f_{\ell}\left(x_i^{\ell-1}(n-k)+x_i^{\ell-2}\left(e_1^{(n)}-e_1^{(k)}\right)+x_i^{\ell-3}\left(e_2^{(n)}-e_2^{(k)}\right)+\dotsm +\left(e_{\ell -1}^{(n)}-e_{\ell -1}^{(k)}\right)\right).$$ Since $n-k=1-k$ in $\Bbbk$, that term does not depend on $n$, and thus we can write $$D_{y_i}^{(n)}f=\sum_{s=0}^{\deg_{x_i}(f)-1}F_s(x_1,\dots ,x_k)e_s^{(n)},$$ where $\deg_{x_i}(f)$ is the degree of $f$ as a polynomial in $x_i$ and each $F_s(x_1,\dots ,x_k)\in\Bbbk[x_1,x_2,\dots ,x_k]$ (recall that $c\in\Bbbk$).

Now let us examine the action of Dunkl operators $D_{y_j}$ on each term $F_s(x_1,\dots ,x_k)e_s^{(n)}$. Note that $$D_{y_j}^{(n)}\left(F_se_s^{(n)}\right)=D_{y_j}^{(n)}(F)e_s^{(n)}+F\partial_{x_i}\left(e_s^{(n)}\right).$$ If $j\le k$, then $D_{y_j}^{(n)}\left(F_se_s^{(n)}\right)=\sum_{s,t} F_{s,t}(x_1,\dots ,x_k)e_{s}^{(n)}e_{t}^{(n)}$. If $j>k$, then it depends on $x_j$ in an asymmetric way, and thus $D_{y_j}^{(n)}\left(F_se_s^{(n)}\right)=\sum_{s,t} F_{s,t}(x_1,x_2,\dots ,x_k,x_j)e_{s}^{(n)}e_{t}^{(n)}$.
So we can prove the following lemma by induction:
\begin{lemma}
\label{l3.10}
If $f\in k[x_1,\dots,x_k]$, then up to an action of $w \in S_{n-k}$(i.e. up to permuting the rest of the variables) $D_{y_{j_r}}\dots D_{y_{j_1}}f$ can be expressed as:
$$
w \circ D_{y_{j_r}}\dotsm D_{y_{j_1}}f = \sum_{s_j < S} F_{s_1,\dots,s_r}(x_1,\dots,x_{k+r}) e_{s_1}^{(n)}\dotsm e_{s_r}^{(n)} \ ,
$$
where $S= \max(\deg_{x_i}(f))$ and $\max(\deg_{x_i}F_{s_1,\dots,s_r}) \le \max(\deg_{x_i}(f))$.
\end{lemma}
\begin{proof}
Note that the polynomial in the lemma is not homogeneous in $e_s$, since $e_0 = 1$. 

We will prove by induction on $r$. For $r=1$ this follows from the previous discusssion. Suppose we know this for $r-1$. Consider $D_{y_{j_r}}\dotsm D_{y_{j_1}}f$. We know that $$w\circ D_{y_{j_{r-1}}}\dotsm D_{y_{j_1}}f= \sum_{s_j < S} F_{s_1,\dots,s_{r-1}}(x_1,\dots,x_{k+r-1}) e_{s_1}^{(n)}\dotsm e_{s_{r-1}}^{(n)}.$$ We can write:
\begin{align*}
w\circ D_{y_{j_r}} D_{y_{j_{r-1}}}\dotsm D_{y_{j_1}}f=&
D_{y_{w(j_r)}} \circ w \circ D_{y_{j_{r-1}}}\dotsm D_{y_{j_1}}f,\\
=&\sum_{s_j < S} D_{y_{w(j_r)}}[F_{s_1,\dots,s_{r-1}}(x_1,\dots,x_{k+r-1})] e_{s_1}^{(n)}\dotsm e_{s_{r-1}}^{(n)}\\
&+ \sum_{s_j < S} [F_{s_1,\dots,s_{r-1}}(x_1,\dots,x_{k+r-1})] \partial_{w(j_r)}[e_{s_1}^{(n)}\dotsm e_{s_{r-1}}^{(n)}] .\\
\end{align*}

We have two cases. First $w(j_r) > k+r-1$. Then by the discussion before the lemma, the first sum consists of polynomials in variables $x_1,\dots, x_{k+r-1},x_{w(j_r)}$ and the number of symmetric polynomials does not grow, the second sum consists of polynomials in the same number of variables, but the number of symmetric polynomials drops by one. So after acting by $\sigma_{k+r,w(j_r)}$ we obtain the formula we need. Since no new $e_s$ arise it follows that the bound by $S$ still holds in this case. Also since action of Dunkl operators does not increase the maximal degree in the single variable the second assertion also works.

The second case is $w(j_r) \le k+r-1$. In this case the first part of the sum does not depend on any new variables, but we get one new symmetric polynomial in the product. Its index is bounded by maximal degree of $F_{s_1,\dots,s_{r-1}}$ in single variable minus 1, so bounded by $S$. The second sum consists of polynomials depending on the same set of variables, but with one symmetric polynomial erased. So we again obtain the polynomial of the same form. Hence the Lemma holds.
\end{proof}

Now the statement that $f\in\ker \mathcal{B}$ will follow from the fact that by acting by any number of Dunkl operators $D_{y_a-y_b}^{(n)}$ on $f$, we obtain $0$. In particular, when $\Bbbk$ has characteristic $2$, $\ker \mathcal{B}[3]$ consists of only the $0$ polynomial after the substitution $x_n=x_1+\dots +x_{n-1}$. We know that the Dunkl operators have a basis $D_{y_1-y_u}^{(n)}$ for $u=2,3,\dots ,n$. Thus it suffices to check that all sequences $u_1,u_2,\dots ,u_{G-3}\in\{2,3,\dots ,n\}$ satisfy $D_{y_1-y_{u_{G-3}}}^{(n)}D_{y_1-y_{u_{G-4}}}^{(n)}\dotsm D_{y_1-y_{u_1}}^{(n)}f=0$ (after performing the substitution $x_n=x_1+\dots +x_{n-1}$). Using the lemma it follows that
$$w \circ D_{y_1-y_{u_{G-3}}}^{(n)}D_{y_1-y_{u_{G-4}}}^{(n)}\dotsm D_{y_1-y_{u_1}}^{(n)}f=\sum_{s_i\le S} F_{s_1,s_2,\dots ,s_{G-3}}(x_1,\dots ,x_k,x_{k+1},\dots ,x_{k+G-3})e_{s_1}^{(n)}\dotsm e_{s_{G-3}}^{(n)} .$$ 
Note that since we work over characteristic $2$ and we factored out $e_1$ it follows that we have only $e_{s_i}$ with $s_i$ - odd and not $1$.
We can rewrite this as 
$$\sum_{s_i\le S} \Tilde{F}_{s_1,\dots ,s_{G-3}}(x_1,\dots ,x_{k+G-3})\Tilde{e}_{s_1}^{(n)}\dotsm \Tilde{e}_{s_{G-3}}^{(n)},$$
where $\Tilde{e}_s^{(n)}=\sum_{t=k+G-2}^{n}x_t^n$ (remember that each $s_t<S$). Note that the $\Tilde{e}_{s_t}^{(n)}$ are algebraically independent  when $n-k-G+3\ge S-1$, and thus for $n\ge k+G+S-2$, if the value is $0$, then by algebraic independence all the $\Tilde{F}$ are zero, and thus the value is $0$ for all $n$ satisfying $n\ge k+G+S-3$. Hence it suffices to check all combinations of Dunkl operators for all $n$ satisfying $n< k+G+S-3$ and at least one of the values of $n$ for $n\ge k+g+S-3$; if it is zero on all of those cases, then $f\in\ker \mathcal{B}$ for all $n$.
\end{proof}
This theorem thus easily shows the following polynomials are in $\ker \mathcal{B}$.
\begin{prop}
\label{p3.11}
\begin{itemize}
    \item The polynomials $x_i^6\in\ker \mathcal{B}$.
    \item The polynomials $x_i^5x_j^2x_k^2\in\ker \mathcal{B}$.
    \item The polynomials $x_i^4x_j^4\in\ker \mathcal{B}$.
    \item The polynomials $x_i^3x_j^3x_k^3\in\ker \mathcal{B}$.
    \item The polynomials $x_i^2x_j^2x_k^2x_{\ell}^2\in\ker \mathcal{B}$.
\end{itemize}
\end{prop}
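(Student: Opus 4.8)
The plan is to obtain every item as an immediate consequence of Theorem~\ref{th3.9}. Each polynomial listed is a single monomial, so its three invariants can be read off directly: the number of variables $k$, the total degree $G$, and the largest exponent $S$. Theorem~\ref{th3.9} then asserts that membership in $\ker\mathcal{B}$ holds in $\mathcal{M}_{1,c}(S_n,\mathfrak{h})$ for \emph{all} admissible (odd) $n$ as soon as it holds for the finitely many $n$ with $n\le S+k+G-2$. Thus each of the five statements reduces from an infinite family of assertions to a finite computation, which is exactly the point of the sentence preceding the Proposition.

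Concretely, the bounds are as follows. For $x_i^6$ we have $k=1$, $G=6$, $S=6$, so it suffices to check $n\le 11$. For $x_i^5x_j^2x_k^2$ we have $k=3$, $G=9$, $S=5$, giving $n\le 15$. For $x_i^4x_j^4$ we have $k=2$, $G=8$, $S=4$, giving $n\le 12$. For $x_i^3x_j^3x_k^3$ we have $k=3$, $G=9$, $S=3$, giving $n\le 13$. Finally, for $x_i^2x_j^2x_k^2x_\ell^2$ we have $k=4$, $G=8$, $S=2$, giving $n\le 12$. In each case one only inspects those $n$ that are odd and large enough to accommodate the variables appearing together with the substitution variable $x_n$.

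For each such base value of $n$ the verification proceeds exactly as in the proof of Theorem~\ref{th3.9}: after the substitution $x_n=x_1+\dots+x_{n-1}$, a homogeneous $f$ of degree $G$ lies in $\ker\mathcal{B}$ if and only if every length-$(G-3)$ word $D_{y_1-y_{u_{G-3}}}^{(n)}\dotsm D_{y_1-y_{u_1}}^{(n)}f$ in the basis operators $D_{y_1-y_u}^{(n)}$, $u=2,\dots,n$, vanishes, since $\ker\mathcal{B}[3]=\{0\}$ in characteristic $2$ after the substitution (as used in the proof of Theorem~\ref{th3.9}, cf.\ the remark after Proposition~\ref{p3.4}). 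This is a finite, purely mechanical check, well suited to direct computer verification; invariance under $S_n$ and the fact that only the pencil $D_{y_1-y_u}$ need be applied cut down the number of words considerably.

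The main obstacle is purely the size of these base-case computations rather than any conceptual difficulty: the number of admissible Dunkl words grows quickly with $G$ and $n$, so the cases $x_i^5x_j^2x_k^2$ and $x_i^3x_j^3x_k^3$ (with $G=9$ and bounds $n\le 15$ and $n\le 13$) are the heaviest. One can lighten the load by tracking each intermediate result simultaneously by its degree grading and its grading in powers of $c$, exactly as in the proof of Proposition~\ref{p3.5}, and by using the already-established degree-$4$ singular polynomials $R_{ij}$ (Proposition~\ref{p3.5}) together with the description of $\ker\mathcal{B}[5]$ (the remark after Proposition~\ref{p3.8}) to terminate the descent early whenever an intermediate expression is recognizably a combination of the $x_\ell R_{ij}$.
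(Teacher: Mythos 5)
Your proposal is correct and is essentially identical to the paper's own (very terse) proof: the paper likewise disposes of all five families by invoking Theorem \ref{th3.9} to reduce each monomial to finitely many values of $n$ and then exhausting those base cases by computer, exactly as you describe, and your bounds $n\le 11,15,12,13,12$ from $S+k+G-2$ are computed correctly. The only difference is that you spell out the verification procedure (length-$(G-3)$ Dunkl words vanishing because $\ker\mathcal{B}[3]=\{0\}$ after the substitution) and suggest practical shortcuts, which the paper leaves implicit.
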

\begin{proof}
The proof is exhausting all cases using a computer, as outlined in Theorem~\ref{th3.9}.
\end{proof}
However, its use is not limited to showing that a polynomial is in $\ker \mathcal{B}$. The method of proof of the theorem can also show that a polynomial is not in $\ker \mathcal{B}$.
\begin{prop}
\label{p3.12}
The polynomial $x_1^5x_2\not\in\ker \mathcal{B}$.
\end{prop}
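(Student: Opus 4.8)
The plan is to run the argument of Theorem \ref{th3.9} in reverse: rather than verifying that every iterated Dunkl operator annihilates $f=x_1^5x_2$, I will exhibit a single composition that does \emph{not}. Set $G=\deg f=6$, $k=2$ (only $x_1,x_2$ appear), and $S=5$. Since $\ker\mathcal{B}$ is a graded submodule of $\mathcal{M}_{1,c}$, if $f\in\ker\mathcal{B}$ then $D_{y_1-y_{u_3}}^{(n)}D_{y_1-y_{u_2}}^{(n)}D_{y_1-y_{u_1}}^{(n)}f\in\ker\mathcal{B}[3]$ for every triple $(u_1,u_2,u_3)$. But in characteristic $2$, after the substitution $x_n=x_1+\dots+x_{n-1}$, one has $\ker\mathcal{B}[3]=\{0\}$ (this is the dimension count recorded in the remark after Proposition \ref{p3.4}, and it is exactly the fact invoked inside the proof of Theorem \ref{th3.9}). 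Hence it suffices to produce \emph{one} triple for which the degree-$3$ polynomial $D_{y_1-y_{u_3}}^{(n)}D_{y_1-y_{u_2}}^{(n)}D_{y_1-y_{u_1}}^{(n)}f$ is nonzero; this immediately forces $f\notin\ker\mathcal{B}$.

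To make a single finite computation conclusive, I would invoke Lemma \ref{l3.10}: up to a permutation $w\in S_{n-k}$ of the tail variables,
\[
w\circ D_{y_1-y_{u_3}}^{(n)}D_{y_1-y_{u_2}}^{(n)}D_{y_1-y_{u_1}}^{(n)}f=\sum_{s_1,s_2,s_3<S}\tilde F_{s_1s_2s_3}(x_1,\dots,x_5)\,\tilde e_{s_1}^{(n)}\tilde e_{s_2}^{(n)}\tilde e_{s_3}^{(n)}.
\]
Working over $\mathbb{F}_2$ and after discarding $e_1^{(n)}=0$, only odd indices $s$ with $3\le s<5$ survive (even power sums are Frobenius squares), so the tail contributes only through powers of $\tilde e_3^{(n)}$, and by comparing multidegrees in the tail variables these powers are linearly independent over $\mathbb{F}_2[c][x_1,\dots,x_5]$ once $n$ is large. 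Thus for large $n$ the expression vanishes if and only if every coefficient $\tilde F_{s_1s_2s_3}$ vanishes, and exhibiting a single nonzero $\tilde F$ finishes the argument. Since $c$ is transcendental, I may further grade by powers of $c$ and look for a nonzero contribution in any single $c$-degree.

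Concretely I would begin with the candidate $u_1=u_2=u_3=2$, computing degree by degree from the degree-$5$ intermediate
\[
D_{y_1-y_2}^{(n)}f=(x_1^5+x_1^4x_2)+c\bigl(x_1^5+x_1^4x_2+x_1^3x_2\,e'_1+x_1^2x_2\,e'_2+x_1x_2\,e'_3+x_2\,e'_4\bigr),
\]
where $e'_s=\sum_{k\ge 3}x_k^s$ and I have used that $n$ is odd, and then applying $D_{y_1-y_2}^{(n)}$ twice more and isolating the coefficient of a convenient monomial (for instance the tail-free, $\tilde e_3^{(n)}$-degree-zero part) in a fixed $c$-degree; if this triple should cancel, an adjacent triple involving a third index is used instead. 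As with Proposition \ref{p3.11}, the final step is best organized as a finite computation, carried out by hand for one large odd $n$ or by computer. The main obstacle is purely bookkeeping: the degree-$5$ and degree-$4$ intermediates mix the $\partial$-contributions $\alpha_{ij}$ and the divided-difference contributions $\beta_{ij}$ across three $c$-grades, so one must both choose operators giving a manifestly nonzero term and separate genuine (large-$n$) nonvanishing from coincidental cancellation at small $n$ — which is precisely what the linear independence of the tail power sums in the previous paragraph guarantees.
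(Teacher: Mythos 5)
Your strategy coincides with the paper's own proof: apply $(D_{y_1-y_2})^3$ to $x_1^5x_2$, use that $\ker\mathcal{B}$ is a graded submodule together with $\ker\mathcal{B}[3]=\{0\}$ (the remark after Proposition \ref{p3.4}), and invoke the $n$-independent form of iterated Dunkl actions from Lemma \ref{l3.10} to reduce to a finite computation; your formula for $D_{y_1-y_2}^{(n)}(x_1^5x_2)$ is also correct. However, there is a genuine gap in your final step. You propose to verify nonvanishing at \emph{one} large odd $n$, asserting that linear independence of the tail power sums is what rules out ``coincidental cancellation at small $n$.'' The logic runs only in the other direction: algebraic independence of the $\tilde{e}_s^{(n)}$ holds for $n$ at or above the threshold $n\ge S+k+G-2=11$, so a nonzero answer at one such $n$ forces the universal expression to be nonzero and hence forces nonvanishing for all $n\ge 11$. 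It says nothing about $n=3,5,7,9$. At those values the tail sums degenerate — for instance at $n=3$, after the substitution $x_n=x_1+\dots+x_{n-1}$ one has $\tilde{e}_3^{(n)}=(x_1+x_2)^3$, which collapses into $\Bbbk[x_1,x_2]$ — so the (nonzero) universal expression could still evaluate to zero there. Since the proposition is needed for every odd $n$ (it feeds Corollary \ref{c3.12.1} and ultimately Theorem \ref{th3.15}, which covers all $n\ge 5$ as well as $n=3$), the small cases cannot be discarded; this is exactly why the paper checks $n=3,5,7,9,11$ rather than a single large value.

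The repair is easy: either run the computation at every odd $n\le 11$, as the paper does, or prove a sharper independence threshold for this particular composition. For the latter, note that by degree reasons only $1$ and $\tilde{e}_3^{(n)}$ can occur in the output (the result has total degree $3$), and that after the substitution one computes $\tilde{e}_3^{(n)}=x_1^3+x_2^3+\sum_{i\ne j,\; i,j\le n-1}x_ix_j^2$ in characteristic $2$, which for $n\ge 4$ contains monomials outside $\Bbbk[x_1,x_2]$; this gives injectivity of the evaluation for all odd $n\ge 5$, leaving only $n=3$ to check by hand. Without one of these additions, your argument establishes the proposition only for $n\ge 11$.
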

\begin{proof}
We show that $D_{y_1-y_2}D_{y_1-y_2}D_{y_1-y_2}x_1^5x_2=c\left(x_1x_2^2+x_2^3\right)$. Adapting the proof of Theorem~\ref{th3.9}, for all $n$, $D_{y_1-y_2}D_{y_1-y_2}D_{y_1-y_2}x_1^5x_2$ will be a polynomial in $\Tilde{e}_0^{(n)},\Tilde{e}_1^{(n)},\Tilde{e}_3^{(n)}$, where $\Tilde{e}_s^{(n)}=\sum_{j=3}^{n}x_j^s$, with coefficients from $\Bbbk[x_1,x_2]$. By algebraic independence and by the fact that the Dunkl operators do not depend on $n$, if the result is the same up to $n=5+2+6-2=11$, then the coefficients are always the same for any $n$. Checking the action of $\left(D_{y_1-y_2}\right)^3\left(x_1^5x_2\right)$ for $n=3,5,7,9,11$ (with a computer, for example) shows that it always holds.
\end{proof}
\begin{corollary}
\label{c3.12.1}
We have $Q_6\ge 1$.
\end{corollary}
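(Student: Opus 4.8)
The plan is to read $Q_6$ directly off the Hilbert series. By Proposition~\ref{p1.6} we may write $h_{\mathcal{L}_{1,c}}(z)=(1+z)^{n-1}Q(z^2)$ with $Q$ having nonnegative integer coefficients. Extracting the coefficient of $z^6$ and using $[z^{6-2j}](1+z)^{n-1}=\binom{n-1}{6-2j}$ gives
\[
\dim\mathcal{L}_{1,c}[6]=\binom{n-1}{6}Q_0+\binom{n-1}{4}Q_2+\binom{n-1}{2}Q_4+Q_6.
\]
Substituting the values $Q_0=1$, $Q_2=n-1$, and $Q_4=n-1$ already obtained in Corollaries~\ref{c3.1.1},~\ref{c3.3.1}, and~\ref{c3.7.1} isolates
\[
Q_6=\dim\mathcal{L}_{1,c}[6]-\binom{n-1}{6}-(n-1)\binom{n-1}{4}-(n-1)\binom{n-1}{2}.
\]

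Since $Q$ has nonnegative integer coefficients we automatically have $Q_6\ge0$, so the entire content of the corollary is the strict inequality $Q_6\ne0$. This is exactly where Proposition~\ref{p3.12} is used: the class of $x_1^5x_2$ is a nonzero element of $\mathcal{L}_{1,c}[6]$, and the goal is to show that its presence forces $\dim\mathcal{L}_{1,c}[6]$ strictly above the value predicted by $Q_6=0$. I would phrase this as a contradiction: assuming $Q_6=0$, the degree-$6$ component of $\mathcal{L}_{1,c}$ would be accounted for entirely by the layers counted by $Q_0,Q_2,Q_4$, and I would derive from this that $x_1^5x_2$ must lie in $\ker\mathcal{B}$, contradicting Proposition~\ref{p3.12}.

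The main obstacle is making precise that the single polynomial $x_1^5x_2$ contributes a genuinely new dimension rather than one already absorbed into the $Q_0,Q_2,Q_4$ contributions. To control this I would combine Proposition~\ref{p3.12} with the explicit degree-$6$ kernel elements of Proposition~\ref{p3.11} (namely $x_i^6$, $x_i^5x_j^2x_k^2$, $x_i^4x_j^4$, $x_i^3x_j^3x_k^3$, and $x_i^2x_j^2x_k^2x_\ell^2$) together with the degree-$5$ description recorded after Proposition~\ref{p3.8} (only the $x_\ell R_{ij}$ survive): these determine enough of $\ker\mathcal{B}[6]$ that a monomial such as $x_1^5x_2$, which fits none of the kernel patterns above, must raise $\dim\mathcal{L}_{1,c}[6]$ by at least one beyond the $Q_6=0$ prediction. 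As $Q_6$ is a nonnegative integer, this single extra dimension yields $Q_6\ge1$.
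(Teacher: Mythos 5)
Your setup is the same as the paper's: extract the coefficient of $z^6$ from $h_{\mathcal{L}_{1,c}}(z)=(1+z)^{n-1}Q(z^2)$, substitute $Q_0=1$, $Q_2=Q_4=n-1$, and try to use Proposition~\ref{p3.12} to rule out $Q_6=0$; the paper's own proof is exactly this one-line appeal (with $\mathcal{L}_{0,c}$ an apparent typo for $\mathcal{L}_{1,c}$), so you have matched its route. The problem is that the step you defer --- ``assuming $Q_6=0$, I would derive that $x_1^5x_2$ must lie in $\ker\mathcal{B}$'' --- is the entire content of the corollary, and the ingredients you cite cannot produce it. The hypothesis $Q_6=0$ only fixes a number, namely $\dim\ker\mathcal{B}[6]=\binom{n+4}{6}-\binom{n-1}{6}-(n-1)\binom{n-1}{4}-(n-1)\binom{n-1}{2}$; it does not force any particular polynomial into $\ker\mathcal{B}[6]$, so no contradiction with Proposition~\ref{p3.12} follows from it formally. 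What is actually needed is the lower bound $\dim\mathcal{L}_{1,c}[6]\ge\binom{n-1}{6}+(n-1)\binom{n-1}{4}+(n-1)\binom{n-1}{2}+1$, i.e.\ one must exhibit that many monomials no nontrivial linear combination of which lies in $\ker\mathcal{B}$. That is a rank computation, and none of the cited results supplies it: Proposition~\ref{p3.11} produces elements \emph{inside} the kernel, which can only improve upper bounds on $\dim\mathcal{L}_{1,c}[6]$, never the required lower bound, and the degree-$5$ description after Proposition~\ref{p3.8} does not constrain which \emph{new} kernel elements can appear in degree $6$ beyond the ideal generated by the $R_{ij}$. Note also that the bound Proposition~\ref{p3.12} gives directly, $\dim\mathcal{L}_{1,c}[6]\ge1$, is vacuous here, since the $Q_0,Q_2,Q_4$ terms already exceed $1$.

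There is, moreover, a structural reason why $x_1^5x_2$ alone is not the right witness. Heuristically, the known coefficients already account for classes of the form (square-free monomial of degree $6-2j$) times (surviving Frobenius element of degree $2j$), and $x_1^5x_2=x_1^4\cdot(x_1x_2)$ has exactly the shape counted by the term $\binom{n-1}{2}Q_4$: its survival certifies $Q_4\ge1$, which is already known, rather than $Q_6\ge1$. A witness for $Q_6$ would have to be something like $x_1^4x_2^2$ or $x_1^2x_2^2x_3^2$ remaining nonzero in $\mathcal{L}_{1,c}$ --- and even then one cannot avoid the dimension count, because $\ker\mathcal{B}$ does not split along the Frobenius decomposition (indeed $R_{ij}\in\ker\mathcal{B}$ while its coefficients $x_k^2$ in the square-free expansion are not in $\ker\mathcal{B}$). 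So the gap is genuine: your proposal reproduces the paper's argument, including the unjustified linking step, rather than closing it; closing it requires showing directly that $\ker\mathcal{B}[6]$ has codimension at least one more than the $Q_6=0$ prediction, which is substantially more work than Proposition~\ref{p3.12}.
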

\begin{proof}
Check the coefficient of $z^6$ in the Hilbert series for $\mathcal{L}_{1,c}$ using the form from \cite{balagovic2013representations}, noting that $\dim \mathcal{L}_{0,c}[6]\ge 1$.
\end{proof}
\begin{corollary}
\label{c3.12.2}
We have $\dim \mathcal{L}_{1,c}[n+5]\ge 1$.
\end{corollary}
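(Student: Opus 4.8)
The plan is to read off $\dim \mathcal{L}_{1,c}[n+5]$ directly from the constrained form of the Hilbert series and to isolate a single nonnegative term that is guaranteed to be at least $1$ by Corollary \ref{c3.12.1}. By Proposition \ref{p1.6} (together with the $p=2$ specialization used throughout this subsection), we may write $h_{\mathcal{L}_{1,c}}(z) = (1+z)^{n-1} Q(z^2)$, where $Q(w) = \sum_{j \ge 0} Q_{2j}\, w^j$ has nonnegative integer coefficients; this is exactly the polynomial $h$ of that proposition, since at $p=2$ one has $(1-z^p)/(1-z) = 1+z$.

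First I would extract the coefficient of $z^{n+5}$. Expanding $(1+z)^{n-1} = \sum_{k=0}^{n-1}\binom{n-1}{k}z^k$ and collecting terms of the appropriate degree, one obtains
\[
\dim \mathcal{L}_{1,c}[n+5] = [z^{n+5}]\,(1+z)^{n-1}Q(z^2) = \sum_{j\ge 0} Q_{2j}\binom{n-1}{\,n+5-2j\,}.
\]
The binomial coefficient is nonzero precisely when $0 \le n+5-2j \le n-1$, that is, when $3 \le j \le (n+5)/2$ (recall $n$ is odd, so $n+5$ is even).

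Next I would isolate the term $j=3$, which contributes $Q_6\binom{n-1}{n-1} = Q_6$: this is the contribution obtained by pairing the top term $z^{n-1}$ of $(1+z)^{n-1}$ with the degree-$6$ part $Q_6 z^6$ of $Q(z^2)$, landing exactly at degree $n+5$. Every remaining summand has $j \ge 4$, so $n+5-2j < n-1$ and $\binom{n-1}{n+5-2j} \ge 0$, while $Q_{2j} \ge 0$ by nonnegativity of the coefficients of $Q$. Hence
\[
\dim \mathcal{L}_{1,c}[n+5] \ge Q_6\binom{n-1}{n-1} = Q_6 \ge 1,
\]
the last inequality being Corollary \ref{c3.12.1}.

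There is no genuine obstacle here: the substantive work has already been done in establishing $Q_6 \ge 1$ (Corollary \ref{c3.12.1}, via the explicit nonvanishing computation $\left(D_{y_1-y_2}\right)^3 x_1^5 x_2 = c(x_1x_2^2+x_2^3) \ne 0$ of Proposition \ref{p3.12}) and in the structural constraint on $h_{\mathcal{L}_{1,c}}$ coming from \cite{balagovic2013representations}. The only points worth checking carefully are the degree bookkeeping — namely that $z^{n+5}$ is precisely the degree at which the leading $z^{n-1}$ term of $(1+z)^{n-1}$ meets $Q_6 z^6$ — and the observation that no cancellation can occur, which is automatic since every summand in the coefficient extraction is a product of nonnegative quantities.
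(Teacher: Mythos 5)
Your proof is correct and is essentially the paper's own argument, made explicit: the paper's one-line proof (``expand the Hilbert series and check the coefficient of $z^6$'') is precisely your coefficient extraction $[z^{n+5}]\,(1+z)^{n-1}Q(z^2)\ge Q_6\binom{n-1}{n-1}=Q_6\ge 1$, relying on the nonnegativity of the coefficients from Proposition \ref{p1.6} and on Corollary \ref{c3.12.1}. You have simply supplied the degree bookkeeping that the paper leaves implicit.
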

\begin{proof}
Expand the Hilbert series for $\mathcal{L}_{1,c}$ and check the coefficient of $z^6$.
\end{proof}
\begin{prop}
\label{p3.13}
We have the equality $\dim \mathcal{L}_{1,c}[n+5]=1$.
\end{prop}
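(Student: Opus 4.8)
The plan is to combine the lower bound $\dim \mathcal{L}_{1,c}[n+5]\ge 1$ from Corollary \ref{c3.12.2} with a matching upper bound, so that all the work goes into showing $\dim \mathcal{L}_{1,c}[n+5]\le 1$. As in the earlier degree-by-degree analysis, I would exhibit a small spanning set of $\mathcal{M}_{1,c}[n+5]$ modulo $\ker\mathcal{B}$ and then collapse it. Since $\ker\mathcal{B}$ is an ideal, every monomial divisible by one of the kernel monomials $x_i^6$, $x_i^4x_j^4$, $x_i^2x_j^2x_k^2x_\ell^2$, $x_i^5x_j^2x_k^2$, $x_i^3x_j^3x_k^3$ of Proposition \ref{p3.11} is already zero in $\mathcal{L}_{1,c}$, so only the monomials avoiding all of these divisibilities can contribute.

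First I would determine which monomials survive this reduction. Working in the $n-1$ variables $x_1,\dots,x_{n-1}$, write $m$ for the number of variables with exponent $\ge 2$ and $s$ for the number with exponent exactly $1$; since the total degree is $n+5$ and $m+s\le n-1$, the ``large'' exponents must sum to at least $m+6$. Imposing the avoidance conditions from Proposition \ref{p3.11} (each exponent $\le 5$, at most one exponent $\ge 4$, at most two exponents $\ge 3$, at most three exponents $\ge 2$, and no exponent-$5$ variable together with two further exponent-$\ge 2$ variables) leaves exactly two profiles, $(5,3,1^{\,n-3})$ and $(4,3,2,1^{\,n-4})$, both using all $n-1$ variables. (The identical count in degree $n+6$ leaves no profile at all, which separately gives $\mathcal{L}_{1,c}[d]=0$ for $d\ge n+6$.) Hence $\mathcal{L}_{1,c}[n+5]$ is spanned by the monomials $A_{ab}=x_a^5x_b^3\prod_{k\ne a,b}x_k$ and $B_{abc}=x_a^4x_b^3x_c^2\prod_{k\ne a,b,c}x_k$.

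The remaining, and hardest, step is to show these $A_{ab}$ and $B_{abc}$ span a space of dimension at most $1$. For this I would use the singular polynomials $R_{ij}$ of Proposition \ref{p3.5}: since $cR_{ij}\in\ker\mathcal{B}$, multiplying the relation $cR_{ij}=0$ in $\mathcal{L}_{1,c}$ by suitable degree-$(n+1)$ monomials yields linear relations among the $A$'s and $B$'s, after discarding the product terms that acquire a sixth power or a pair of fourth powers (and so lie in $\ker\mathcal{B}$). For example, multiplying by $x_ix_j^3\prod_{k\ne i,j}x_k$ gives $A_{ij}+(1+c)A_{ji}+c\sum_{k\ne i,j}B_{kji}=0$, while multiplying by $x_i^2x_j^2\prod_{k\ne i,j}x_k$ gives $\sum_{k\ne i,j}(B_{kij}+B_{kji})=0$; combining these with the $i\leftrightarrow j$ symmetry already forces $A_{ij}=A_{ji}$. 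Throughout I would use that $c$ is transcendental over $\mathbb{F}_2$ to separate coefficients of distinct $c$-powers and to divide relations by powers of $c$. The main obstacle is organizational rather than conceptual: one must assemble enough such relations, over all index choices and multipliers, and verify that their span has corank exactly $1$ among the $A_{ab}$ and $B_{abc}$, so that every surviving monomial is a scalar multiple of one fixed element. Once this is established, $\dim\mathcal{L}_{1,c}[n+5]\le 1$, and together with Corollary \ref{c3.12.2} we conclude $\dim\mathcal{L}_{1,c}[n+5]=1$.
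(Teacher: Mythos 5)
Your skeleton is the paper's: the lower bound is taken from Corollary \ref{c3.12.2}, and the upper bound reduces degree $n+5$ modulo the monomials of Proposition \ref{p3.11} to the two surviving profiles $A_{ab}=x_a^5x_b^3\prod_{k\ne a,b}x_k$ and $B_{abc}=x_a^4x_b^3x_c^2\prod_{k\ne a,b,c}x_k$ (your classification here is correct). But the decisive step --- producing enough relations among the $A$'s and $B$'s to force corank $1$ --- is exactly what you defer as ``organizational,'' and it is the entire mathematical content of the proposition. The paper does not obtain these relations from the $R_{ij}$ at all: its key input is a \emph{new} degree-$8$ kernel element, $x_1^3x_2^3x_3^2+c\left(x_2^3x_3^5+x_1x_2^2x_3^5\right)\in\ker\mathcal{B}$, certified by the finite criterion of Theorem \ref{th3.9} (a computer check over finitely many $n$). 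Multiplying it by $x_1$ and discarding $x_1^2x_2^2x_3^5$ via Proposition \ref{p3.11} gives $x_1^4x_2^3x_3^2+cx_1x_2^3x_3^5\in\ker\mathcal{B}$; multiplying by $\prod_{k\ne a,b,c}x_k$ and permuting indices is what ties every $B_{abc}$ to $cA_{cb}$ and identifies the $A$'s sharing a first index. Nothing in your proposal supplies this input, and you give no reason to believe that degree-$(n+5)$ multiples of the $cR_{ij}$ together with Proposition \ref{p3.11} span enough of $\ker\mathcal{B}[n+5]$ on their own; ``assemble enough such relations and verify corank $1$'' is precisely the unproven claim.

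Moreover, the sample relations you do write are miscomputed, because you read the Remark's formula for $R_{ij}$ as though $\sum_{k\ne i,j}x_k^3$ ran over $k\le n-1$. Comparing with the statement of Proposition \ref{p3.5} (whose sums exclude $n$) shows the Remark's sum must include $k=n$; substituting $x_n=x_1+\cdots+x_{n-1}$ in characteristic $2$ gives $x_n^3=\sum_{l<n}x_l^3+\sum_{l\ne l'}x_l^2x_{l'}$, so the clean cubes $x_k^3$ with $k\ne i,j$, $k<n$ cancel, and $cR_{ij}$ actually carries $c(x_i+x_j)\left(x_i^3+x_j^3+\sum_{l\ne l';\,l,l'<n}x_l^2x_{l'}\right)$. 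Redoing your first multiplication with this form, the terms $c\sum_k B_{kji}$ never appear (they cancel); the correct reduction is $(1+c)(A_{ij}+A_{ji})+c\sum_{l\ne i,j}\bigl(B_{ijl}+B_{jil}+A_{jl}\bigr)+c\sum_{l\ne l';\,l,l'\ne i,j}B_{jll'}=0$, and your second relation likewise acquires $A$- and $B$-terms that you dropped. So both displayed relations are false as stated, and the inference $A_{ij}=A_{ji}$ drawn from them is unsupported. (The corrected $R_{ij}$-relations are genuinely useful as a supplement: once the Theorem \ref{th3.9} element yields $B_{abc}=cA_{cb}=B_{bac}$, hence a single class $\bar A_c$ for each first index $c$, they reduce to $(1+c)(\bar A_i+\bar A_j)+c^2\sum_{l\ne i,j}\bar A_l=0$, and subtracting two such forces $(1+c+c^2)(\bar A_j+\bar A_k)=0$, collapsing everything to one class since $c$ is transcendental; but without that element your argument has no route to $\dim\le 1$.)
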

\begin{proof}
From Theorem~\ref{th3.9}, it is easy to check that $x_1^3x_2^3x_3^2+c\left(x_2^3x_3^5+x_1x_2^2x_3^5\right)\in\ker \mathcal{B}$. Multiplying this polynomial by $x_1$ yields $x_1^4x_2^3x_3^2+c\left(x_1x_2^3x_3^5+x_1^2x_2^2x_3^5\right)\in\ker \mathcal{B}$. Noting that $x_1^2x_2^2x_3^5\in\ker \mathcal{B}$, we obtain that $x_1^4x_2^3x_3^2+cx_1x_2^3x_3^5\in\ker \mathcal{B}$. If either monomial were in $\ker\mathcal{B}$, then combined with Proposition~\ref{p3.11}, every monomial of degree $n+5$ would be contained in $\ker\mathcal{B}$, hence $\dim \mathcal{L}_{1,c}[n+5]\le 1$. But since $\dim \mathcal{L}_{1,c}[n+5]\ge 1$ by Corollary~\ref{c3.12.2}, equality is achieved.
\end{proof}
\begin{corollary}
\label{c3.13.1}
We have that $Q_6=1$.
\end{corollary}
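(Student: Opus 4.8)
The plan is to read $Q_6$ off the top graded piece that Proposition \ref{p3.13} has just pinned down, using the Balagović--Chen shape of the Hilbert series. Recall from Proposition \ref{p1.6} that $h_{\mathcal{L}_{1,c}}(z)=(1+z)^{n-1}Q(z^2)$, where $Q(w)=\sum_{j\ge 0}Q_{2j}w^j$ has \emph{nonnegative} integer coefficients. First I would extract the coefficient of $z^{n+5}$ on the right-hand side. Writing $Q(z^2)=\sum_{j}Q_{2j}z^{2j}$ and using $[z^m](1+z)^{n-1}=\binom{n-1}{m}$, one obtains
\[
\dim\mathcal{L}_{1,c}[n+5]=[z^{n+5}]h_{\mathcal{L}_{1,c}}(z)=\sum_{j\ge 0}\binom{n-1}{n+5-2j}Q_{2j}.
\]
The binomial $\binom{n-1}{n+5-2j}$ vanishes unless $0\le n+5-2j\le n-1$, i.e. unless $j\ge 3$; the smallest such index $j=3$ contributes $\binom{n-1}{n-1}Q_6=Q_6$, so every surviving summand has $j\ge 3$ and the $j=3$ term is exactly $Q_6$.

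Next I would invoke nonnegativity. Since every $Q_{2j}\ge 0$ and each binomial coefficient is nonnegative, the displayed sum is at least its $j=3$ term, so $\dim\mathcal{L}_{1,c}[n+5]\ge Q_6$. By Proposition \ref{p3.13} the left-hand side equals $1$, giving $Q_6\le 1$. Combining this with the lower bound $Q_6\ge 1$ of Corollary \ref{c3.12.1} yields $Q_6=1$, as claimed.

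The argument is essentially bookkeeping once Proposition \ref{p3.13} is in hand, so there is no substantive obstacle at this stage; the only points requiring care are the indexing of the binomial coefficients — checking that the terms $j=0,1,2$ (which would carry $Q_0,Q_2,Q_4$) genuinely drop out because $n+5-2j>n-1$ there — and recognizing that it is the \emph{nonnegativity} of the higher coefficients $Q_8,Q_{10},\dots$, rather than their a priori vanishing, that permits bounding the sum from below by $Q_6$ without first having to determine the degree of $Q$. As a by-product, the resulting equality forces all of those higher coefficients to vanish, consistent with the eventual closed form for the Hilbert series in Theorem \ref{th3.15}.
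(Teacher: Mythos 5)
Your proof is correct and is essentially the paper's own (implicit) argument: the paper states the corollary without proof precisely because, as you show, extracting the coefficient of $z^{n+5}$ from $(1+z)^{n-1}Q(z^2)$ isolates $Q_6$ with coefficient $\binom{n-1}{n-1}=1$, so Proposition \ref{p3.13} together with nonnegativity gives $Q_6\le 1$, and Corollary \ref{c3.12.1} gives $Q_6\ge 1$. Your bookkeeping of the vanishing binomials and the role of nonnegativity of $Q_8,Q_{10},\dots$ is exactly the intended reasoning.
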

We will denote by $(s_1,s_2,\dots )$, a monomial whose (nonzero) exponents (of its distinct variables) are $s_1,s_2,\dots $ for $s_1\ge s_2\ge \dots $ and all variables are $x_i$ for $i<n$ (we may simply substitute $x_n=x_1+\dots +x_{n-1}$).
\begin{prop}
\label{p3.14}
For $n\ge 5$, $\dim \mathcal{L}_{1,c}[n+7]=0$.
\end{prop}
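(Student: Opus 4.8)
The plan is to prove that $\ker\mathcal{B}[n+7]$ exhausts $\mathcal{M}_{1,c}[n+7]$, i.e.\ that every degree-$(n+7)$ monomial in $x_1,\dots,x_{n-1}$ (after the substitution $x_n=x_1+\dots+x_{n-1}$) already lies in $\ker\mathcal{B}$. Since $\ker\mathcal{B}$ is an ideal, and by Proposition \ref{p3.11} it contains each of the monomials $x_i^6$, $x_i^5x_j^2x_k^2$, $x_i^4x_j^4$, $x_i^3x_j^3x_k^3$, and $x_i^2x_j^2x_k^2x_\ell^2$ (with all indices $<n$; the hypothesis $n\ge 5$ is exactly what makes four distinct variables available for the last pattern), any monomial divisible by one of these five ``forbidden patterns'' is automatically in $\ker\mathcal{B}$. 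The whole statement therefore reduces to the purely combinatorial claim that \emph{every} monomial of degree $n+7$ is divisible by at least one of the five patterns.

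To establish that, first I would translate the five patterns into constraints on the (weakly decreasing) exponent vector $(a_1\ge a_2\ge\cdots\ge a_v)$, $v\le n-1$, of a monomial avoiding all of them: each $a_i\le 5$; at most one $a_i\ge 4$; at most two $a_i\ge 3$; at most three $a_i\ge 2$; and, crucially, if some $a_i=5$ then at most one other $a_j\ge 2$. Writing the degree as $v+\sum_i(a_i-1)$, only the (at most three) variables with $a_i\ge 2$ contribute ``excess.'' A short case analysis on $a_1$ bounds the total excess: if $a_1=5$, the last constraint forces $a_3\le 1$, giving excess $\le 4+2=6$; if $a_1=4$, the excess is $\le 3+2+1=6$; and if $a_1\le 3$, the excess is $\le 5$. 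Hence any monomial avoiding all five patterns has degree at most $(n-1)+6=n+5$, with extremal cases exactly $(5,3,1,\dots,1)$ and $(4,3,2,1,\dots,1)$. Since $n+7>n+5$, no degree-$(n+7)$ monomial can avoid all the patterns, so each is divisible by one of them and lies in $\ker\mathcal{B}$; thus $\dim\mathcal{L}_{1,c}[n+7]=0$.

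The main (if elementary) obstacle is the extremal bound, and in particular the role of $x_i^5x_j^2x_k^2$: without it the configuration $(5,3,2,1,\dots)$ would reach degree $n+6$, so the interaction of this pattern with ``at most three exponents $\ge 2$'' is precisely what pins the maximum at $n+5$. I would also note that the same argument yields $\dim\mathcal{L}_{1,c}[n+6]=0$; the reason $n+7$ is the degree worth recording is that, in the Hilbert-series form $h_{\mathcal{L}_{1,c}}(z)=(1+z)^{n-1}Q(z^2)$ of Proposition \ref{p1.6}, one has $[z^{n+7}]h_{\mathcal{L}_{1,c}}=Q_8+\binom{n-1}{2}Q_{10}+\binom{n-1}{4}Q_{12}+\cdots$, in which $Q_8$ occurs with coefficient $\binom{n-1}{n-1}=1$. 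Because Proposition \ref{p1.6} guarantees all $Q_{2j}\ge 0$, the vanishing of this coefficient forces $Q_{2j}=0$ for every $j\ge 4$, which together with the already-computed values $Q_0=1$, $Q_2=Q_4=n-1$, $Q_6=1$ completes the determination of the Hilbert series in Theorem \ref{th3.15}.
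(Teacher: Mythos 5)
Your proposal is correct and takes essentially the same route as the paper's own proof: both reduce the statement to showing that every degree-$(n+7)$ monomial in $x_1,\dots,x_{n-1}$ is divisible by one of the five monomials of Proposition \ref{p3.11} (using that $\ker\mathcal{B}$ is an ideal), and both establish this by a pigeonhole case analysis on the exponents. Your ``excess'' bookkeeping, which pins the maximal pattern-avoiding degree at $n+5$ with extremal exponent vectors $(5,3,1,\dots,1)$ and $(4,3,2,1,\dots,1)$, is simply a tidier packaging of the paper's case-check on the largest exponent.
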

\begin{proof}
We proceed by Pigeonhole principle on the exponents of any monomial and use Proposition~\ref{p3.11} to show that such a monomial is contained in $\ker\mathcal{B}$. If the highest degree in a single variable is at least $6$, then by $x_i^6\in\ker \mathcal{B}$, it is in $\ker \mathcal{B}$. If its highest degree in a single variable is $5$, then by the Pigeonhole principle it is either $(5,5,\dots )$ or $(5,4,\dots )$ or $(5,3,3,\dots )$ or $(5,3,2,\dots )$ or $(5,2,2,2,\dots )$, all of which can be formed via Proposition~\ref{p3.11}. If its highest degree in a single variable is $4$, then it is either $(4,4,\dots )$ or $(4,3,3,\dots )$ or $(4,3,2,2,\dots )$ or $(4,2,2,2,\dots )$. If its highest degree in a single variable is $3$, then it must have at least three other variables with exponent at least $2$. If its highest degree in a single variable is $2$, then there must be at least $4$ distinct variables with exponent $2$. Hence every possible monomial is contained in $\ker \mathcal{B}[n+7]$.
\end{proof}
\begin{corollary}
\label{c3.14.1}
We have that $Q_8=0$.
\end{corollary}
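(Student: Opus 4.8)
The plan is to extract $Q_8$ directly from the vanishing graded dimension established in Proposition \ref{p3.14}, exactly as the earlier corollaries read off $Q_2,Q_4,Q_6$ from a computed dimension. Recall from the discussion opening Section 3.1 (via Proposition \ref{p1.6}) that $h_{\mathcal{L}_{1,c}}(z)=(1+z)^{n-1}Q(z^2)$ with $Q(w)=\sum_{j\ge 0}Q_{2j}w^{j}$, and crucially that the coefficients $Q_{2j}$ are \emph{nonnegative} integers. First I would write $\dim\mathcal{L}_{1,c}[n+7]$ as the coefficient of $z^{n+7}$ in this product.

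Expanding $(1+z)^{n-1}=\sum_{a=0}^{n-1}\binom{n-1}{a}z^a$ and matching $a+2j=n+7$ gives
$$[z^{n+7}]\,h_{\mathcal{L}_{1,c}}(z)=\sum_{j\ge 4}Q_{2j}\binom{n-1}{\,n+7-2j\,},$$
where the constraint $0\le n+7-2j\le n-1$ forces $j\ge 4$ (out-of-range binomials being zero). The term $j=4$ contributes $Q_8\binom{n-1}{n-1}=Q_8$, while every term with $j\ge 5$ contributes the nonnegative quantity $Q_{2j}\binom{n-1}{n+7-2j}$.

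By Proposition \ref{p3.14} the left-hand side is $0$ (for $n\ge 5$), and by Proposition \ref{p1.6} each $Q_{2j}\ge 0$, so every summand on the right is a nonnegative integer. A sum of nonnegative integers that vanishes must have all summands equal to $0$; in particular the $j=4$ term forces $Q_8=0$, which is the claim (and in fact $Q_{2j}=0$ for all $j\ge 4$).

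The only real subtlety, and hence the step I would be most careful about, is that the coefficient of $z^{n+7}$ is \emph{not literally} $Q_8$: it also collects the tail $Q_{10},Q_{12},\dots$, so one cannot simply read $Q_8$ off as a top coefficient without first knowing a priori that $Q$ has degree $4$ in $w$. The nonnegativity of the coefficients of $Q$ supplied by Proposition \ref{p1.6} is precisely what rules out any cancellation in this tail and forces the whole tail, $Q_8$ included, to vanish simultaneously. For the finitely many small values $n<5$ excluded by the hypothesis of Proposition \ref{p3.14}, I would confirm $Q_8=0$ by the same expansion together with a direct check of $\dim\mathcal{L}_{1,c}[n+7]$.
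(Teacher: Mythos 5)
Your proof is correct and follows essentially the same route as the paper: the corollary is read off from Proposition \ref{p3.14} by expanding $h_{\mathcal{L}_{1,c}}(z)=(1+z)^{n-1}Q(z^2)$ at $z^{n+7}$, where the range constraint on the binomials makes the $j=4$ term equal to $Q_8$ and the nonnegativity of the $Q_{2j}$ from Proposition \ref{p1.6} forces every summand, hence $Q_8$, to vanish. Your explicit handling of the tail $Q_{10},Q_{12},\dots$ is a point the paper leaves implicit, but it is the same argument.
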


Putting these together, we find the Hilbert series for $\mathcal{L}_{1,c}$ for $n\ge 5$. For $n=1$, there is not much to say (and the formula does not apply), and for $n=3$, a quick Sage computation shows that the Hilbert series matches the same form.
\begin{theorem}
\label{th3.15}
The Hilbert series for $\mathcal{L}_{1,c}$ over a field with characteristic $2$ is $$h_{\mathcal{L}_{1,c}}(z)=\left(1+z^2\right)(1+z)^{n-1}\left(1+(n-2)z^2+z^4\right),$$ or alternatively, $$h_{\mathcal{L}_{1,c}}(z)=(1+z)^{n-1}\left(1+(n-1)z^2+(n-1)z^4+z^6\right).$$
\end{theorem}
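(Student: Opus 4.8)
The plan is to treat this statement as the bookkeeping consolidation of all the degree-by-degree work already done, so that essentially none of the genuine content lives here: the real effort is in Theorem~\ref{th3.9}, its applications in Proposition~\ref{p3.11}, and the individual dimension counts. First I would invoke the structural constraint from Balagovi\'c--Chen (Proposition~\ref{p1.6}), which in characteristic $2$ (where $\frac{1-z^p}{1-z}=1+z$) forces the Hilbert series into the shape $h_{\mathcal{L}_{1,c}}(z)=(1+z)^{n-1}Q(z^2)$ for a single-variable polynomial $Q$ with nonnegative integer coefficients. Writing $Q(w)=\sum_{k\ge 0}Q_{2k}w^k$, the whole theorem reduces to pinning down the coefficients $Q_{2k}$, and the factor $(1+z)^{n-1}$ comes along for free.

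Next I would read off the low coefficients directly from the corollaries collected above: $Q_0=1$ (Corollary~\ref{c3.1.1}), $Q_2=n-1$ (Corollary~\ref{c3.3.1}), $Q_4=n-1$ (Corollary~\ref{c3.7.1}), and $Q_6=1$ (Corollary~\ref{c3.13.1}). The crucial remaining step, which I expect to be the main obstacle to phrase cleanly, is to show that $Q$ has degree exactly $3$, i.e. that the tail $Q_{2k}$ for $k\ge 4$ vanishes identically rather than merely at one value. Since $Q_6=1\ne 0$ we already have $\deg Q\ge 3$. For the reverse bound I would combine Proposition~\ref{p3.14} ($\dim\mathcal{L}_{1,c}[n+7]=0$ for $n\ge 5$) with the fact that $\ker\mathcal{B}$ is an ideal, which propagates vanishing upward and yields $\dim\mathcal{L}_{1,c}[m]=0$ for all $m\ge n+7$. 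The leading term of $(1+z)^{n-1}Q(z^2)$ sits in degree $(n-1)+2\deg Q$ with nonzero coefficient (the product of the two leading coefficients, both positive), so this degree cannot be $\ge n+7$; hence $2\deg Q\le 7$, giving $\deg Q\le 3$. Thus $\deg Q=3$ and $Q(w)=1+(n-1)w+(n-1)w^2+w^3$.

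With $Q$ determined, substituting $w=z^2$ gives $h_{\mathcal{L}_{1,c}}(z)=(1+z)^{n-1}\bigl(1+(n-1)z^2+(n-1)z^4+z^6\bigr)$, which is the second stated form. The first stated form then follows from the elementary factorization
\[
1+(n-1)z^2+(n-1)z^4+z^6=(1+z^2)\bigl(1+(n-2)z^2+z^4\bigr),
\]
checked by expanding the right-hand side. Finally I would dispose of the small cases that fall outside the range $n\ge 5$ used above: the case $n=1$ is degenerate and the formula is not claimed there, while $n=3$ can be verified by a direct (e.g. Sage) computation to agree with the same expression, completing the argument for all odd $n$.
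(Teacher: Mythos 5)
Your proposal is correct and takes essentially the same route as the paper: invoke the Balagovi\'c--Chen form $(1+z)^{n-1}Q(z^2)$ from Proposition~\ref{p1.6}, read off $Q_0=1$, $Q_2=n-1$, $Q_4=n-1$, $Q_6=1$ from Corollaries~\ref{c3.1.1}, \ref{c3.3.1}, \ref{c3.7.1}, \ref{c3.13.1}, kill the tail via Proposition~\ref{p3.14}, and handle $n=1,3$ separately. The only difference is one of explicitness: the paper records the tail-vanishing merely as $Q_8=0$ (Corollary~\ref{c3.14.1}) and says ``putting these together,'' whereas you supply the ideal-propagation and leading-coefficient argument showing $\deg Q\le 3$ --- a step the paper leaves implicit but certainly intends.
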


\subsection*{Acknowledgements}
This project was done under the MIT PRIMES-USA program, which authors would like to thank for this opportunity. We would also like to thank Professor Pavel Etingof for suggesting this project. Finally, we would like to thank Professor Pavel Etingof and Professor Alexander Kirillov Jr. for many helpful discussions on the subject.

\bibliographystyle{alpha}
\bibliography{bibl}

\begin{thebibliography}{Che05}

\bibitem[BC13]{balagovic2013representations}
Martina Balagovi{\'c} and Harrison Chen.
\newblock Representations of rational {Cherednik} algebras in positive
  characteristic.
\newblock {\em Journal of pure and applied algebra}, 217(4):716--740, 2013.

\bibitem[CE03]{chmutova2003some}
Tatyana Chmutova and Pavel Etingof.
\newblock On some representations of the rational {Cherednik} algebra.
\newblock {\em Representation Theory of the American Mathematical Society},
  7(24):641--650, 2003.

\bibitem[Che92]{cherednik1992kz}
Ivan Cherednik.
\newblock Double affine {Hecke} algebras, {Knizhnik-Zamolodchikov} equations,
  and {Macdonald's} operators.
\newblock {\em International Mathematics Research Notices}, 1992(9):171--180,
  1992.

\bibitem[Che93]{cherednik1993macdonald}
Ivan Cherednik.
\newblock The {Macdonald} constant-term conjecture.
\newblock {\em International Mathematics Research Notices}, 1993(6):165--177,
  1993.

\bibitem[Che05]{cherednik2005double}
Ivan Cherednik.
\newblock {\em Double affine {Hecke} algebras}, volume 319.
\newblock Cambridge University Press, 2005.

\bibitem[DS14]{devadas2014representations}
Sheela Devadas and Steven~V Sam.
\newblock Representations of rational {Cherednik} algebras of {$ G (m, r, n) $}
  in positive characteristic.
\newblock {\em Journal of Commutative Algebra}, 6(4):525--559, 2014.

\bibitem[DS16]{devadas2016polynomial}
Sheela Devadas and Yi~Sun.
\newblock The polynomial representation of the type {$A_n$} rational
  {Cherednik} algebra in characteristic $p|n$.
\newblock 2016.

\bibitem[EM10]{etingof2010lecture}
Pavel Etingof and Xiaoguang Ma.
\newblock Lecture notes on {Cherednik} algebras.
\newblock {\em arXiv preprint arXiv:1001.0432}, 2010.

\bibitem[Eti07]{etingof2007calogero}
Pavel Etingof.
\newblock {\em {Calogero-Moser} systems and representation theory}, volume~4.
\newblock European Mathematical Society, 2007.

\bibitem[Gor03]{gordon2003baby}
Iain Gordon.
\newblock Baby {Verma} modules for rational {Cherednik} algebras.
\newblock {\em Bulletin of the London Mathematical Society}, 35(3):321--336,
  2003.

\end{thebibliography}
\end{document}